\newcommand{\Bgf}{{\sf B}}
\newcommand{\Cgf}{{\sf C}}
\newcommand{\Dgf}{{\sf D}}
\newcommand{\Ggf}{{\sf G}}
\newcommand{\Pgf}{{\sf P}}
\newcommand{\Qgf}{{\sf Q}}
\newcommand{\Qc}{{\sf Q^c}}
\newcommand{\bR}{\bar R}
\newcommand{\Rgf}{{\sf R}}
\newcommand{\Ugf}{{\sf U}}
\newcommand{\Cnn}{{\mathcal C}}
\newcommand{\Dnn}{{\mathcal D}}
\newcommand{\Pnn}{{\mathcal P}}
\DeclareMathOperator{\Arg}{Arg}
\DeclareMathOperator{\Cat}{Cat}
\DeclareMathOperator{\Tpol}{T}
\newcommand{\notem}[1]{\textcolor{blue}{#1}}
\newtheorem{Theorem}{Theorem}[section]
\newtheorem{Lemma}[Theorem]{Lemma}
\newtheorem{Proposition}[Theorem]{Proposition}
\newtheorem{Definition}[Theorem]{Definition}
\newtheorem{Corollary}[Theorem]{Corollary}
\newcommand{\beq}{\begin{equation}}
\newcommand{\eeq}{\end{equation}}
\def\emm#1,{{\em #1}}
\def\section{\@startsection{section}{1}%
 \z@{.7\linespacing\@plus\linespacing}{.5\linespacing}%
 {\normalfont\bfseries\scshape\centering}}
\def\subsection{\@startsection{subsection}{2}%
  \z@{.5\linespacing\@plus\linespacing}{.5\linespacing}%
  {\normalfont\bfseries\scshape}}
\def\subsubsection{\@startsection{subsubsection}{3}%
 \z@{.5\linespacing\@plus\linespacing}{-.5em}
 {\normalfont\bfseries}}
\newcommand{\zs}{\mathbb{Z}}
\newcommand{\qs}{\mathbb{Q}}
\newcommand{\fps}{formal power series}
\DeclareMathOperator{\cc}{c}
\DeclareMathOperator{\vv}{v}
\DeclareMathOperator{\ff}{f}
\DeclareMathOperator{\ee}{e}
\DeclareMathOperator{\od}{od}
\DeclareMathOperator{\dig}{dig}
\DeclareMathOperator{\qu}{qu}
\newcommand{\gf}{generating function}
\newcommand{\gfs}{generating functions}
\newcommand{\om}{\omega} 
\renewcommand{\epsilon}{\varepsilon}
\newcommand{\vareps}{\varepsilon}
\begin{document}
\title[The generating function  of planar Eulerian orientations]
{The  generating function   of planar Eulerian orientations}

\author[M. Bousquet-M\'elou]{Mireille Bousquet-M\'elou}

\author[A. Elvey Price]{Andrew Elvey Price}

\thanks{Both authors  were partially supported by the French ``Agence Nationale
de la Recherche'',  via grant  Graal ANR-14-CE25-0014. AEP was also
supported by ACEMS in the form of a top up scholarship and travel stipend, a 2017 Nicolas Baudin travel grant and an Australian government research training program scholarship.}

\address{MBM: CNRS, LaBRI, Universit\'e de Bordeaux, 351 cours de la
  Lib\'eration,  F-33405 Talence Cedex, France}

\address{
  AEP: {{School of mathematics and statistics}, University of Melbourne, Parkville, Victoria 3010, Australia}} 
\email{bousquet@labri.fr, andrewelveyprice@gmail.com}

\begin{abstract}The enumeration of planar maps equipped with an
  Eulerian orientation has attracted attention in both
  combinatorics and theoretical physics since at least 2000. The case
  of 4-valent maps is particularly interesting: these orientations are in
  bijection with properly 3-coloured quadrangulations, while in
  physics they correspond  to configurations   of the ice model.

We solve both problems -- namely the enumeration of  planar
Eulerian orientations and of 4-valent planar Eulerian orientations --
by expressing the associated generating functions as the
 inverses (for the composition of series) of simple hypergeometric
 series. Using these expressions, we derive the asymptotic behaviour of
 the number of planar Eulerian orientations, thus proving earlier
 predictions of Kostov, Zinn-Justin, Elvey Price and Guttmann. This
 behaviour,  $\mu^n /(n \log n)^2$, prevents the associated generating functions
 from being D-finite. Still, these generating functions are differentially algebraic, as they satisfy non-linear differential
 equations of order $2$. Differential algebraicity has recently been
 proved  for other map problems, in particular for maps equipped with
 a Potts model.

Our solutions mix recursive and bijective ingredients. In particular, a preliminary  bijection transforms our oriented maps into  maps carrying  a height function on their vertices.
In the 4-valent case, we also observe an unexpected connection with the
enumeration of maps equipped with a spanning tree that is \emm internally
inactive, in the sense of Tutte. This connection remains to be
explained combinatorially.
\end{abstract}

\keywords{planar maps, Eulerian orientations, height functions, differentially algebraic series}
\maketitle

\section{Introduction}
A \emm planar map, is a connected planar graph
embedded in the sphere, and taken up to orientation preserving
homeomorphism (see Figure~\ref{fig:defs}).  
The enumeration of planar maps is a venerable topic in combinatorics, which was born in
the early sixties with the pionneering work of William Tutte~\cite{tutte-triangulations,tutte-census-maps}. Fifteen
years later it started a second, independent, life in theoretical
physics, where planar maps are seen as a discrete model of \emm quantum
gravity,~\cite{BIPZ,BIZ}. The enumeration of maps also has connections with
factorizations of permutations, and hence representations of the
symmetric group~\cite{Jackson:Harer-Zagier,jackson-visentin}. Finally, 40 years after the first
enumerative results of Tutte, planar maps  crossed the border between
combinatorics and probability theory, where they are now studied as
 random metric spaces~\cite{angel-schramm,chassaing-schaeffer,le-gall-topological,marckert-mokkadem}. The limit behaviour of large
planar random maps is now well understood, and gave birth to a
variety of 
 limiting objects, either
continuous like the Brownian map~\cite{curien-legall-plane,legall,legall-miermont-large-faces,miermont}, or
 discrete  like the UIPQ (uniform infinite planar quadrangulation)~\cite{angel-schramm,chassaing-durhuus,curien-miermont,menard-same}.

The enumeration of maps equipped with some additional structure (a
spanning tree, a proper colouring, a self-avoiding-walk, a configuration of the Ising
model...) {has} attracted the {interest} of both combinatorialists and theoretical
physicists {since} the early days of this
study~\cite{DK88,Ka86,mullin-boisees,lambda12,tutte-dichromatic-sums}. {At
  the moment}, {a challenge}
is to understand the {limiting} behaviour of maps equipped with one such
structure~\cite{borot-bouttier-duplantier,kassel-wilson,kenyon2015bipolar,richier-perco,sheffield-inventory}.

\begin{figure}[ht]
  \centering
  \includegraphics[width=12cm]{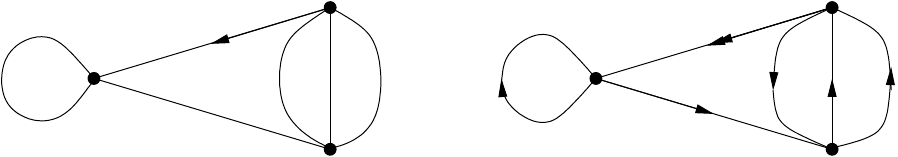}
  \caption{Left: a rooted planar map, which is 4-valent (or:
    quartic). Right: the same map, equipped with an Eulerian
    orientation.}
  \label{fig:defs}
\end{figure}

The enumeration of {these} ``decorated'' maps, and  understanding  their
structure, remain the very first building blocks towards the
resolution of such
challenges. Recently, the natural question of counting maps equipped
with an \emm Eulerian orientation, (where all edges are oriented in such a way
that every vextex has as many incoming as outgoing edges, see Figure~\ref{fig:defs}) was raised by
Bonichon \emph{et al.}~\cite{BoBoDoPe}. They did not solve the problem, but gave
sequences of lower bounds and upper bounds on the number of 
 planar Eulerian orientations. They were followed by
Elvey Price and Guttmann who, remarkably, were able to write an
intricate system of functional equations defining the associated
\gf~\cite{elvey-guttmann17}. This allowed them to compute the number $g_n$ of Eulerian orientations
with $n$ edges for large values of $n$, and led them to a conjecture
on the asymptotic behaviour  of $g_n$.

Their study {also} included the special case of 4-valent (or:
\emm quartic,) Eulerian orientations. This problem  had already been
studied around 2000 in theoretical
physics, where it coincides with the \emm ice model on a random
lattice,~\cite{kostov,zinn-justin-6V-random}.  Another fact that makes this case
particularly relevant is that the number of such orientations with $n$
vertices is known to be the number of 3-coloured quadrangulations with
$n$ faces~\cite{welsh-book}. Elvey Price and Guttmann constructed a system
of functional equations for this problem {as well}, and conjectured the
asymptotic behaviour of the associated numbers $q_n$. Their prediction
had already been in the physics
papers~\cite{kostov,zinn-justin-6V-random} for a while, but was probably less accessible
to combinatorialists. The more experienced  of us
observed that the conjectured  growth rate, $4\sqrt 3 \pi$,  already
occurred when counting quartic maps equipped with a certain type of tree~\cite{mbm-courtiel}, and the
more optimistic of us looked for, and discovered, an exact (though conjectural)
 relation between the two problems. This gave an (unpublished) conjecture for the
 \gf\ of quartic Eulerian orientations, soon completed by a similar conjecture
 for general Eulerian orientations. These are the conjectures that we
 prove in this paper, thus {completely solving} these two enumeration
 problems. 

 Let us now state our main two theorems.   As is usual with maps, our orientations are \emm rooted,,
 which means that we mark one (oriented) edge (Figure~\ref{fig:defs},
 {right}). 
 Orientations of small size are shown in
 Figure~\ref{fig:small}.

\begin{figure}[ht]
  \centering
  \includegraphics[scale=0.7]{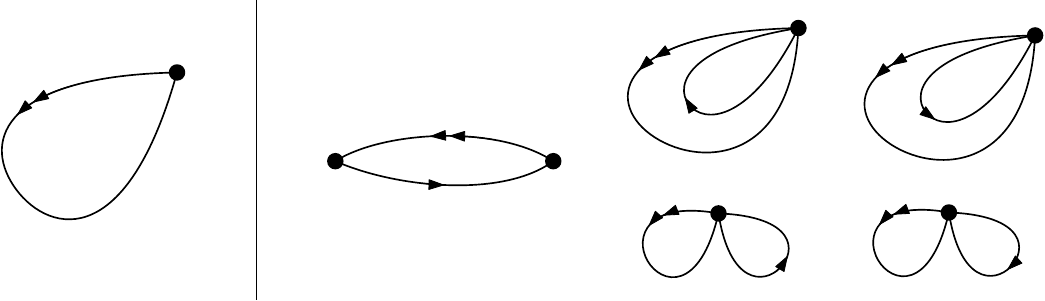}
  \caption{{The planar Eulerian orientations with at most
      two edges, in agreement with 
      $\Ggf(t)=t+5t^2+O(t^3)$. On the right are the four quartic Eulerian
      orientations with one vertex, in agreement with $\Qgf(t)=4t+O(t^2)$.}}
  \label{fig:small}
\end{figure}

\begin{Theorem}\label{thm:4}
  Let $\Rgf(t)\equiv \Rgf$ be the unique formal power series with constant
  term $0$ satisfying
\[
t= \sum_{n \ge 0} \frac{1}{n+1} {2n \choose n}{3n \choose n} \Rgf^{n+1}.
\]
Then the \gf\ of  quartic rooted planar Eulerian orientations, counted by
vertices, is
\[
\Qgf(t)= \frac 1{3t^2}\left( t-3t^2-\Rgf(t)\right).
\]
This is a differentially algebraic series, satisfying a non-linear
differential equation of order $2$ whose coefficients are polynomials in $t$. The number $q_n$ of such
orientations having $n$ vertices behaves asymptotically as 
\[
q_n \sim \kappa\, \frac{\mu^{n+2} }{n^2 (\log n)^2}
\]
where
\[ \kappa =1/18  \qquad \hbox{and } \qquad \mu=  4\sqrt 3\, \pi.
\]
The series $\Qgf(t)$ is not D-finite, which means that it does not
satisfy any non-trivial \emm linear, differential equation.
\end{Theorem}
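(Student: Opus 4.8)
The plan is to treat the closed form for $\Qgf$ as the combinatorial core and to read off every analytic consequence from the description of $\Rgf$ as the compositional inverse of an explicit hypergeometric series. First I would record that $\Rgf$ is well defined: writing $\Psi(u):=\sum_{n\ge0}\frac1{n+1}\binom{2n}{n}\binom{3n}{n}u^{n+1}$, the defining relation is $t=\Psi(\Rgf)$, and since $\Psi(u)=u+O(u^2)$, Lagrange inversion produces a unique series $\Rgf$ with zero constant term. The genuinely hard input is the identity $\Qgf=(t-3t^2-\Rgf)/(3t^2)$ itself; I expect this to be the main obstacle and the heart of the matter. I would obtain it by setting up a functional equation for a suitably refined \gf\ of quartic Eulerian orientations carrying one or more catalytic variables (recording, e.g., a boundary statistic of the root face), solving it by the kernel method together with the recursive/bijective structure of the objects, and recognising the outcome as the announced hypergeometric inverse. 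Everything below is then a formal consequence of this closed form.

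For differential algebraicity, the key observation is that $\Psi$ is $D$-finite of order $2$: from the ratio of consecutive coefficients one checks that $\Psi(u)=u\cdot{}_2F_1(1/3,2/3;2;27u)$, so $\Psi$ satisfies a second-order linear ODE $A(u)\Psi''+B(u)\Psi'+C(u)\Psi=0$ with polynomial coefficients. Substituting $u=\Rgf(t)$ and using $\Psi(\Rgf)=t$, the relations $\Psi'(\Rgf)=1/\Rgf'$ and $\Psi''(\Rgf)=-\Rgf''/(\Rgf')^3$ turn this into $-A(\Rgf)\Rgf''+B(\Rgf)(\Rgf')^2+C(\Rgf)\,t\,(\Rgf')^3=0$, a nonlinear ODE of order $2$ with coefficients polynomial in $t$ and $\Rgf$. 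Since $\Rgf=t-3t^2-3t^2\Qgf$ is a polynomial in $t$ and $\Qgf$, substituting this expression and its derivatives yields a polynomial relation $P(t,\Qgf,\Qgf',\Qgf'')=0$, which is the asserted order-$2$ algebraic differential equation.

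For the asymptotics I would locate the dominant singularity of $\Qgf$, which is that of $\Rgf$. The series $\Psi$ is analytic and increasing on $[0,1/27)$, so $\Rgf$ maps a neighbourhood of $0$ increasingly onto $[0,t_c)$ with $t_c=\Psi(1/27)$; by Gauss's summation ${}_2F_1(1/3,2/3;2;1)=\Gamma(2)\Gamma(1)/(\Gamma(5/3)\Gamma(4/3))=9\sqrt3/(4\pi)$, whence $t_c=1/(4\sqrt3\,\pi)=1/\mu$, giving the growth rate. The subexponential factor comes from the singularity of $\Psi$ at $1/27$: since $\Psi'(u)={}_2F_1(1/3,2/3;1;27u)$ has parameters with $c-a-b=0$, it is logarithmic, $\Psi'(u)\sim-\tfrac{\sqrt3}{2\pi}\log(1-27u)$, and integrating gives $t_c-\Psi(u)\sim\tfrac{\sqrt3}{54\pi}\,\epsilon\log(1/\epsilon)$ with $\epsilon=1-27u$. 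Inverting this relation (bootstrapping the logarithm) yields $\epsilon$, hence $\Rgf$ and $\Qgf$, a singular expansion of the form $\Qgf(t)=(\text{analytic})+c_1\,(t_c-t)\bigl(\log\tfrac1{t_c-t}\bigr)^{-1}(1+o(1))$. Singularity analysis of a term $(t_c-t)^1(\log\tfrac1{t_c-t})^{-1}$, where the positive integer exponent on $(t_c-t)$ bumps the logarithmic exponent so that $[t^n]$ is of order $t_c^{-n}/(n^2(\log n)^2)$ rather than $(\log n)^{-1}$, produces the stated shape $q_n\sim\kappa\,\mu^{n+2}/(n^2(\log n)^2)$. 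I expect the only delicate point here to be the precise bookkeeping of constants yielding $\kappa=1/18$; the transfer theorems of Flajolet and Odlyzko for logarithmic singularities supply it.

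Finally, non-$D$-finiteness is immediate from the asymptotics. The coefficients of a $D$-finite series admit an asymptotic expansion over terms $\mu^n n^{\alpha}(\log n)^{j}$ in which $j$ is a \emph{non-negative} integer, because near a regular singular point the local solutions are combinations of $(1-t/t_c)^{\alpha}\bigl(\log(1-t/t_c)\bigr)^{j}$ with $j\ge0$. The factor $(\log n)^{-2}$ in $q_n$ has a negative exponent and so cannot arise; hence $\Qgf$ is not $D$-finite.
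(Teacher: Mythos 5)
The parts of your argument that lie downstream of the closed form $\Qgf=(t-3t^2-\Rgf)/(3t^2)$ are correct and essentially identical to what the paper does: the identification $\Psi(u)=u\,{}_2F_1(1/3,2/3;2;27u)$ and the second-order linear ODE for $\Psi$, transported through the inverse function to give $\Rgf(27\Rgf-1)\Rgf''=6t(\Rgf')^3$ and hence an order-$2$ nonlinear ODE for $\Qgf$; the location of the dominant singularity at $\rho=\Psi(1/27)=\sqrt 3/(12\pi)=1/\mu$ via Gauss's summation; the logarithmic singular expansion of $\Psi$ at $1/27$ and its inversion, giving $\Rgf-\tfrac1{27}\sim\tfrac16(1-t/\rho)/\log(1-t/\rho)$; singularity analysis yielding $\kappa=1/18$; and non-D-finiteness from the negative power of $\log n$ in the coefficient asymptotics. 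All of this matches Section 7 of the paper (which delegates the singular inversion to an earlier reference but describes exactly this mechanism).

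However, the theorem is not proved, because the identity $\Qgf=(t-3t^2-\Rgf)/(3t^2)$ --- which you yourself identify as the heart of the matter --- is only announced as a plan (``set up a functional equation with catalytic variables, solve it by the kernel method''). That identity carries essentially all of the content of the statement, and the plan as described would not obviously succeed. The functional equations the paper actually establishes (Theorem 3) are not of the type the kernel method handles: they involve non-negative-part extractions $[x^{\ge0}]$ of products such as $\tfrac1x\Dgf(t,x,y)\Pgf(t,t/x)$, the substitution $x\mapsto 1/(1-x)$, and an initial condition coupling $[y^1]\Dgf$ and $[y^2]\Dgf$. The paper does not solve this system by elimination or by a kernel cancellation; it first proves by induction on a bidegree that the system has a unique solution in the prescribed rings, and then \emph{guesses} explicit exponential-of-hypergeometric expressions for $\Cgf$ and $\Dgf$ in terms of $\Rgf$ and verifies that they satisfy the system, using Lagrange inversion and the identity $\exp(A(u,z))\left(1-zu\Cat(u)\right)=1$. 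Without either reproducing such a verification or supplying a genuinely different derivation of the closed form, your proposal establishes only the implication ``closed form $\Rightarrow$ D-algebraicity, asymptotics, non-D-finiteness,'' not the theorem itself.
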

The first coefficients of $\Rgf$ and $\Qgf$ are
$$
\Rgf(t)= t-3t^2-12t^3-105t^4 -1206 t^5- \cdots, \qquad 
\Qgf(t)=4t+35t^2+402 t^3+ \cdots.
$$

\medskip
\noindent{\bf Remarks}\\
{\bf 1.} As we will explain in Section~\ref{sec:def-orientations},
  the series $\Qgf(t)$  also counts (by faces)
quadrangulations  equipped with a {proper} 3-colouring
of the vertices (with prescribed colours on  the root edge).
It is worth noting that the \gfs\ of 3-coloured triangulations, and of
3-coloured planar maps, are both algebraic~\cite{bernardi-mbm-alg} (and thus D-finite), hence in a sense
{they are} much simpler.  The corresponding asymptotic estimates are 
$\kappa \mu ^n n^{-5/2}$ in both cases (for other values of $\mu$ {and~$\kappa$} of course).\\
{\bf 2.} In Section~\ref{sec:bij}, {we will prove} that $\Qgf(t)$ also
counts, by edges,  {Eulerian \emm partial,} orientations of planar maps:
that is, only \emm some, edges are oriented, with the condition that at any
vertex there are as many incoming as outgoing edges.\\
{\bf 3.} As  mentioned above, the series $\Rgf(t)$ already occurs in
the map literature, and more precisely in
the enumeration of quartic maps $M$ weighted by their Tutte polynomial $\Tpol_M(
0,1)$. However, our proof does not rely on this observation, and it
remains an open problem to understand this connection combinatorially. We refer
to the final section for more details.

\medskip
The counterpart of Theorem~\ref{thm:4} for all {rooted} planar Eulerian orientations
reads as follows.

\begin{Theorem}\label{thm:gen}
  Let $\Rgf(t)\equiv \Rgf$ be the unique formal power series with constant
  term $0$ satisfying
\[
t= \sum_{n \ge 0} \frac 1 {n+1} {2n \choose n}^2\Rgf^{n+1}.
\]
Then the \gf\ of  {rooted} planar Eulerian orientations, counted by
edges, is
\[
\Ggf(t)= \frac 1{4t^2}\left( t-2t^2-\Rgf(t)\right).
\]
This is a differentially algebraic series, satisfying a non-linear
differential equation of order $2$ whose coefficients are polynomials in $t$. The number $g_n$ of such
orientations having $n$ vertices behaves asymptotically as 
\[
g_n \sim \kappa \, \frac{\mu^{n+2} }{n^2 (\log n)^2}
\]
where
\[ \kappa =1/16  \qquad \hbox{and } \qquad \mu= 4 \pi .
\]
The series $\Ggf(t)$ is not D-finite.
\end{Theorem}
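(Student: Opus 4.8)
The plan is to mirror the proof of Theorem~\ref{thm:4}, adapting each ingredient from the quartic to the general case. The combinatorial core, which I expect to be the main obstacle, is to obtain a \emph{solvable} functional equation for a refinement $\Ggf(t;x)$ of the \gf, in which a single catalytic variable $x$ records a boundary statistic of the rooted orientation --- most naturally the degree of the root face, or the signed flow accumulated along its boundary. A recursive decomposition (peeling the root edge and summing over the Eulerian-compatible orientations at the two incident vertices) should yield an equation of the shape
\[
K(t;x)\,\Ggf(t;x)=P\big(t,x,\Ggf(t;1),\dots\big),
\]
with a catalytic kernel $K$ and a polynomial right-hand side. The delicate point is to reorganise the intricate system of Elvey Price and Guttmann into such a clean kernel-type equation; the bijective ingredient announced in the abstract presumably enters here, accounting for the pair-of-paths factor $\binom{2n}{n}^2$.

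Solving the equation is the next step. Cancelling $K(t;x)$ by substituting a well-chosen algebraic series for $x$ (the kernel method) should express $\Ggf(t;1)$ in closed form; the compositional inverse appearing in the statement is the signal that, after this substitution, $t$ is best parametrised by the single series $\Rgf$. The pivot is to recognise
\[
\frac{dt}{d\Rgf}=\sum_{n\ge 0}\binom{2n}{n}^2\Rgf^{\,n}={}_2F_1\!\left(\tfrac12,\tfrac12;1;16\Rgf\right)=:P(\Rgf).
\]
Rewriting the solution in the variable $\Rgf$ and back-substituting should produce $\Ggf=\tfrac1{4t^2}(t-2t^2-\Rgf)$; the expansion $\Ggf=t+5t^2+\cdots$ of Figure~\ref{fig:small}, together with the inverted series $\Rgf=t-2t^2-4t^3-\cdots$, provides a convenient check.

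For differential algebraicity I would exploit that $P$ is D-finite, solving the hypergeometric equation, which in the variable $\Rgf$ and in self-adjoint form reads
\[
\frac{d}{d\Rgf}\Big[\Rgf(1-16\Rgf)\,P'\Big]=4P,
\]
where $P'=dP/d\Rgf$. Integrating once from $\Rgf=0$ and using $t=\int_0^{\Rgf}P\,d\Rgf$ gives the first integral $\Rgf(1-16\Rgf)\,P'=4t$. This is exactly what lowers the order to $2$: substituting $P=1/\Rgf'$ and $P'=-\Rgf''/(\Rgf')^3$ (primes on $\Rgf$ now denoting $d/dt$) yields
\[
\Rgf(1-16\Rgf)\,\Rgf''+4t\,(\Rgf')^3=0,
\]
an order-$2$ algebraic differential equation whose coefficients become polynomials in $t$ once $\Rgf=t-2t^2-4t^2\Ggf$ is inserted. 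Substituting the linear ODE directly, without integrating first, would only give order $3$, so this integration is the crux of the order-$2$ claim.

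Finally, the asymptotics follow from singularity analysis. Since $P(\Rgf)\sim-\tfrac1\pi\log(1-16\Rgf)$ as $\Rgf\to\tfrac1{16}$, the integral $t=\int_0^{\Rgf}P$ converges there, so $\Rgf$ has its dominant singularity at
\[
t_c=\frac1{16}\int_0^1{}_2F_1\!\left(\tfrac12,\tfrac12;1;z\right)dz=\frac1{16}\cdot\frac4\pi=\frac1{4\pi},
\]
using $\int_0^1 kK(k)\,dk=1$, whence $\mu=1/t_c=4\pi$. The local relation $t_c-t\sim\tfrac1\pi(\tfrac1{16}-\Rgf)\log\frac{1}{\,\tfrac1{16}-\Rgf\,}$ is a logarithmic singularity; inverting it and applying a transfer theorem tailored to this type of singularity gives the asymptotics of $[t^n]\Rgf$, and then $[t^{n+2}]\Rgf=-4g_n$ yields $g_n\sim\kappa\,\mu^{n+2}/(n^2(\log n)^2)$ with $\kappa=\tfrac1{16}$; pinning down the constant is the technical heart of this step. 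Non-D-finiteness is then immediate: the factor $(\log n)^{-2}$, a \emph{negative} power of $\log n$, cannot occur in the coefficient asymptotics of a D-finite series, whose dominant singularities are regular and produce only non-negative integer powers of logarithms, so $\Ggf$ is not D-finite.
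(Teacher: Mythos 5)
The combinatorial core of your argument --- the step you yourself flag as ``the main obstacle'' --- is not actually supplied, and the route you sketch for it would not work. You posit a refinement $\Ggf(t;x)$ with a \emph{single} catalytic variable and a kernel-type equation $K(t;x)\,\Ggf(t;x)=P(t,x,\Ggf(t;1),\dots)$ obtained by peeling the root edge. No such equation is known for this problem, and the paper does not proceed this way at all. The actual proof has three ingredients, none of which appears in your proposal: (a) the mobile bijection of Section~\ref{sec:bij}, which shows that $2\Ggf(t)=\Qc(t)$, the \gf\ of \emph{colourful} labelled quadrangulations (Corollary~\ref{cor:colourful}) --- this, not a ``pair-of-paths factor'', is the bijective ingredient announced in the abstract; (b) the system of Theorem~\ref{thm:systemG}, which involves \emph{three} unknown series $\Pgf(t,y)$, $\Cgf(t,x,y)$, $\Dgf(t,x,y)$ counting patches, C-patches and D-patches, with \emph{two} catalytic variables and positive-part extractions $[x^{\ge 0}]$ rather than a kernel; and (c) a \emph{guess-and-check} solution (Theorem~\ref{thm:solg}): explicit exponentials of hypergeometric double and triple sums are guessed and then verified, the verification hinging on the identity $\exp(A(u,z))\left(1-zu\Cat(u)\right)=1$ of Lemma~\ref{lem:Cat}. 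The kernel method plays no role anywhere, and the parametrisation by $\Rgf$ is not ``recognised'' after a kernel substitution --- it is built into the guessed answer from the start. So the derivation of the exact formula $\Ggf(t)=\frac{1}{4t^2}\left(t-2t^2-\Rgf(t)\right)$, which is the heart of the theorem, is a genuine gap in your write-up.

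The second half of your proposal is essentially correct and coincides with Section~\ref{sec:asympt}. Your derivation of the order-$2$ equation is exactly right: the self-adjoint form of the hypergeometric equation integrates once to $\Rgf(1-16\Rgf)\frac{dP}{d\Rgf}=4t$, which becomes $\Rgf(16\Rgf-1)\Rgf''=4t(\Rgf')^3$, matching the paper; your observation that skipping the integration would only give order $3$ is a correct and worthwhile remark. The location of the singularity via $\Omega(1/16)=\frac{1}{4\pi}$, the singular behaviour $\Rgf-\frac1{16}\sim\frac14\frac{1-t/\rho}{\log(1-t/\rho)}$, the constant $\kappa=1/16$ from $g_n=-r_{n+2}/4$, and the non-D-finiteness argument all agree with Propositions~\ref{prop:asympt-gen} and the surrounding discussion (one small inaccuracy: D-finite series may well have irregular singularities; the correct statement is that the structure theorem for coefficients of D-finite series excludes negative powers of $\log n$, which is the argument the paper cites).
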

The first coefficients of $\Rgf$ and $\Ggf$ are
$$
\Rgf(t)= t-2t^2-4t^3-20t^4-132t^5 -\cdots, \qquad 
\Ggf(t)=t+5t^2+33t^3+ \cdots.
$$

\medskip
\noindent{\bf Remark.} In Section~\ref{sec:bij} {we will prove} that
$2\Ggf(t)$ {also has} an interpretation {in} terms of 3-coloured maps: it
counts (by faces) properly 3-coloured quadrangulations having no
bicolored face. Equivalently, it counts Eulerian orientations of
quartic maps with no \emm alternating vertex, (a vertex where the
order of the edges would be in/out/in/out). This is the special
case $\alpha=\beta$ of a two matrix model studied in~\cite{kazakov-zinn-justin}, where
the point $\alpha=\beta=1/(4\pi)$ is indeed identified as critical. 

\medskip

\noindent {\bf Outline of the paper.} In Section~\ref{sec:def} we begin
  with basic definitions on maps, orientations, and generating
  functions. We also discuss various models related to quartic
  Eulerian orientations. In Section~\ref{sec:func4} we write a system of
  functional equations that defines the \gf\ of quartic Eulerian
  orientations. We solve it in Section~\ref{sec:sol4}, using a
  guess-and-check approach. Then comes a bijective \emm intermezzo, in
  Section~\ref{sec:bij}, where we describe a bijection of
    Ambj\o rn and Budd~\cite{ambjorn-budd}. A specialization of this
  bijection   implies that
  general Eulerian orientations with $n$ edges are in one-to-two
  correspondence with  {certain restricted} quartic Eulerian orientations with
  $n$ vertices. In Section~\ref{sec:func-gen} we give a
  system of equations for these orientations, which we solve in
  Section~\ref{sec:sol-gen}. In Section~\ref{sec:asympt} we briefly discuss
  the nature of our \gfs\ and their singular behaviour, thus
  proving the  asymptotic statements in Theorems~\ref{thm:4}
  and~\ref{thm:gen}. Section~\ref{sec:final} finally raises some open problems.

\section{Definitions}
\label{sec:def}
\subsection{Planar maps}
%
A \emph{planar map} is a proper
 embedding of a connected planar graph in the
oriented sphere, considered up to orientation preserving
homeomorphism. Loops and multiple edges are allowed
(Figure~\ref{fig:example-map}). The \emph{faces} of a map are the
connected components of  its complement. The numbers of
vertices, edges and faces of a planar map $M$, denoted by $\vv(M)$,
$\ee(M)$ and $\ff(M)$,  are related by Euler's relation
$\vv(M)+\ff(M)=\ee(M)+2$.
 The \emph{degree} of a vertex or face is the number
of edges incident to it, counted with multiplicity. A \emph{corner} is
a sector delimited by two consecutive edges around a vertex;
hence  a vertex or face of degree $k$ {is incident to}  $k$ corners. 
The \emph{dual} of a
map $M$, denoted $M^*$, is the map obtained by placing a 
vertex of $M^*$ in each face of $M$ and an edge of $M^*$ across each
edge of $M$; see Figure~\ref{fig:example-map}, right. A map is said to be
\emph{quartic} if every vertex has degree~4. Duality transforms
quartic maps  into \emm quadrangulations,, that is, maps in which every face has
degree~4. A planar map  is \emm Eulerian, if every vertex has 
even degree. Its dual, with even face degrees, is then \emm
bipartite., We call  a face
  of degree~2 (resp. 4) a \emm digon, (resp. \emm quadrangle,).

\begin{figure}[h]
  \centering
  \includegraphics{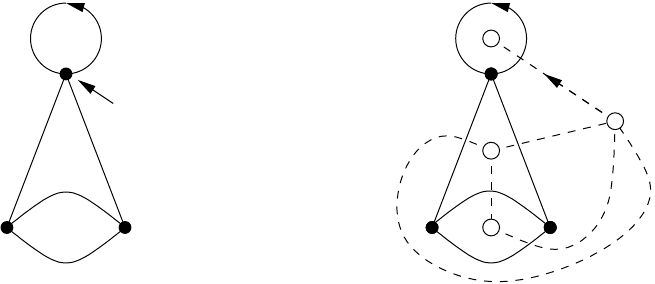}
\caption{Left: a rooted planar map, with the root edge and root corner
  shown. Right: the dual map, in dashed edges.}
\label{fig:example-map}
\end{figure}

For counting purposes it is convenient to consider \emm rooted, maps. 
A map is rooted by choosing an edge, called the root edge, and
orienting it. The starting point of this oriented edge is then the
\emm root vertex,, the other endpoint is the \emm co-root vertex,. The
face to the right of the root edge is the \emm
root face,, {and its edges are the \emm outer edges,. The face
to the left of the root edge is the \emm co-root face,.} Equivalently, one can root the map by selecting a
corner. The correspondence between these two rooting conventions is
that the oriented root edge follows the
root corner in anticlockwise 
order around the root vertex.
 In figures, we  usually choose the root face
as the infinite face (Figure~\ref{fig:example-map}). 
This explains why we often call the root face the \emm outer face, and
its degree the \emm outer degree, (denoted {$\od(M)$}). The other faces
are called \emm inner faces,. Similarly, we call the corners of the
outer face {\em outer corners} and all other corners {\em inner
  corners}.

From now on, every {map}
is \emph{planar} and \emph{rooted}, and these precisions will
  often be omitted. Our convention for rooting the dual of a map is
  illustrated on the right of Figure~\ref{fig:example-map}. Note that it makes duality of rooted
  maps a transformation of order 4 rather than 2. By convention,
we include among rooted planar maps the \emph{atomic map}
 having one vertex and no edge.

\subsection{Orientations}
\label{sec:def-orientations}
 
  A (planar) \emph{Eulerian orientation} is a (rooted, planar) map in
  which all edges are oriented, in such a way that
  the in- and out-degrees of {each} vertex are equal. We require
that the orientation chosen {for} the root edge is consistent with its
orientation coming from the rooting (Figure~\ref{fig:defs}, right).  Note that
the underlying  map must be Eulerian. We find {it} convenient to work with
duals of Eulerian orientations, which turn out to be equivalent to
certain \emm labelled maps,.

\begin{Definition}\label{def:labelled-map}
  A \emm labelled map, is a rooted planar map with integer labels on
  its vertices,  such that  adjacent labels differ by $\pm1$ and
  the root edge is labelled from $0$ to $1$. Such a map is necessarily
  bipartite. We also consider the atomic map, with a single  vertex (labelled $0$), to be a labelled map.
\end{Definition}
An example is shown in Figure~\ref{fig:labelled}.

\begin{figure}[htb]
  \centering
   \scalebox{0.9}{\input{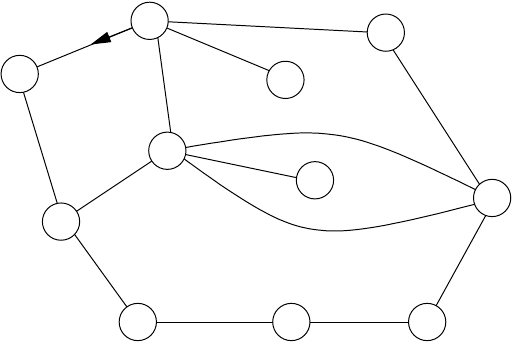_t}}
  \caption{A labelled map.}
  \label{fig:labelled}
\end{figure}

\begin{Lemma}\label{lem:duality}
  The duality transformation between Eulerian maps and bipartite maps
  can be extended into  a bijection between Eulerian orientations
  and labelled maps, which preserves the number of edges and
  exchanges vertex degrees and face degrees.
\end{Lemma}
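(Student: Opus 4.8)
The plan is to realise the claimed bijection as an enrichment of the duality map. Starting from an Eulerian orientation $O$, I pass to the dual map $M^\ast$ (which is bipartite since $O$ is Eulerian) and decorate its vertices---that is, the faces of $O$---with integer labels obtained from a \emph{height function} read off from the orientation. Concretely, every oriented edge $e$ of $O$ has a well-defined left face $L(e)$ and right face $R(e)$, and I impose $\ell(L(e))-\ell(R(e))=1$ for each edge. Since the root face lies to the right and the co-root face to the left of the root edge, normalising $\ell(\text{root face})=0$ forces $\ell(\text{co-root face})=1$, so the dual edge crossing the root edge runs from label $0$ to label $1$; this is exactly the normalisation demanded in Definition~\ref{def:labelled-map}, and it simultaneously fixes the otherwise free additive constant in $\ell$.

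The crux is that this rule defines a \emph{single-valued} function on the faces of $O$, and this is where---and only where---the Eulerian condition enters. The labels are consistent if and only if the prescribed increments sum to zero around every elementary cycle of $M^\ast$, i.e.\ around the boundary of each face of $M^\ast$; such faces correspond to the vertices of $O$. Walking counterclockwise around a vertex $v$ and crossing its incident edges in turn, a local inspection of the picture shows that each outgoing edge contributes $+1$ and each incoming edge contributes $-1$ to the running label, so the total increment equals $\mathrm{outdeg}(v)-\mathrm{indeg}(v)$. This vanishes precisely because $O$ is an Eulerian orientation. Adjacent labels differ by $\pm1$ by construction, so $\Phi(O):=(M^\ast,\ell)$ is a genuine labelled map.

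For the inverse, I start from a labelled map $L$; its underlying graph is bipartite, hence its dual is an Eulerian map. Each edge of $L$ joins labels $a$ and $a+1$, and I orient the corresponding dual edge so that the larger label lies to its left, the root edge being oriented $0\to1$ to match the rooting convention. The same local computation as above shows that at every vertex of the dual---corresponding to a face of $L$, around which the label increments automatically sum to zero---the in- and out-degrees agree, so $\Psi(L)$ is an Eulerian orientation with a consistently oriented root edge. The two constructions use opposite halves of the single rule ``the left face carries the larger label,'' so they are mutually inverse. Finally, the underlying-map transformation is plain duality, which preserves the number of edges and exchanges the degree sequences of vertices and faces; the decorations carry no extra degree information, so these statistics are preserved as claimed.

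The conceptual heart---that the height function is single-valued exactly when the orientation is Eulerian---is clean, and I expect the only delicate point to be the bookkeeping around the root. Because duality of rooted maps has order $4$ (as noted after Figure~\ref{fig:example-map}), I must pin down precisely which corner or edge of $M^\ast$ serves as its root, and then verify, in both directions, that the ``root edge labelled from $0$ to $1$'' condition corresponds exactly to the consistency of the root-edge orientation with the rooting. Getting these sign and rooting conventions mutually compatible is the one place where care is genuinely required.
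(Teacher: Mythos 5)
Your proposal is correct and follows essentially the same route as the paper, which itself only sketches the argument (deferring to~\cite{elvey-guttmann17} and Figure~\ref{fig:duality}): the labels are a height function on the faces, single-valued precisely because the increments around each vertex sum to $\mathrm{outdeg}-\mathrm{indeg}=0$, with the inverse given by orienting each dual edge according to which side carries the larger label. Your added care about the rooting convention and the fact that face boundaries generate the cycle space are exactly the details the paper leaves implicit.
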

The construction was already used
in~\cite[Prop.~2.1]{elvey-guttmann17}. It is illustrated in
Figure~\ref{fig:duality}. The idea is that an Eulerian orientation of
edges of a map gives a height function on its faces, or equivalently,
on the vertices of its dual. Height functions on regular grids, like the  square lattice, are much studied as models of  discrete random surfaces, expected to  converge to the Gaussian free field~\cite{chandgotia,glazman-manolescu}.

\begin{figure}[htb]
  \centering
   \scalebox{0.9}{\input{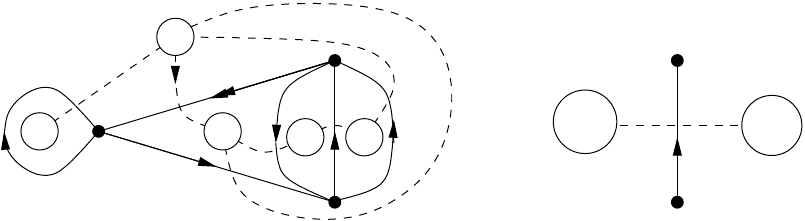_t}}
  \caption{An Eulerian orientation (solid edges) and the corresponding
    dual labelled map (dashed edges). The labelling rule is shown on
    the right.}
  \label{fig:duality}
\end{figure}

In the  case of a quartic Eulerian orientation, the
  (quadrangular) faces of the dual map can only have two types of
  labelling, shown in Figure~\ref{fig:two_vertex_types}. It is easily shown
  that, upon replacing every label by its value modulo 3, one obtains
  a proper 3-colouring of the vertices of the quadrangulation. Conversely, given a
  3-coloured quadrangulation such that the  root edge
  is oriented from 0 to 1, all faces must be of one of the types
  shown in Figure~\ref{fig:two_vertex_types}, and one can directly reconstruct an Eulerian orientation
of the dual quartic map using the rule of  Figure~\ref{fig:duality}.
Then the
  associated labelled quadrangulation projects on the coloured
  quadrangulation modulo 3. Hence 4-valent Eulerian orientations with
  $n$ vertices are in bijection with 3-coloured quadrangulations with
  $n$ faces (with the root edge oriented from 0 to 1), as claimed
  below Theorem~\ref{thm:4}. This correspondence between Eulerian
  orientations of a planar quartic graph and 3-colourings of its dual has
  been known for a long time. In the more general, non-planar case, the number of
  Eulerian orientations of a 4-valent graph is  given by the value of its
  \emm Tutte polynomial,  at the point $(0,-2)$, with no interpretation in terms of
  colourings~\cite[Sec.~3.6]{welsh-book}.

\begin{figure}[ht]
\setlength{\captionindent}{0pt}
   \includegraphics[scale=0.7]{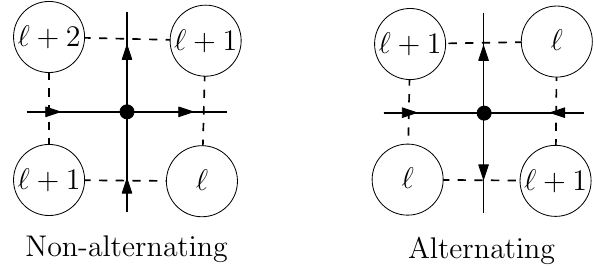} 
   \caption{The two types of vertices in a quartic Eulerian
     orientation, shown with the associated quadrangles in the dual labelled map. In the six vertex model, 
 alternating vertices are assigned the weight $\om$.}
   \label{fig:two_vertex_types}
\end{figure}

\medskip
\noindent
{\bf More orientations.} Obviously, quartic Eulerian
  orientations are orientations of a quartic map with exactly 2
  outgoing edges at each vertex. It turns out that the number of
  oriented \emm   quadrangulations, in which each vertex has outdegree
  2 is known. The associated series is D-finite. A simple bijection
  transforms these orientations into \emm bipolar, orientations
  of planar maps (no cycle, one source, one sink, both on the outer
  face)~\cite{felsner-baxter}. We refer the reader to~\cite{baxter-dichromatic,bonichon-mbm-fusy,felsner-baxter,fusy-poulalhon-schaeffer-bip}, and references
  therein. Analogous results exist for orientations of triangulations
  in which every vertex has outdegree 3, called \emm Schnyder orientations,~\cite{bernardi-bonichon,Boni05}. Let
  us also mention recent {progress regarding} bipolar orientations with
  prescribed face degrees~\cite{mbm-fusy-raschel}.

\subsection{The 6-vertex model and
  fully packed loops}
\label{sec:ice}
 The enumeration  of quartic Eulerian orientations has already been
  considered, and in some sense solved, in the mathematical physics literature, where it is
  called the ice model on a {4-valent} random lattice~\cite{kostov,zinn-justin-6V-random}. In this model an oxygen atom
    stands at every vertex, while the hydrogen atoms (two per oxygen  in  a water/ice molecule)
    lie on the edges, the arrows indicating with which oxygen
    they go. This is a special case of the \emm six vertex model,: in that
  model, the configurations are still Eulerian orientations, but a
  weight $\om=2\cos(\lambda\pi/2)$ is assigned to each vertex from
  which the two outgoing edges are opposite each other.  We call these vertices
  {\em alternating}  (see Figure \ref{fig:two_vertex_types}, right). The ice
  model then corresponds to $\om=1$, or equivalently,
  $\lambda=2/3$. In combinatorial terms, solving the six vertex model on a random lattice is equivalent to determining the refined generating $\Qgf(t,\omega)$ of
 quartic Eulerian orientations, {where $t$ still counts
   vertices}, and a weight $\om$ is assigned to
alternating vertices. Figure~\ref{fig:two_vertex_types} shows that
 $\Qgf(t,\om)$ also counts labelled quadrangulations by faces, with a
 weight $\om$ per face having  only two distinct labels.

 Kostov exactly solved the problem for general $\lambda$, though his solution was not entirely rigorous~\cite{kostov}. Kostov's solution relied on analysing the limiting eigenvalue distribution of a sequence of matrices, using results from complex analysis to determine this distribution. We had initially overlooked this solution, in part due to the unfamiliar language and techniques used. 
 In a forthcoming paper, the second author and Zinn-Justin provide a
 rigorous version of Kostov's derivation, while also fixing a mistake,
 resulting in a much simplified formula for $\Qgf(t,\omega)$ compared
 to the (incorrect) formula that one could extract directly
 from~\cite{kostov}. This new formula is written
 parametrically in terms of Jacobi theta functions (see~\cite{mbm-aep-zinn} for an extended abstract). In another forthcoming paper, the current authors generalise the methods in the present paper to rederive the same formula for $\Qgf(t,\omega)$, with our new derivation staying (almost) entirely within the world of formal power series. Our exact formula for $\Qgf(t,1)$ can also be shown to agree with the general formula for $\Qgf(t,\omega)$ at $\om=1$, though the equivalence is not obvious~\cite{mbm-aep-zinn}.

The quantity studied by Kostov is not exactly the series $\Qgf(t, \omega)$, but  the \emm free energy, $Z(t,\om)$ related to $\Qgf$ by
  \[
    \Qgf(t,\om)=2t\frac{d}{dt}\log(Z(t,\om)).
  \]
 Kostov,
  and also  Zinn-Justin~\cite{zinn-justin-6V-random}, predicted that for
  $\lambda\in[0,2)$, {that is, $\om \in (-2,2]$}, the {dominant}
    singularity of $Z(t, \om)$
occurs at
  \beq\label{rho}
\rho =
{\frac{1}{8\lambda\pi}\frac{\sin(\lambda\pi/4)}{\cos(\lambda\pi/4)^3}.}
\eeq
  In terms of $\om$, this is
  \[
\rho=\frac{1}{4\arccos(\om/2)}\frac{\sqrt{2-\om}}{(2+\om)^{3/2}}.\]
{Moreover}, Kostov \cite{kostov} predicted that the behaviour of the free energy around this singularity is
  \[\log(Z(t,\om))\sim \frac{(1-t/\rho)^2}{\log(1-t/\rho)},\]
{up to some multiplicative constant}.
This would result in:
\[
\Qgf(t, \om) \sim \frac{1-t/\rho}{\log(1-t/\rho)},
\]
up to some multiplicative constant. 

The generating function $\Qgf(t)$ of Theorem~\ref{thm:4} is
$\Qgf(t,1)$, so the predictions of Zinn-Justin and Kostov at $\om=1$
are verified by Theorem \ref{thm:4}
(see~Proposition~\ref{prop:asymptR} for the singular behaviour
  of $\Qgf(t)$, from which the asymptotic behaviour of the numbers
  $q_n$ stems). We also claim that our second theorem, Theorem~\ref{thm:gen}, solves the
case $\om=0$ 
 of the six vertex model. Indeed,  we will show that the generating function
$\Ggf(t)$ of general Eulerian orientations satisfies
$2\Ggf(t)=\Qgf(t,0)$ (see  Corollary
\ref{cor:colourful}). Hence the predictions of Zinn-Justin and Kostov for
$\om=0$ follow from Theorem \ref{thm:gen} and~Proposition~\ref{prop:asympt-gen}.

In our forthcoming paper, we analyse the exact formula for $\Qgf(t,\om)$. 
Our analysis strongly suggests 
that  the prediction~\eqref{rho} does not  hold
on the entire segment $(-2,2]$, but only for $\om >\om_c$, where
$\om_c$ is around $-0.76$.

\bigskip
\noindent{\bf Fully packed loops}\\
The case $\om=2$ is also well-understood, and boils down to
counting all planar maps weighted by their \emm Tutte polynomial,,
evaluated at the point $(3,3)$. This can be justified as follows:
starting from a quartic Eulerian orientation, we first transform each vertex
into a pair of vertices with degree~3, as shown in Figure
\ref{fig:two_vertex_types_transformations}. (This transformation has
already been used in, e.g.,~\cite{kostov,zinn-justin-6V-random}). The two possible choices
for each {alternating vertex} account for the weight $\om=2$ assigned to
these vertices. The resulting map is a cubic map in which each
vertex has one incoming edge, one outgoing edge and one
undirected edge. This transformation can be reversed by simply
contracting all undirected edges in the cubic map. The oriented edges
must then form loops on the cubic map, where each loop is oriented one
of two ways --- either clockwise or anticlockwise. Moreover,
  every vertex must be visited by a loop. In~\cite[Sec.~2.1]{borot3}, this model of
fully packed loops on cubic maps is shown to be equivalent to the
4-state Potts model on general planar maps, in which every
monochromatic edge gets a weight 3, and every vertex a weight
$1/2$. {Finally, using} the correspondence between the Potts model and
the Tutte polynomial (see, e.g.,~\cite[Sec.~3.3]{bernardi-mbm-alg}), we conclude that
\[
\Qgf(t,2)= \sum_{M \small{\hbox{ planar}}} t^{\ee(M)} \Tpol_M(3,3)
=6\,t+78\,{t}^{2}+1326\,{t}^{3}+25992\,{t}^{4}+O ( {t}^{5}) ,
\]
where $\Tpol_M(\mu, \nu)$ is the Tutte polynomial of $M$
(see~\cite{welsh-book}). This series was proved to satisfy an (explicit) non-linear
differential equation of order 3 (see~\cite[Thm.~16]{BeBM-15} for $\beta=2$), but, to our knowledge,
the singular behaviour of $\Qgf(t,2)$ has not been derived from
it. From this differential equation, one can in fact guess-and-prove a smaller differential
  equation for $\Qgf(t,2)$, of order 2 
  and degree $2$ (for $\om=0$ or $1$, we obtain DEs of order 2
    but degree 3). Written in terms of the series $S(t)=t^2(1+\Qgf(t,2))$ of~\cite{BeBM-15}, it reads:
  \[
    (1-32t)\left( 6S-2tS'-t\right)S''+2t(1-4S')^2=0.
  \]

\begin{figure}[t]
\setlength{\captionindent}{0pt}
   \includegraphics[scale=0.7]
{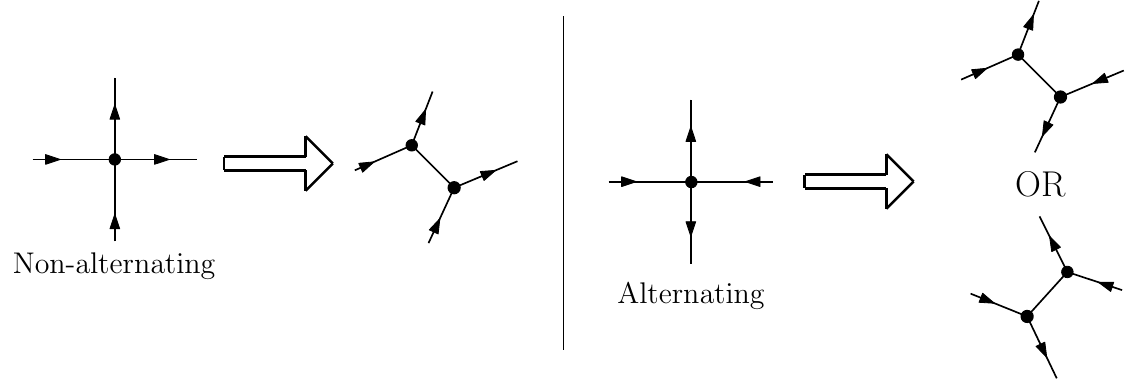} 
   \caption{The transformation from vertices in a quartic Eulerian orientation to pairs of vertices in certain cubic Eulerian partial orientations. This transformation applies when  $\om=2$.}   \label{fig:two_vertex_types_transformations}
\end{figure}

\subsection{Formal power series}

Let $A$ be a commutative ring and $x$ an indeterminate. We denote by
$A[x]$ (resp. $A[[x]]$) the ring of polynomials (resp. \fps) in $x$
with coefficients in $A$. If $A$ is a field,  then $A(x)$ denotes the field
of rational functions in $x$. We will also consider  Laurent series in $x$, that is, series of the form
\[ \sum_{n \ge n_0} a_n x^n,
\]
with $n_0\in \zs$ and $a_n\in A$.  The coefficient of $x^n$ in a   series $F(x)$ is denoted by
$[x^n]F(x)$. 

This notation is generalised to polynomials, fractions
and series in several indeterminates. For instance, the  \gf\ of
Eulerian orientations, counted by edges (variable $t$) and faces
(variable $z$)  belongs to
$\qs[[t,z]]$. For a multivariate series, say $F(x,y) \in \qs[[x,y]]$,
the notation $[x^i]F(x,y)$ stands for the \emm series, $F_i(y)$ such
that $F(x,y)=\sum_j F_j(y) x^j$. It should not be mixed up with the
coefficient of $x^iy^0$ in $F(x,y)$, which we denote by $[x^i
y^0]F(x,y)$.
If $F(x,x_1, \ldots, x_d)$ is a series in the $x_i$'s whose
coefficients are Laurent series in $x$, say
\[
F(x,x_1, \ldots, x_d)= \sum_{i_1, \ldots, i_d} x_1^{i_1} \cdots
x_d^{id}
\sum_{n \ge n_0(i_1, \ldots, i_d)} a(n, i_1, \ldots, i_d) x^n,
\]
then we define the \emm non-negative part of $F$ in $x$, as the
following \fps\ in $x, x_1, \ldots, x_d$:
\[
[x^{\ge 0}]F(x,x_1, \ldots, x_d)= \sum_{i_1, \ldots, i_d} x_1^{i_1} \cdots
x_d^{id}
\sum_{n \ge 0} a(n, i_1, \ldots, i_d) x^n.
\]
We define similarly the \emm positive part, of $F$ in $x$, denoted
$[x^{>0}]F$.

If $A$ is a field,  a power series $F(x) \in A[[x]]$
  is \emm algebraic, (over $A(x)$) if it satisfies a
non-trivial polynomial equation $P(x, F(x))=0$ with coefficients in
$A$. It is \emm differentially algebraic, (or \emm D-algebraic,) if it satisfies a non-trivial polynomial
differential equation $P(x, F(x), F'(x), \ldots, F^{(k)}(x))=0$ with
coefficients in $A$. It is \emm D-finite, if it satisfies a \emm
linear, differential equation with coefficients  in $A(x)$. For
multivariate series, D-finiteness and D-algebraicity require the
existence of a differential equation \emm in each variable,.  {We refer to~\cite{Li88,lipshitz-df} for
general results on D-finite series, and to~\cite[Sec.~6.1]{BeBMRa17} for
 D-algebraic series.}


\section{Functional equations for quartic Eulerian orientations}
\label{sec:func4}

In this section we will characterise  the generating function $\Qgf(t)$
of labelled quadrangulations by a system of functional equations.
\begin{Theorem}\label{thm:systemQ}
  There exists a unique $3$-tuple of series, denoted $\Pgf(t,y)$,
  $\Cgf(t,x,y)$ and $\Dgf(t,x,y)$, belonging respectively to $\qs[[y,t]]$,
  $\qs[x][[y,t]]$ and $\qs[[x,y,t]]$, and satisfying the
  following equations:
\begin{align*}
\Pgf(t,y)&=\frac{1}{y}[x^1]\Cgf(t,x,y),\\
\Dgf(t,x,y)&=\frac{1}{1-\Cgf\left(t,\frac{1}{1-x},y\right)},\\
\Dgf(t,x,y)&=1+{y}\, [x^{\geq0}]\left(\Dgf(t,x,y)\left([y^{1}]\Dgf(t,x,y)+\frac{1}{x}\Pgf\left(t,\frac{t}{x}\right)\right)\right),
\end{align*}
together with the initial condition
\[[y^{1}]\Dgf(t,x,y)=\frac{1}{1-x}\left(1+2t[y^2]\Dgf(t,x,y)-t([y^1]\Dgf(t,x,y))^2\right).\]
The \gf\ $\Qgf(t)$ that counts labelled quadrangulations by faces
is
\[
\Qgf(t)=[y^1]\Pgf(t,y)-1.
\]
By  Lemma~\ref{lem:duality},  the series $\Qgf(t)$  also counts  quartic Eulerian orientations  by
vertices.
\end{Theorem}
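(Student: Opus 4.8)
The plan is to give each of the three series a combinatorial meaning as the generating function of an explicit family of pieces of labelled quadrangulations, and then to obtain the three functional equations (and the initial condition) as the algebraic translations of recursive decompositions of these pieces. Throughout, $t$ records faces, while the catalytic variables $x$ and $y$ encode boundary data needed to glue pieces together and which disappear once a piece is completed. The identity $\Qgf(t)=[y^1]\Pgf(t,y)-1$ will then follow from the combinatorial reading of $\Pgf$, and Lemma~\ref{lem:duality} will transfer the count to quartic Eulerian orientations.

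First I would fix the objects. Since a labelled quadrangulation can be built face by face, the natural tool is a peeling-type decomposition: one processes the map by repeatedly removing a quadrangle along the root boundary, reading off the labels encountered. The series $\Dgf(t,x,y)$ should count the pieces produced this way, with $x$ recording the exposed boundary and $y$ recording a current (relative) label, while $\Cgf(t,x,y)$ counts the ``irreducible'' pieces. In this language the relation $\Dgf=1/(1-\Cgf(t,1/(1-x),y))$ expresses a general piece as a sequence of irreducible ones, the substitution $x\mapsto 1/(1-x)$ being the standard device turning concatenation of boundary segments into a geometric series. Finally $\Pgf(t,y)=\frac1y[x^1]\Cgf(t,x,y)$ isolates the contribution of a single marked boundary label, which is exactly what is needed so that $[y^1]\Pgf(t,y)-1$ recovers complete quadrangulations (the $-1$ removing the atomic map, which carries the constant term~$1$).

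Second, and this is where the real work lies, I would derive the third equation by analysing the effect of gluing one more quadrangle. The factor $y$ marks the added face, and the sum $\frac1x\Pgf(t,t/x)+[y^1]\Dgf(t,x,y)$ inside the brackets should reflect the two local configurations of a quadrangle (the two vertex types of Figure~\ref{fig:two_vertex_types}): one attaching a completed submap counted by $\Pgf$, the other attaching a boundary segment counted by $[y^1]\Dgf$. The substitution $\Pgf(t,t/x)$ together with the factor $1/x$ should encode the label shift across the glued face and the exchange between the face weight $t$ and the boundary variable, while the operator $[x^{\ge 0}]$ discards the configurations producing an inconsistent (negative) boundary, exactly as in the kernel method. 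Getting all of this bookkeeping right --- the precise powers of $x$ and $y$, the meaning of $t/x$, and the initial condition relating $[y^1]\Dgf$, $[y^2]\Dgf$ and $([y^1]\Dgf)^2$ --- is the delicate part, and I expect it to be the main obstacle.

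Finally, existence and uniqueness of the solution I would settle by a purely formal argument, independent of the combinatorics. The explicit factor of $t$ in the initial condition and of $y$ in the third equation make the system triangular: working modulo $t^{n+1}$ and extracting the coefficient of $y^k$ for increasing $k$, each equation expresses a new coefficient in terms of previously determined ones, so all coefficients are forced and the solution is unique. Since the combinatorial generating functions built above satisfy the same system, they must coincide with this unique solution; this simultaneously yields existence, confirms the identity $\Qgf(t)=[y^1]\Pgf(t,y)-1$, and, via Lemma~\ref{lem:duality}, completes the statement.
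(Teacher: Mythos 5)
Your proposal correctly anticipates the overall architecture of the paper's argument (interpret the three series as generating functions of explicit families of partial labelled quadrangulations, prove each equation by a decomposition, then get uniqueness by a formal induction), but the two components that carry essentially all of the difficulty are left as acknowledged obstacles, and the guesses you offer for them do not match what actually works. The paper's objects are not produced by a peeling process: they are \emph{patches}, \emph{C-patches} and \emph{D-patches}, defined by boundary conditions (outer face alternately labelled $0$ and $1$; for C- and D-patches, all neighbours of the root vertex labelled $1$; for C-patches, the root corner is the unique outer corner at the root vertex; D-patches additionally allow digons incident to the root), with $x$ marking the root-vertex degree for $\Cgf$ but the number of inner digons for $\Dgf$. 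Getting these conditions exactly right is what makes all five equations hold simultaneously; in particular the relation $\Dgf=1/(1-\Cgf(t,1/(1-x),y))$ decomposes into two separate steps (a sequence of C-patches glued at the root vertex, then inflation of each root-incident edge into a sequence of digons, which is where $x\mapsto 1/(1-x)$ comes from), rather than the single ``concatenation of boundary segments'' you describe.

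More seriously, your proposed reading of the third equation is wrong: the two summands $\frac1x\Pgf(t,t/x)$ and $[y^1]\Dgf(t,x,y)$ do not correspond to the two vertex types of Figure~\ref{fig:two_vertex_types}. In the paper they correspond to a case split on whether the vertex at a specific outer corner $c_0$ coincides with the root vertex (giving the splitting into two D-patches, hence the $[y^1]\Dgf\cdot\Dgf$ term) or not (in which case one extracts the \emph{minus-subpatch} rooted at $c_0$, contracts it to a single vertex, and converts the contracted map into a smaller D-patch). The substitution $t/x$ and the operator $[x^{\ge 0}]$ encode the exchange between the outer degree of the extracted minus-patch, the quadrangles it absorbs upon reinsertion, and the digons of the contracted map; establishing that this extraction/contraction is reversible requires a genuine argument (the reconstruction of Figure~\ref{fig:decontract}). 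Finally, your uniqueness sketch glosses over a real subtlety: the third equation expresses $d_{j,\cdot}$ in terms of $d_{j-1,\cdot}$ and $d_{1,\cdot}$, so it cannot determine $d_{1,N}$ by itself; the initial condition is precisely what breaks this circularity, and the induction must run on $j+n\le N$ with the third and fourth equations interleaved. As it stands the proposal is a plausible plan, not a proof.
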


\medskip
\noindent{\bf Remarks}\\
{\bf 1.} With the conditions {on} the series
$\Pgf$, $\Cgf$ and $\Dgf$, the operations that occur in the above
equations are always well defined:
\begin{itemize}
\item 
the coefficient of $x$ in $\Cgf(t,x,y)$ lies  in $\qs[[y,t]]$,
\item the series $\Cgf(t, 1/(1-x),y)$  {indeed lies in} $\qs[[x,y,t]]$ (upon
expanding the powers of $1/(1-x)$ as series in $x$),
\item denoting by $p_{j,n}\in \qs$ the coefficient of $y^j t^n$ in
$\Pgf(t,y)$, and by $d_{j,n}(x)\in \qs[[x]]$ the coefficient of $y^j t^n$ in
$\Dgf$, the quantity
\begin{align*}
 \frac 1 x \Dgf(t,x,y) \, \Pgf\!\left(t, \frac tx \right)
&=  \left(\sum_{j,n\ge 0} d_{j,n}(x) y^j t^n
  \right)\left( \sum_{i, m \ge 0} p_{i,m}  \frac 1{x^{i+1 }}t^{i+m}\right)
\\
&= \sum_{j, N\ge 0} y^j t^N \sum_{i+m+n=N} p_{i,m}\,d_{j,n}(x) \frac 1{x^{i+1 }}
\end{align*}
is a series in $y$ and $t$ whose coefficients are Laurent series in
$x$ (because $i, m$ and $n$ are bounded). It thus makes sense to
extract its non-negative part in $x$, which will lie in $\qs[[x,y,t]]$.
\end{itemize}
{\bf 2.} In~\cite{elvey-guttmann17}, another system was given to
characterise the series $\Qgf(t)$. It is more complicated than the one
above. In particular, it involves three additional variables (other
than the main size variable $t$) rather than two. We could not solve
that complicated system, but we solve the one above in the next
section.

\medskip
The series  $\Cgf$, $\Dgf$ and $\Pgf$ of Theorem~\ref{thm:systemQ}
count certain labelled maps, which we now define. See
Figure~\ref{fig:DandP_examples} for an illustration.

\begin{Definition}\label{def:patches}
A {\em patch} is a labelled map in which each inner face has degree $4$,
and the vertices around the outer face are alternately labelled $0$ and
$1$. 

A \emm C-patch, is a patch satisfying two additional
  conditions:  all neighbours of the root vertex are labelled
  $1$, and the root corner is the only outer corner at the root
  vertex. By convention, the atomic patch is \emm not, a C-patch.

 {\em D-patches} resemble patches but may include digons.  More
precisely, a D-patch is a labelled map in which each inner face 
has degree $2$ or $4$, those of degree $2$ being incident to
the root vertex, and the vertices around the outer face
are alternately labelled $0$ and $1$. We also require that all
neighbours of the root vertex are labelled $1$.
\end{Definition}

\begin{figure}[ht]
\setlength{\captionindent}{0pt}
   \includegraphics[scale=0.7]{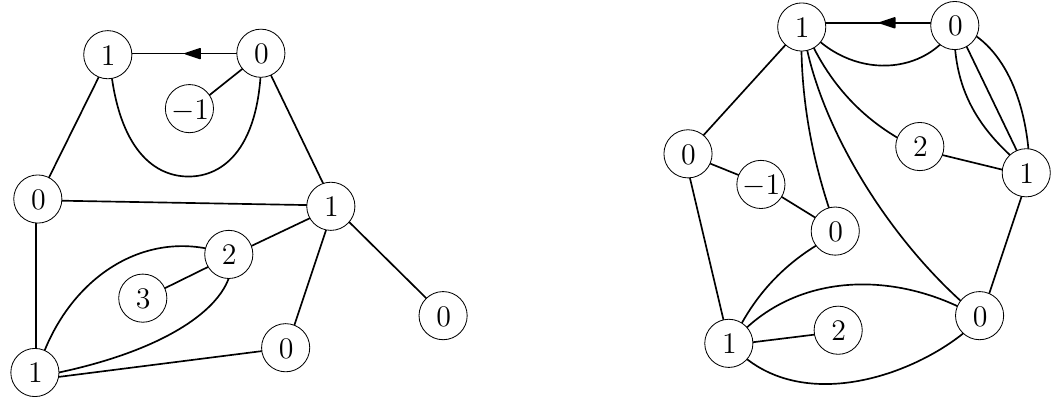} 
   \caption{
Left: a patch which contributes $t^{5}y^{4}$ to the generating
function $\Pgf(t,y)$. It does not satisfy  the first condition
  of  a C-patch. Right: a D-patch which contributes
  $t^{6}x^{3}y^{3}$ to the generating function
  $\Dgf(t,x,y)$.}
   \label{fig:DandP_examples}
\end{figure}

We define $\Pgf(t,y)$, $\Cgf(t,x,y)$ and $\Dgf(t,x,y)$ to be
respectively the \gfs\ of patches, C-patches and D-patches, where $t$
counts inner quadrangles, $y$ the outer degree (halved), and $x$
either the degree of the root vertex (for C-patches)  or the number of
inner digons (for D-patches). Comparing with the previous
  paper giving functional equations for this problem~\cite{elvey-guttmann17}, we see that one
  parameter, namely the degree of the co-root vertex, is no longer
  involved here.
The series $\Pgf$, $\Cgf$ and $\Dgf$ actually
  belong to the rings prescribed by Theorem~\ref{thm:systemQ}: 
  \begin{itemize}
  \item for  {$\Pgf$}  it suffices to observe that there are finitely
  many patches with  $n$ inner quadrangles and outer degree $2j$,
\item for {$\Dgf$}  we observe that there are finitely
  many D-patches with $n$ inner quadrangles, $i$ inner digons and outer
  degree $2j$, 
\item  finally for {$\Cgf$}, we note that a C-patch with $n$ inner
  quadrangles cannot have  a root
  vertex of degree  larger than $1+4n$, because all non-root corners
  at the root vertex must belong to an inner quadrangle (by the second
  condition of Definition~\ref{def:patches}). This explains
  the polynomiality of $[t^n]\Cgf$ in $x$ (and yields in fact a smaller ring than
  $\qs[x][[y,t]]$, namely $\left(\qs[[y]][x]\right)[[t]]$, but this
  won't be needed).
  \end{itemize}

In the next 5 lemmas, we prove that the series that we have defined
satisfy the 5 equations of Theorem~\ref{thm:systemQ}. We will finish the
section by proving that the system has a unique solution in the prescribed rings of series.

\begin{figure}[ht]
\setlength{\captionindent}{0pt}
   \includegraphics[scale=0.7]{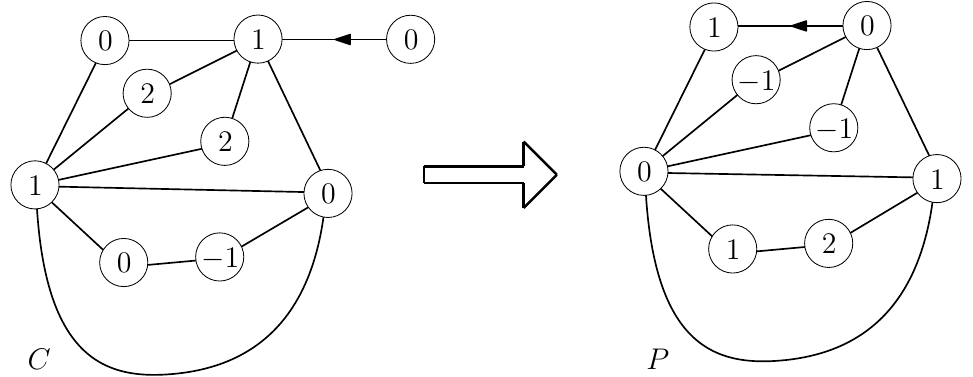} 
   \caption{The transformation of $C$ into $P$ used in the proof of
     Lemma~\ref{PfromC}.}
   \label{fig:CfromP_example}
\end{figure}

\begin{Lemma}\label{PfromC}
The generating functions $\Pgf(t,y)$ and $\Cgf(t,x,y)$ satisfy the equation
\[\Pgf(t,y)=\frac{1}{y}[x^1]\Cgf(t,x,y).\]
\end{Lemma}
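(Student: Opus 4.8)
The plan is to prove the identity $\Pgf(t,y)=\frac1y[x^1]\Cgf(t,x,y)$ bijectively, following the transformation sketched in Figure~\ref{fig:CfromP_example}. The first observation is that $[x^1]\Cgf(t,x,y)$ is exactly the \gf\ of those C-patches whose root vertex has degree~$1$, since $x$ marks the degree of the root vertex. I would therefore construct a bijection $\Phi$ between C-patches with a degree-$1$ root vertex and arbitrary patches, designed so that it preserves the number of inner quadrangles (the exponent of $t$) while decreasing the half-outer-degree (the exponent of $y$) by exactly one; this accounts for the factor $1/y$.

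Here is the map $\Phi$. In a C-patch $M$ with root vertex $r$ of degree~$1$, the root edge $rv$ is a pendant edge joining $r$ (labelled $0$) to its unique neighbour $v$ (labelled $1$, by the defining condition of Definition~\ref{def:patches} that all neighbours of the root vertex are labelled $1$). Being a bridge ending at the leaf $r$, this edge borders the outer face on both sides. I delete $r$ together with the edge $rv$; the two outer corners of $v$ flanking the deleted edge merge into a single outer corner $c^{\ast}$, which I use to re-root: the new root edge is the outer edge at $v$ immediately following $c^{\ast}$ in anticlockwise order around $v$, oriented towards $v$, so that $v$ becomes the co-root vertex. I then claim $\Phi(M)$ is a patch.

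The invariants are checked as follows. Deletion leaves every inner face untouched, so the number of inner quadrangles is preserved, while removing the bridge (counted twice on the outer boundary) decreases the outer degree by $2$, hence the half-outer-degree by exactly $1$. One must also verify that $\Phi(M)$ still has its outer vertices labelled alternately $0$ and $1$: along the outer boundary of $M$ the deleted excursion reads $1,0,1$ (the two sides of the bridge around $v$, with $r$ in between), and collapsing it to a single $1$ preserves the alternation. The inverse $\Psi$ takes a patch $Q$ with co-root vertex $\sigma$ (labelled $1$), attaches a fresh leaf labelled $0$ by a new pendant edge inserted in the corner of $\sigma$ prescribed by the root corner of $Q$, and re-roots at this pendant edge, producing a C-patch whose root vertex has degree~$1$. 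Summing over all patches then yields $[x^1]\Cgf(t,x,y)=y\,\Pgf(t,y)$, which is the asserted equation.

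The delicate point, and the main obstacle, is the re-rooting. I must pin down precisely which of the two outer edges at $v$ adjacent to $c^{\ast}$ becomes the new root edge, check that this choice is canonical, and confirm that $\Psi$ recovers exactly the corner of $\sigma$ in which the pendant edge is to be reinserted, so that $\Phi$ and $\Psi$ are genuinely mutually inverse. A secondary point is the degenerate case: when $v$ itself has degree~$1$, the map $\Phi(M)$ carries no edge and must be identified with the atomic patch (relabelling its unique vertex from $1$ to $0$, which is harmless since there is no edge to constrain). This matches the constant term $1$ of $\Pgf(t,y)$ against the unique single-edge C-patch, and is the base case that makes the bookkeeping consistent.
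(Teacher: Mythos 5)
Your proof is correct and takes essentially the same route as the paper's: delete the degree-$1$ root vertex and its pendant edge, re-root at the adjacent outer edge, and observe that this is a quadrangle-preserving, reversible correspondence with patches that lowers the outer degree by exactly $2$. The only (immaterial) difference is that the paper applies the global relabelling $\ell\mapsto 1-\ell$ so that $v$ becomes the new \emph{root} vertex, whereas you keep the labels and orient the new root edge towards $v$, making it the co-root vertex; both conventions yield the same bijection.
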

\begin{proof}
Let $C$ be any C-patch  counted by $[x^{1}]\Cgf(t,x,y)$, that is, in
which the root vertex has degree 1.  We construct a new patch $P$ from
$C$, as illustrated in Figure~\ref{fig:CfromP_example}: we delete the root edge and root
vertex of $C$,   replace each label $\ell$ with $1-\ell$, and finally
root $P$ at the outer edge  of~$C$ following the root edge of $C$
anticlockwise. Then the new labelled map $P$ is indeed a patch. If $C$
 contains only one edge then $P$ is the atomic map. The outer degree
 has decreased by 2, while the number of inner quadrangles is
 unchanged. Finally, the transformation from $C$ to $P$ is
 reversible. This proves the lemma.
\end{proof}

\begin{figure}[ht]
\setlength{\captionindent}{0pt}
   \includegraphics[scale=0.7]{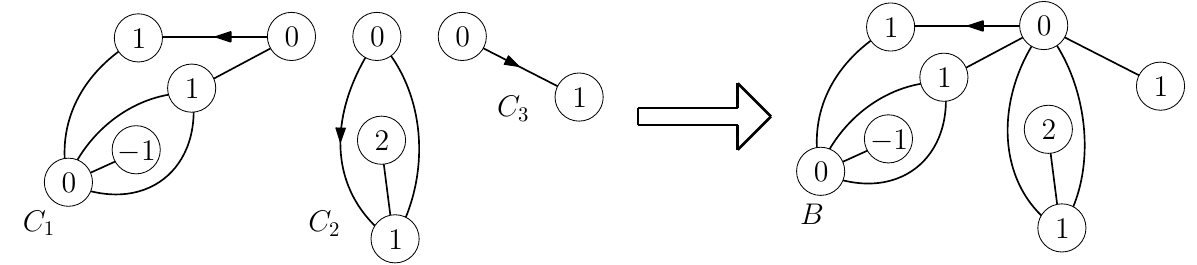} 
   \caption{A sequence of C-patches gives rise to a B-patch, as in
     Lemma~\ref{DfromC}.}
   \label{fig:DfromC_example}
\end{figure}

\begin{Lemma}\label{DfromC}
The generating functions $\Dgf(t,x,y)$ and $\Cgf(t,x,y)$ satisfy the equation
\[
  \Dgf(t,x,y)=\frac{1}{1-\Cgf\left(t,\frac{1}{1-x},y\right)}.
\]
\end{Lemma}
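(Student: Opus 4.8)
The plan is to read the identity as a bijective decomposition of a D-patch at its root vertex $v_0$, which carries label $0$. Expanding the right-hand side as
\[
\frac{1}{1-\Cgf\left(t,\frac{1}{1-x},y\right)}=\sum_{k\ge 0}\Cgf\left(t,\frac{1}{1-x},y\right)^{k},
\]
I must match D-patches with ordered sequences $(C_1,\dots,C_k)$ of C-patches in which the variable $x$ marking the root-vertex degree of each $C_i$ has been replaced by $\frac{1}{1-x}$. I would treat the two features of the right-hand side separately: the geometric series should record the sequence of C-patches cut out of a \emph{digon-free} patch along the outer corners at $v_0$, while the substitution $x\mapsto\frac{1}{1-x}$ should record the insertion of inner digons along the edges at $v_0$.

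For the sequence part, first assume the D-patch has no digon, so it is a patch with root vertex labelled $0$ and all neighbours labelled $1$. Reading the cyclic order around $v_0$ from the root corner, the outer corners at $v_0$ (say there are $k$ of them) cut the incident edges and inner faces into $k$ angular sectors. I would check that each sector, once extracted and rooted at its first edge, is exactly a C-patch: the two outer corners bounding it merge into a single root corner, its inner faces are quadrangles, and it is non-atomic since a sector contains at least one edge (this matches the exclusion of the atomic patch from the C-patches). Reading the sectors in order from the root corner produces the ordered sequence $(C_1,\dots,C_k)$, with $C_1$ inheriting the root edge; the case $k=0$ is the atomic D-patch and accounts for the constant term $1$. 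This is the standard decomposition of a map at a cut vertex, and should be routine.

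For the digon part, I would read the substitution $x\mapsto\frac{1}{1-x}$ as an inflation of edges. A C-patch with root-vertex degree $d$ contributes $x^d$, which becomes $\frac{1}{(1-x)^d}=\prod_{i=1}^{d}(1+x+x^2+\cdots)$; the $i$-th factor is interpreted as inserting a non-negative number of parallel copies of the $i$-th edge at $v_0$, each extra copy creating one inner digon incident to $v_0$. A key observation making this clean is that two consecutive edges at the root of a C-patch necessarily go to distinct vertices (otherwise the face between them would be a digon, which C-patches forbid), so the bundles inserted on different edges do not interfere; inserting the copies on a fixed side of each edge preserves both the outer degree (hence the variable $y$) and the inner quadrangles (hence $t$). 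Thus $\frac{1}{(1-x)^d}$ enumerates exactly the ways of distributing inner digons among the $d$ edges at $v_0$, with $x$ now marking their total number, as required by $\Dgf$.

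Assembling the two steps yields the bijection: a D-patch is obtained from a sequence of C-patches glued at $v_0$ by inflating the edges at $v_0$ into parallel bundles, and conversely one recovers the sequence by collapsing each maximal parallel bundle at $v_0$ to a single edge and then slicing the resulting patch along its outer corners. The main obstacle I anticipate is verifying that this collapse/inflation is genuinely well defined and bijective: I must argue that every inner digon of a D-patch sits in a unique parallel bundle at $v_0$, that collapsing these bundles destroys no quadrangle and returns a valid patch, and that the fixed choice of side for the inserted copies makes the two maps mutually inverse. Once this is settled, the weights match term by term ($t$ per quadrangle, $y$ per half the outer degree, $x$ per digon), and the identity follows.
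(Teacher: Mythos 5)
Your proposal is correct and follows essentially the same route as the paper: the geometric series corresponds to gluing an ordered sequence of C-patches at the root vertex (the paper calls the intermediate object a \emph{B-patch}, with generating function $1/(1-\Cgf(t,x,y))$), and the substitution $x\mapsto \frac{1}{1-x}$ corresponds to inflating each edge at the root vertex into a sequence of digons. Your auxiliary claim that two consecutive edges at the root of a C-patch must go to distinct vertices is not actually needed for the inflation to be well defined (and is not true in general, since such a pair may bound an inner quadrangle rather than a digon), but this does not affect the argument.
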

\begin{proof}
Recall that C-patches satisfy two conditions: all neighbours of
the root vertex have label~1, and the root vertex is only incident
once to the root face. By attaching a sequence of C-patches at their
root vertex, as shown in Figure~\ref{fig:DfromC_example}, we form a
\emm B-patch,, that is, a patch satisfying
only the first of these conditions. The associated \gf\ is 
\[
\Bgf(t,x,y)=\frac{1}{1-\Cgf(t,x,y)}.
\]
As before, $t$ counts inner quadrangles, $x$ the degree of the root
vertex and $y$ the outer degree (halved). 

\begin{figure}[ht]
\setlength{\captionindent}{0pt}
   \includegraphics[scale=0.7]{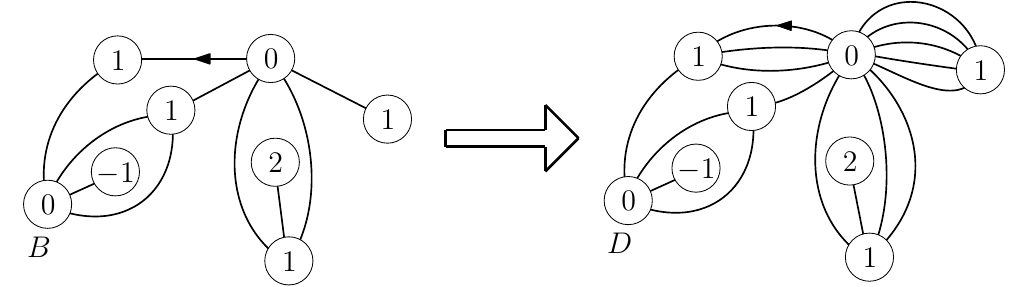} 
   \caption{The transformation from a B-patch to a D-patch, as in
     Lemma~\ref{DfromC}.}   \label{fig:DfromB_example}
\end{figure} 

Now in order to construct a  D-patch, {it suffices to take a B-patch and inflate every edge which is incident to the root vertex} into a sequence of digons, as shown in Figure \ref{fig:DfromB_example}. This explains the transformation $x\mapsto 1/(1-x)$ occurring in the lemma. In this way
the variable $x$ now counts digons of D-patches.
\end{proof}

\begin{figure}[b]
\setlength{\captionindent}{0pt}
   \includegraphics[scale=0.7]{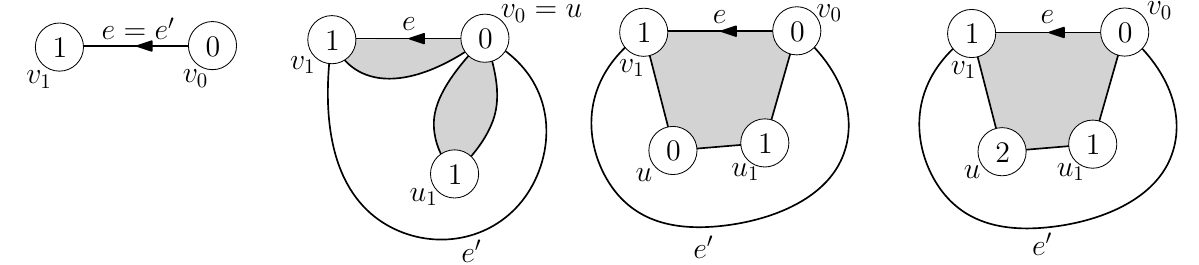} 
   \caption{The four different types of patches which contribute to
     the generating function $[y^{1}]\Cgf(t,x,y)$. 
In the third and fourth cases it is possible that
$u_{1}=v_{1}$. The shaded area represents a labelled quadrangulation.}
   \label{fig:Dout2_cases}
\end{figure}

\begin{Lemma}\label{Dout2}
The generating function $\Dgf(t,x,y)$ satisfies the equation
\[[y^{1}]\Dgf(t,x,y)=\frac{1}{1-x}\left(1+2t[y^2]\Dgf(t,x,y)-t([y^1]\Dgf(t,x,y))^2\right).\]
\end{Lemma}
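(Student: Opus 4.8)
The plan is to count D-patches of outer degree $2$ (those contributing to $[y^1]\Dgf(t,x,y)$) by looking at the inner face $F$ incident to the root edge $e_1$. By Definition~\ref{def:patches}, $F$ is either absent, a digon, or a quadrangle, and these possibilities are mutually exclusive and exhaustive; the quadrangle case will itself split in two according to the choice of a new root, producing the four types drawn in Figure~\ref{fig:Dout2_cases}. Concretely, I would prove the equivalent polynomial form obtained by clearing the denominator,
\[
[y^1]\Dgf=1+x\,[y^1]\Dgf+2t\,[y^2]\Dgf-t\,([y^1]\Dgf)^2,
\]
and then divide by $1-x$ to recover the statement.

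First I would dispose of the two easy cases. If $F$ is absent, the map is reduced to the single edge labelled $0,1$, which has outer degree $2$ and no inner face, contributing the constant term $1$. If $F$ is a digon, it is bounded by $e_1$ together with a parallel edge $e_1'$; deleting $e_1$ merges $F$ with the outer digon into a new outer digon, and rerooting at $e_1'$ yields an arbitrary D-patch of outer degree $2$ carrying one fewer inner digon. This is reversible (reinsert a parallel edge along the root edge), and since $x$ records inner digons this case contributes exactly $x\,[y^1]\Dgf$.

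The substance lies in the case where $F$ is a quadrangle $Q$. Here I would delete the root edge $e_1$, shared by $Q$ and the outer digon; this fuses faces of degrees $4$ and $2$ into a single outer face of degree $4$, turning the map into a D-patch counted by $[y^2]\Dgf$, while the destroyed quadrangle is restored by a factor $t$. The new root vertex $r$ is now incident to two outer edges, either of which may be chosen as the new root edge, and conversely reattaching a parallel edge along either of the two label-$1$ boundary neighbours of $r$ rebuilds a quadrangle-rooted patch. Throughout I must check that every intermediate object really is a D-patch, i.e.\ that all neighbours of $r$ keep label $1$ and that every inner digon stays incident to $r$. This $2$-to-$1$ bookkeeping accounts for the term $2t\,[y^2]\Dgf$.

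The main obstacle, and the source of the subtracted term, is that this correspondence over-counts the degenerate configurations in which the two label-$1$ vertices adjacent to $r$ coincide (the case $u_1=v_1$ flagged in the caption of Figure~\ref{fig:Dout2_cases}). There the quadrangle $Q$ pinches at the repeated vertex and the patch separates into two independent D-patches of outer degree $2$; such a configuration is produced by both rerootings but should be counted once. The delicate step is to set up a clean bijection between this over-counted family and a single quadrangle joining an ordered pair of outer-degree-$2$ D-patches, whose generating function is precisely $t\,([y^1]\Dgf)^2$, and to verify that this is \emph{exactly} the overlap between the two rerootings, so that inclusion--exclusion removes it once and yields $2t\,[y^2]\Dgf-t\,([y^1]\Dgf)^2$. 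Assembling the three cases then gives the displayed recursion and hence the lemma.
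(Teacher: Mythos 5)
Your overall strategy -- working directly with D-patches and decomposing by the inner face adjacent to the root edge -- is a legitimate alternative to the paper's route (the paper instead proves the analogous identity~\eqref{eq:C1} for C-patches, where the root vertex has a unique outer corner, and then transfers it to D-patches via Lemma~\ref{DfromC}). Your first two cases (single edge, digon) are correct. But the quadrangle case, which is where all the content lies, does not work as described. Write the root quadrangle as $v_0,v_1,w,u_1$, with $v_1,u_1$ adjacent to the root vertex $v_0$ and hence labelled $1$, so that $w$ is labelled $0$ or $2$. If $w$ is labelled $2$, deleting the root edge produces an outer face with labels $0,1,2,1$, which is \emph{not} a D-patch: the outer labels must alternate between $0$ and $1$. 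This subcase is simply absent from your argument; handling it requires the relabelling $\ell\mapsto 2-\ell$ (fixing the root vertex, which is legal precisely because all its neighbours are labelled $1$) that the paper uses in its fourth case.

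The bookkeeping in the subcase $w=0$ is also off. Deleting the root edge and rerooting at the surviving outer edge $e'$ is already a \emph{bijection} between these configurations and \emph{all} D-patches of outer degree $4$, so this subcase contributes $t[y^2]\Dgf$ once, not twice: your two rerootings of the deleted map and your two reattachment rules match up in pairs and reproduce the same patch $D$, so the ``2-to-1 bookkeeping'' double-counts every configuration, not just degenerate ones. The second copy of $t[y^2]\Dgf$ actually comes from the $w=2$ subcase after relabelling. Finally, the subtracted term is misattributed. The coincidence $u_1=v_1$ is harmless -- the paper's caption of Figure~\ref{fig:Dout2_cases} explicitly allows it inside cases 3 and 4 -- and it does not split the patch into two outer-degree-$2$ D-patches (the pinch there is at a label-$1$ vertex, not at the root). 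The true source of $-t([y^1]\Dgf)^2$ is that the relabelled $w=2$ subcase can never produce an outer-degree-$4$ D-patch whose two label-$0$ outer corners both lie at the root vertex; such ``pinched'' patches split at $v_0$ into an ordered pair of outer-degree-$2$ D-patches and are counted by $([y^1]\Dgf)^2$. (This is the D-patch shadow of the paper's case $u=v_0$, which contributes $+xt([y^1]\Cgf)^2$ on the C-patch side; the sign flips under the substitution of Lemma~\ref{DfromC} because $[y^2]\Dgf=[y^2]\Cgf+([y^1]\Cgf)^2$.) With these corrections your decomposition becomes $1+x[y^1]\Dgf+t[y^2]\Dgf+t\bigl([y^2]\Dgf-([y^1]\Dgf)^2\bigr)$ and the lemma follows, but as written the proposal has a genuine gap.
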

\begin{proof}
We will show that
\beq\label{eq:C1}
[y^{1}]\Cgf(t,x,y)=x\left(1+2t[y^2]\Cgf(t,x,y)+t([y^1]\Cgf(t,x,y))^2\right),
\eeq
from which the desired result follows using Lemma~\ref{DfromC}, while observing that $\Cgf(t,x,0)=0$.
Let~$C$ be any C-patch counted {by} $[y^{1}]\Cgf(t,x,y)$, that is, having
outer degree 2. Let $e$ be the root edge of $C$, let $e'$ be the other
outer edge of $C$ and let $v_{0}$ and $v_{1}$ be the root vertex and
co-root vertex respectively. We  consider four cases, illustrated
in Figure~\ref{fig:Dout2_cases}. 

In the first
case $e=e'$. Since the outer degree of $C$ is 2, this is only possible
if $e$ is the only edge in $C$, so this case simply contributes $x$ to
$[y^{1}]\Cgf(t,x,y)$. 
For the other three cases, let $Q$ be the map remaining when $e'$ is
removed (this is the shaded area in
  Figure~\ref{fig:Dout2_cases}). Then the outer degree of $Q$ must be
4, so that $Q$ is a quadrangulation. Let the vertices
around the outer face of~$Q$ be $v_{0}$, $v_{1}$, $u$ and $u_{1}$ in
anticlockwise order. Note that $v_{1}$ and $u_{1}$ must both be
labelled 1 since they are adjacent to $v_{0}$ and $C$ is a C-patch. 

The second case we consider is when $u=v_{0}$. Then $Q$ can
be separated into two C-patches with outer degree~2,
hence this case contributes 
\[xt([y^1]\Cgf(t,x,y))^2\]
to $[y^1]\Cgf(t,x,y)$. The factor $xt$ appears because  the number of inner quadrangles in $Q$ and the degree of
the root vertex of $Q$ are each one less than the equivalent numbers in $C$.

 The third case  is when $u\neq v_{0}$, but $u$ is
 labelled 0. {Then}  $Q$ can be any C-patch with outer degree 4.
Hence this case contributes
\[xt[y^2]\Cgf(t,x,y).\]

In the fourth and final case, $u$ is labelled $2$, and therefore it
cannot be equal to $v_{0}$. In this case $Q$ is not a patch
  because of this label $2$ on its outer face. But we construct a new map $Q'$
from $Q$ by replacing every label $\ell$ in $Q$ with $2-\ell$, except
for the label at the root vertex, which remains 0. Then $Q'$ is still
a labelled map,  all neighbours of the root vertex are
still labelled 1, and the root face is only incident once to
  the root vertex.
Hence, $Q'$ can be any C-patch with
outer degree~4,
so this case contributes
\[xt[y^2]\Cgf(t,x,y).\]

Adding the contributions from the four cases yields~\eqref{eq:C1},
which, in turn, yields the desired result using Lemma~\ref{DfromC}.
\end{proof}

\begin{figure}[ht]
\setlength{\captionindent}{0pt}
   \includegraphics[scale=0.7]{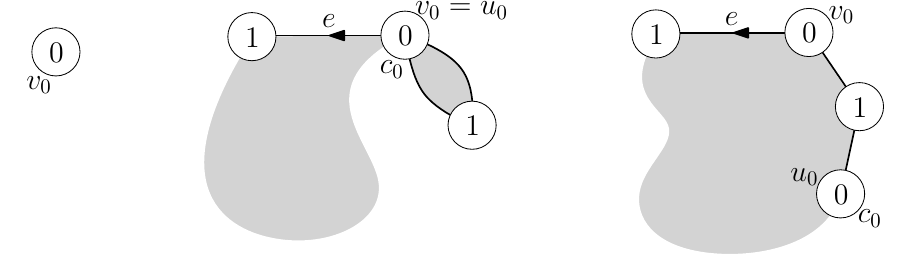} 
   \caption{The three different types of D-patches.
In the third case it is possible that the two displayed vertices labelled 1 are the same vertex.}
   \label{fig:Dproof_cases}
\end{figure}

In order to prove the most complex equation of our system,
\beq\label{third}
  \Dgf(t,x,y)=1+y\,
  [x^{\geq0}]\left(\Dgf(t,x,y)\left([y^{1}]\Dgf(t,x,y)+ \frac{1}{x}\Pgf\!\left(t,\frac{t}{x}\right)\right)\right),
\eeq
we will consider three  types of {D-patches},
illustrated in Figure~\ref{fig:Dproof_cases}, and we will enumerate the D-patches of each type separately. The first type  is just the atomic map, 
which contributes $1$ to $\Dgf(t,x,y)$. For any other D-patch $D$, let
$v_{0}$ be the root vertex, let 
$c_0$ be the outer  corner labelled $0$ that follows the root
corner clockwise   around the outer face, and let
$u_{0}$ be the vertex associated with $c_{0}$. We define
D-patches of type 2 as those that
satisfy $u_{0}=v_{0}$, while  D-patches of type 3 satisfy $u_{0}\neq v_{0}$.
\begin{Lemma} The contribution to $D(t,x,y)$ from  D-patches of type
  $2$ is given by
\[
y\left([y^1]\Dgf(t,x,y)\right)\Dgf(t,x,y).
\]\end{Lemma}
\begin{proof}The result follows from the fact that any  D-patch of
  type $2$
can be split into two D-patches at $v_{0}$ where one
has outer degree 2 and the other can be any D-patch.
\end{proof}
Note that this contribution can be written $y[x^{\ge0}]
  \left(\Dgf(t,x,y) [y^1]\,\Dgf(t,x,y)\right)$ as in~\eqref{third}. It remains to determine the contribution from  D-patches of type
$3$. 

\begin{Proposition} \label{prop:bij3} There is a bijection  between  D-patches 
   $D$  of type $3$  and pairs $(\notem{P},D')$ of a patch $P$ and a
  D-patch $D'$ such that the number of digons in $D'$ is larger than
  half the outer degree of $P$. More precisely, if $D$ has $n$ inner
quadrangles,  $d$ inner
  digons and outer degree~$2j$, 
\begin{itemize}
\item the total number of inner faces of $P$ and $D'$ is $n+d+1$,
\item the outer degree of $D'$ is $2j-2$,
\item the number  of inner digons in $D'$ is  $d+k+1$, where $2k$ is  the outer degree of $P$.
\end{itemize}
\end{Proposition}

Before proving the proposition, let us show that it completes the proof of~\eqref{third}.

\begin{Corollary}\label{Dequation}
The contribution to $D(t,x,y)$ from  D-patches of type
  $3$ is given by
\[
  \Dgf(t,x,y)=y\,
  [x^{\geq0}]\left(\frac{1}{x}\Dgf(t,x,y) \Pgf\!\left(t,\frac{t}{x}\right)\right).
\]
\end{Corollary}
\begin{proof}
We use the bijection of Proposition~\ref{prop:bij3}, and express the statistics of $D$ in terms of those
  of $M$ and $D'$:
\begin{itemize}
\item the outer degree of $D$ is the outer degree of $D'$ plus 2,
\item the number $d$ of inner digons in $D$ is the number of inner digons in $D'$,
  minus half the outer degree of $P$, minus $1$,
\item finally, the number of inner quadrangles of $D$ is the sum of the corresponding
  numbers in $P$ and $D'$, plus half the outer degree of $P$.
\end{itemize}
Hence, the contribution from  D-patches of type 3 is 
\[
\frac{y}{x} [x^{>
  0}]\left(\Dgf(t,x,y)\,\Pgf\!\left(t,\frac{t}{x}\right)\right)
= y\, [x^{\ge
  0}]\left(\frac{1}{x}\, \Dgf(t,x,y)\,\Pgf\!\left(t,\frac{t}{x}\right)\right).
\]
\end{proof}

To prove Proposition~\ref{prop:bij3},  we need to introduce 
minus-patches, subpatches and a contraction
operation. This contraction operation was already used in
\cite{elvey-guttmann17}, on a slightly different class of patches.

\begin{Definition}\label{def:minus-patch}
  A {\em minus-patch} is a
  map obtained from  a patch by replacing each label $\ell$ with~$-\ell$.
\end{Definition}
 Clearly these are equinumerous with patches. 
We now describe a way to extract a minus-subpatch from a  D-patch of
type $3$. This
definition is illustrated on the left of
Figure~\ref{fig:subpatch_and_contraction_example}. Recall the
  notation $c_0$ for the  outer corner labelled $0$ that follows the
  root corner in  clockwise order around the outer face, and $u_0$ for the associated vertex.

\begin{figure}[ht]
\setlength{\captionindent}{0pt}
   \includegraphics[scale=0.7]
{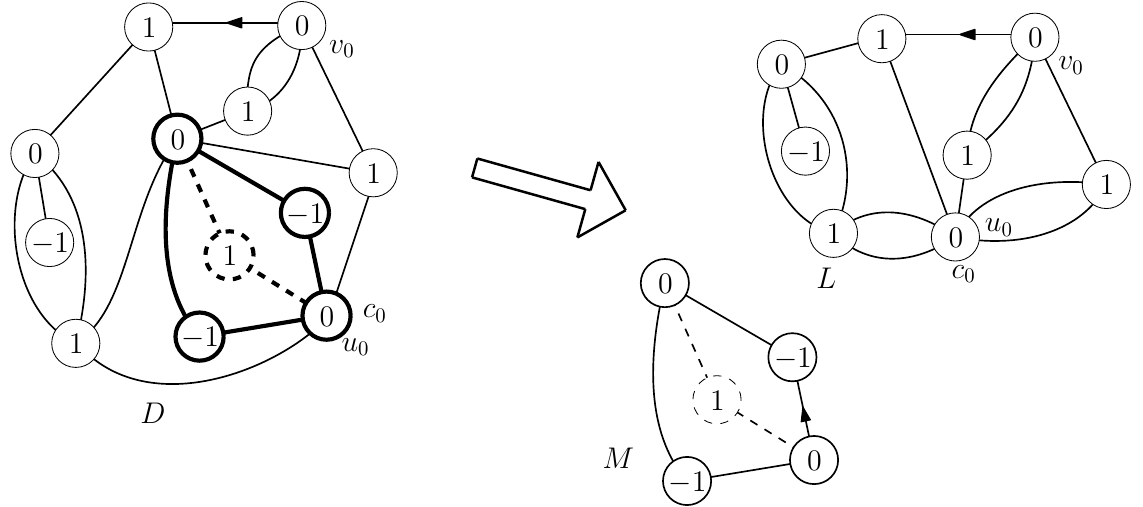} 
   \caption{Left: a  D-patch of type $3$. The minus-subpatch $M$ of
     $D$ is highlighted in     $D$ and shown separately in the
     middle. The submap $M'$ is       obtained from $M$ by deleting
     the {dashed vertex and edges}. Right: the labelled map     $L$
     constructed from $D$ by contracting $M$ to a single vertex
     $u_{0}$.} 
   \label{fig:subpatch_and_contraction_example}
\end{figure}

\begin{Definition} \label{def:minus-sub}
Let $D$ be a D-patch of type $3$. We define the {\em minus-subpatch
of $D$} as follows. First, let $M'$ be the maximal 
submap of $D$ that contains $u_{0}$ and consists of vertices
  labelled $0$ or less. Let $M$ be the submap of $D$ {that} contains $M'$
and all edges and vertices within its boundary (assuming the
  root face is drawn {as} the infinite face). The map $M$, which we root
at the corner inherited from $c_0$, 
is the {\em minus-subpatch of $D$}.
\end{Definition}
The following lemma justifies the terminology \emm minus-subpatch,.
\begin{Lemma}
The minus-subpatch of a  D-patch of type $3$ is a minus-patch.
\end{Lemma}
\begin{proof}
  In the above definition,
  it is clear that $M$ and $M'$ share
the same outer face. Moreover, all inner faces of $M$ are also inner faces
of $D$. Since the root vertex of $D$ is only adjacent to
vertices labelled 1, it cannot be a vertex of $M'$, so it
cannot be a vertex of $M$ either. Hence all inner faces of $M$ are
quadrangles, since all digons in $D$ are incident to its root. All outer vertices of $M$ must also be outer vertices of
$M'$, so they have non-positive labels. Let us prove that these labels
can only be $0$ and $-1$. For any outer vertex~$u$ of
$M$, there is some face $F$ of~$D$, containing~$u$, which is not
a face of $M$. If $F$ is
the outer face of $D$, with labels $0$ and $1$, then the label of~$u$,
being non-positive, can only be $0$.  The face $F$ cannot be a
  digon, otherwise $u$ would be the vertex labelled $0$ in this digon, and thus
  would be the
  root vertex of $D$, while we have shown that this vertex is not in
  $M$. Finally, if  $F$ is an inner quadrangle of~$D$,
 then it must contain a vertex $u'$
with label at least 1 (otherwise $F$ would be contained in $M$). Since
$u$ and~$u'$ are incident to the same quadrangle $F$, and $u$ has a
non-positive label, this  label can only be $-1$ or~$0$. Hence the outer
vertices of $M$ are all labelled $0$ or $-1$, so $M$ is a
minus-patch, {in the sense of Definition~\ref{def:minus-patch}}.
\end{proof}

The patch $P$ associated with a D-patch $D$ of type 3 in
Proposition~\ref{prop:bij3} will simply be obtained by negating the
labels in the minus-patch $M$.

  Let us now describe how $D'$ is constructed. Every edge in $D$ which connects a vertex in $M$
 to a vertex not in $M$ must have endpoints labelled 0 (in $M$) and 1
 (not in $M$). 
 We can thus contract all of $M$ to
 a single vertex  labelled~0, still denoted $u_0$, to form a new labelled map~$L$
(Figure~\ref{fig:subpatch_and_contraction_example}, right). 
The vertex $u_{0}$ is still distinct from the root vertex
$v_0$. Finally, we move $u_{0}$ towards  $v_{0}$ in the outer face of $L$
until these two vertices merge into a new root vertex $w_0$
(Figure~\ref{fig:Dproof_example}). This creates an extra inner digon at $w_0$, in addition to those that were
incident to $u_0$ and $v_0$. Note that we do not merge
  any edges. This gives a new labelled map, denoted $D'$.

\begin{figure}[ht] 
\setlength{\captionindent}{0pt}
   \includegraphics[scale=0.7]{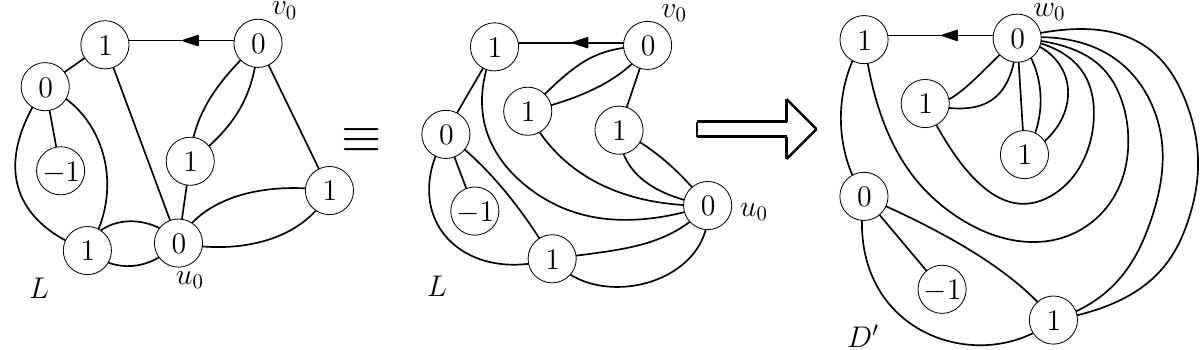} 
   \caption{The transformation of $L$ into a D-patch $D'$.}
   \label{fig:Dproof_example}
\end{figure}

\begin{Lemma}\label{lem:D}
  The labelled map $D'$ obtained by the above construction is a
  D-patch. If $D$ has  $d$ inner digons, outer degree $2j$, and its minus-patch $M$ has
  outer degree $2k$, then $D'$ has $d+k+1$ inner digons and outer
  degree $2j-2$. Finally, $D'$ and $M$ have together one more finite
  face than~$D$.
\end{Lemma}
\begin{proof}
Since $L$ is obtained by contracting the submap $M$ into a single
vertex, its inner faces cannot be bigger than the inner faces of
$D$. Hence they are quadrangles or digons. Moreover, all digons are
attached either to $v_0$ (as in $D$), or to $u_0$ (because they
result from the contraction of two edges of an inner
quadrangle). Hence, once $u_0$ and $v_0$ are merged to form the map
$D'$, all digons are incident to the new root vertex $w_0$. Finally,
all neighbours of $v_0$ in $L$ are labelled~$1$ (as in $D$), and the
same holds for all neighbours of $u_0$, because they were neighbours
of $M$, and all edges joining a vertex in $M$ to a vertex not in $M$
join label $0$ to label $1$. Hence, in $D'$, the root vertex is only
adjacent to vertices labelled $1$, and $D'$ is a D-patch.

Let us now prove the statements dealing with the statistics. Clearly, no outer edge of $D$ lies in $M$, hence
the outer degrees of $L$ and $D$ are the same. The transformation of
$L$ into $D'$ reduces the outer degree by $2$.
The statement involving the number of finite faces is also clear:
every finite face of $D$  results in a finite face of $M$ or $L$,
and transforming $L$ into $D'$ creates a new inner digon. The number
of inner digons in $D'$ is $d+k+1$, where $d$ is the number of inner
digons in~$D$, and~$k$ is the number of inner
digons attached to the vertex $u_0$ in the contracted map $L$. 
We claim that $k$ is also half the outer degree of $M$.  This comes from the fact that, from every  corner~$c$ labelled~$0$ on the
   outer face of $M$, there must start (in $D$) {at least one} edge ending at a vertex
   labelled~$1$.  Otherwise, the face of $D$ that contains $c$
   would contain two corners labelled $-1$ and would have degree larger than
   $4$, which is impossible.  Hence every pair of two consecutive
   outer edges of $M$ with labels $0, -1, 0$ occurs in a unique quadrangle,
   outside $M$, and this quadrangle will be contracted to form a digon of~$L$ adjacent
   to $u_0$.
\end{proof}

  \begin{proof}[Proof of Proposition~\ref{prop:bij3}]

    Starting from a D-patch $D$ of type 3, we construct the
    minus-patch $M$, the labelled map $L$ and the $D$-patch $D'$ as
    described above. We take for $P$ the patch obtained by negating
    labels in $M$. The statements that deal with the statistics of
    $D$, $M$ (or $P$) and $D'$ then follow from Lemma~\ref{lem:D}. We denote $f(D)=(P,D')$.

  Conversely, we need to show how to construct a D-patch $D$ of type
  $3$ from a pair $(P,D')$
  satisfying the conditions of the proposition.  Let $2k$
  and $2j-2$ be the outer degrees of $P$ and~$D'$,
  respectively, and let $d'$ be the number of digons of
  $D'$.
  Define $d:=d'-k-1$. This is
    non-negative by the assumption  $d'>k$. Split the
    root vertex $w_0$ of $D'$ into two vertices $v_0$ and $u_0$ so
    that exactly $d$ digons remain attached to $v_0$, and $k$ to
    $u_0$. This gives a labelled map $L$, in which all digons are
    attached to the root vertex $v_0$ or to $u_0$. Moreover, all
    neighbours of these vertices are labelled~$1$. The outer degree of
    $L$ is $2j$, and it has one inner face less than $D'$.

    Next, we negate the labels of the patch $P$ to obtain a minus-patch $M$. We now want to insert $M$  at the vertex $u_0$ in $L$ to construct a D-patch $D$.
  If $M$   is atomic, then we take $D=L$.  Otherwise, let
$c_{0}$ be the outer corner labelled $0$ following the root
  corner in clockwise order around the outer face in $L$. The vertex at this corner is $u_0$.
Roughly speaking, we need to place the root corner of $M$ at $c_0$, and to distribute the edges attached to $u_0$ in $L$ around the minus-patch $M$.
Let $e_1, e_2, \ldots, e_\delta$ be the
edges of $L$ attached to $u_0$, in anticlockwise order 
starting from the corner $c_{0}$
(Figure~\ref{fig:decontract}).
We now  erase the vertex $u_0$  from $L$, so that
the half-edges $e_i$ are dangling. We connect them to the 
outer corners of $M$ labelled $0$ in the following way: we first attach $e_1$ to
the root corner of $M$, and then proceed  anticlockwise around
$M$, connecting $e_{i+1}$ to the next corner of $M$ labelled
  $0$ if the corner of $L$ at $u_0$ defined by  $e_i$ and $e_{i+1}$ belongs to
  an inner digon of $L$ (this creates a new quadrangle), and to the
  same corner as $e_i$ otherwise. Recall that $M$ has outer degree $2k$, so it
  has $k$ corners labelled $0$, which is the same as the number of
  digons incident to $u_{0}$ in $L$. Hence this construction connects
  the final edge $e_{\delta}$ to the root corner of $M$, and we thus obtain a map $D$, which we
  define to be $g(P,D')$. Note also that 
  all vertices of $M$ labelled $-1$ end up on the interior of $g(P,D')$, away
  from the outer face.

\begin{figure}[htb]
  \centering
   \scalebox{0.9}{\input{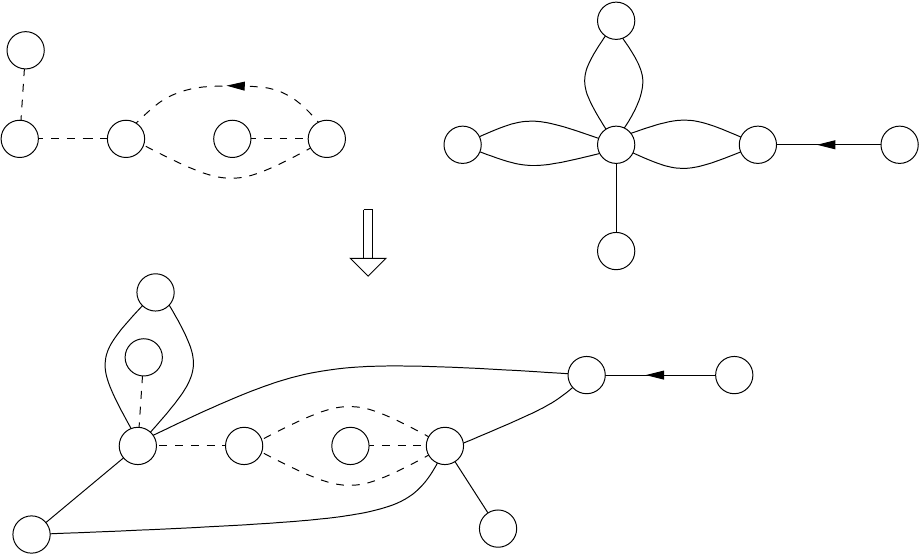_t}}
  \caption{How to reconstruct the  D-patch $D$ from the
    minus-patch $M$ (dashed edges) and the map
    $L$. Here, $M$ has outer degree $2k=6$,  the vertex
    $u_0$ of $L$ has degree $\delta=7$ and is incident to $k=3$ digons.}
  \label{fig:decontract}
\end{figure}

  Let us explain why $D$ is a D-patch of type 3. It is clearly a labelled map, and zeroes and ones alternate on its outer face (as in $L$). When inserting $M$ in $L$, we have transformed every inner digon that was incident to $u_0$ in $L$ into an inner quadrangle: hence all inner faces of $D$ have degree 2 or 4. Finally, all neighbours of $v_0$ in $D$ are labelled $1$, as in $D'$ and $L$. Hence $D$ is a D-patch. Since we have split the vertex $w_0$ into two distinct vertices $v_0$ and $u_0$, it has type 3.

  Note that  $M$ is the minus-subpatch of $D$: indeed, it is a minus-patch, it contains $u_0$, and it is only connected to the rest of $D$ by edges labelled 0 at one end (in $M$) and 1 at the other end (out of $M$). This is the key point in proving that $f\circ g(P,D')= (P,D')$.

  Finally, to prove that $g\circ f(D)=D$ for any D-patch D of type 3,
    it suffices to observe that in the application of $g$, our choices for where to attach the edges $e_{1},e_{2},\ldots,e_{\delta}$ (Figure ~\ref{fig:decontract}) are the only choices that ensure that the resulting map is a D-patch in which $c_{0}$ is contained in the root corner of $M$. Indeed, the condition on the root corner of $M$ forces $e_{1}$ to be attached to this corner, while the rest of the choices are then forced by requirement that the inner faces of $g(P,D')$ that are incident to $u_{0}$ must be quadrangles. Hence,
  when applying $g$ to $(P,D')=f(D)$, we must obtain the map $D$.
\end{proof}

\begin{Lemma}\label{lem:Qgf}
The generating function $\Qgf(t)$ is given by 
\[\Qgf(t)=[y^1]\Pgf(t,y)-1.\]
\end{Lemma}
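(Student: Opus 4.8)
The plan is to prove the identity by a direct bijection, recalling (from the discussion preceding Theorem~\ref{thm:systemQ}, via Lemma~\ref{lem:duality}) that $\Qgf(t)$ is the generating function of labelled quadrangulations counted by faces, while $[y^1]\Pgf(t,y)$ is the generating function of patches of outer degree~$2$ counted by their number of inner quadrangles (the variable $y$ recording half the outer degree, so $[y^1]$ selects outer degree~$2$). It then suffices to match labelled quadrangulations with $n$ faces to patches of outer degree~$2$ with $n$ inner quadrangles, correcting for one exceptional patch that accounts for the constant $-1$.

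First I would analyse the boundary of a patch $P$ of outer degree~$2$. By Definition~\ref{def:patches} its outer face is a digon whose two vertices are the root vertex $v_0$, labelled $0$, and the co-root vertex $v_1$, labelled $1$, joined by the root edge $e$ and one further outer edge $e'$. The map obtained by identifying $e$ and $e'$ (equivalently, by contracting the outer digon) is then a labelled map all of whose faces are quadrangles: the inner quadrangles of $P$ are untouched, and the only other face, the outer digon, disappears. Rooting this map at the image of $e$, still oriented from $0$ to $1$, yields a labelled quadrangulation; the condition that adjacent labels differ by $\pm1$ and the orientation of the root edge are clearly preserved, and the inner quadrangles of $P$ are in bijection with the faces of the resulting quadrangulation, so the weight $t^n$ is respected.

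The inverse operation takes a labelled quadrangulation and splits its root edge into two parallel edges enclosing a new digon, which becomes the outer face; this is manifestly well defined and reversible, so the two constructions are mutually inverse whenever $P$ has at least one inner quadrangle. The only patch of outer degree~$2$ with no inner quadrangle is the single edge from $0$ to $1$ (for which $e=e'$ and the outer boundary traverses the unique edge twice); it contributes $t^0=1$ to $[y^1]\Pgf(t,y)$ and has no counterpart among nonempty quadrangulations, which is exactly what the term $-1$ removes. The main point to verify carefully is that the edge-identification always produces a genuine quadrangulation — that no face of degree other than~$4$ is created and that the construction is reversible — together with the clean isolation of the single-edge patch as the unique exception; once these are checked, summing over $n$ gives $\Qgf(t)=[y^1]\Pgf(t,y)-1$.
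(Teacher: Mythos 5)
Your proof is correct and is essentially the paper's argument: the paper describes the same bijection in the opposite direction, starting from a labelled quadrangulation $Q$ and adding a parallel edge $e'$ next to the root edge to form a patch of outer degree~$2$, whereas you contract that digon. Both handle the single-edge patch as the unique exception accounting for the $-1$.
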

\begin{proof}
Let $Q$ be any labelled quadrangulation. The outer face may
  contain a label $-1$ or $2$, hence $Q$ is not necessarily a patch.  Let $P$ be the map
constructed from $Q$ by adding an edge~$e'$ between the root
vertex and co-root vertex in the outer face of $Q$, so that
$e'$ and the root edge~$e$ are the only outer edges of $P$. Then
$P$ can be any patch with outer degree 2, except for the patch with
only one edge. Hence the possible patches $P$ are counted by
$([y^1]\Pgf(t,y)-1)$. Since the number of inner faces of $P$ is equal
to the total number of faces of $Q$, this expression is exactly equal to $\Qgf(t)$. This concludes the proof.
\end{proof}

\begin{proof}[Proof of Theorem~\ref{thm:systemQ}.] We have now proved
  the five functional equations. It remains to prove that, together
  with  the conditions on the rings that contain $\Pgf$, $\Cgf$ and
  $\Dgf$, they  determine these three series.  Let us denote by $p_{j,n}$ the coefficient of $y^j t^n$ in
$\Pgf(t,y)$, and similarly for $\Cgf$ and $\Dgf$. These quantities
should be thought of respectively as elements of $\qs$ (for $\Pgf$),
of $\qs[x]$ (for $\Cgf$) and of $\qs[[x]]$ (for $\Dgf$).  We will prove by
induction on $N\ge 0$ that
\begin{itemize}
\item $p_{j,n}$ is completely determined for $j+n<N$,
\item $c_{j,n}$ and $d_{j,n}$ are completely determined for $j+n \le N$.
\end{itemize}
When $N=0$, there is nothing to prove for $\Pgf$. The third equation
of the system shows that $\Dgf-1$ is a multiple of $y$. That is, not
only $d_{0,0}=1$, but in fact we also know that $d_{0,n}=0$ for
$n\ge 1$. The
second equation then tells us that $\Cgf$ is a multiple of $y$, so
that $c_{0,n}=0$ for $n\ge 0$.
Now assume that the induction hypothesis holds for some $N\ge 0$, and let us prove it for
$N+1$. 

We begin with the series $\Dgf$. Of course it suffices to
determine the coefficients $d_{j,n}$ for $j+n=N+1$. We have already explained that
$d_{0,N+1}=0$, so we take $j\ge 1$. The third equation of the system
expresses $d_{j,N+1-j}$ in terms of the series $d_{j-1,m}$ (for $m\le
  N+1-j$), $p_{k, \ell}$ (for $k+\ell \le N+1-j$) and $d_{1,m}$ (for
$  m\le N+1-j$). If $j\ge 2$, these series are known, by the induction
hypothesis, and thus $d_{j,N+1-j}$ is completely determined. As argued
below Theorem~\ref{thm:systemQ}, it belongs to $\qs[[x]]$. To
determine the final coefficient $d_{1,N}$, we resort to the fourth
equation, which  expresses  $d_{1,N}$ in terms of $d_{2,N-1}$ (which we have
just determined) and the series $d_{1,m}$ for $m\le N-1$ (which are
known by the induction hypothesis). Again, $d_{1,N}$ belongs to $\qs[[x]]$.

Hence  for $j+n\le N+1$, the coefficients $d_{j,n}$ are uniquely
determined and hence must count D-patches with outer degree $2j$ and
$n$ quadrangles. Since we know that the \gfs\ of C-patches and
D-patches are related by the second equation (see Lemma~\ref{DfromC}), this
forces the coefficients $c_{j,n}$, for $j+n\le N+1$, to count
C-patches. Hence they are also fully determined (and are \emm polynomials, in
$x$). Finally, the first equation of the system shows that the numbers
$p_{j,n}$ are also determined for $j+n \le N$ (we cannot go up to
$N+1$ because of the division by $y$).

This concludes our induction.
\end{proof}

\section{Solution for quartic Eulerian orientations}
\label{sec:sol4}
We are now about to solve the system of Theorem~\ref{thm:systemQ},
thus proving, in particular, that the \gf\ $\Qgf(t)$ of quartic
Eulerian orientations is indeed given by Theorem~\ref{thm:4}. The
third equation of the system suggests {that we should} consider the series
$\Pgf(t,ty)$ rather than $\Pgf(t,y)$. In turn, this leads {us} to
apply the same transformation to the series $\Cgf$ and $\Dgf$. More
precisely, let us consider
\beq\label{natural-nn}
\Pnn(t,y)=t\,\Pgf(t,ty), \qquad \Cnn(t,x,y)=\Cgf(t,x,ty), \qquad
\Dnn(t,x,y)=\Dgf(t,x,ty).
\eeq
Of course, if we determine $\Pnn$, $\Cnn$ and $\Dnn$, then $\Pgf, \Cgf$
and $\Dgf$ are completely determined as well.

The solution below has been \emm guessed,, and then of course checked. The first
  step was the discovery of the connection between the \gf\ $\Qgf$ and the
  series $\Rgf$ coming from~\cite{mbm-courtiel}.  Next, writing the auxiliary series $\Pnn(t,y)$, $\Cnn(t,x,y)$ and $\Dnn(t,x,y)$ as series in $\Rgf$, $x$ and $y$, we noticed that the coefficients of $\Pnn(t,y)$ were simple products of binomial coefficients. Next, by chance we found that the series $\Dnn(t,0,1)$ appeared in the On-line Encyclopedia of Integer Sequences as the exponential of a much nicer sequence~\cite[A229452]{oeis}, so we tried taking the log of $\Dnn(t,x,y)$. We were pleasantly surprised to see that $\log(\Dnn(t,x,y))$ had very nice coefficients when written as a series in $\Rgf$, $x$ and $y$, which allowed us to guess its exact form as well as that of $\Cnn(t,x,y)$. To our knowledge,
  this is the first time that series of this form
   appear in   combinatorial enumeration.

\begin{Theorem}\label{thm:solQ}
Let $\Rgf(t)\equiv \Rgf$ be the unique formal power series with constant
  term $0$ satisfying
\beq\label{def:RQ}
t= \sum_{n \ge 0} \frac 1 {n+1} {2n \choose n}{3n \choose n} \Rgf^{n+1}.
\eeq
  Then the above series $\Pnn$, $\Cnn$ and $\Dnn$ are:
\beq \label{P-expr}
\Pnn(t,y)=\sum_{n\ge 0}\sum_{j=0}^n \frac 1{n+1} {2n-j\choose n}{3n-j
  \choose n} y^j \Rgf^{n+1},
\eeq
\[
\Cnn(t,x,y) =1 -\exp\left( 
- \sum_{n\ge 0} \sum_{j=0}^n \sum_{i=0}^{2n-j}
  \frac 1 {n+1} {2n-j \choose n} {3n-i-j\choose n}  x^{i+1}  y^{j+1}\Rgf^{n+1} \right),
\]
\beq\label{D-expr}
\Dnn(t,x,y) =\exp\left( \sum_{n\ge 0} \sum_{j=0}^n \sum_{i\ge0}
  \frac 1 {n+1} {2n-j \choose n} {3n+i-j+1\choose 2n-j}  x^i   y^{j+1}\Rgf^{n+1}\right).
\eeq
The \gf\ of quartic Eulerian orientations, counted by vertices, is
\[
\Qgf(t)= \frac 1{3t^2}\left( t-3t^2-\Rgf(t)\right).
\]
\end{Theorem}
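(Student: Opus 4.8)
The plan is to solve the system of Theorem~\ref{thm:systemQ} by a \emph{guess-and-check} strategy, after first applying the normalisation~\eqref{natural-nn} that replaces $(\Pgf,\Cgf,\Dgf)$ by $(\Pnn,\Cnn,\Dnn)$. The guessing step (already carried out by the authors, and which I would reconstruct from the data by computing enough coefficients and recognising the binomial patterns) produces the explicit closed forms~\eqref{P-expr}--\eqref{D-expr} in terms of the algebraic series $\Rgf$ defined by~\eqref{def:RQ}. Since Theorem~\ref{thm:systemQ} guarantees that the system has a \emph{unique} solution in the prescribed rings, the entire task reduces to \emph{verification}: I need only check that the proposed series $\Pnn$, $\Cnn$, $\Dnn$ satisfy the five functional equations (rewritten in the $\Pnn,\Cnn,\Dnn$ variables), and that they lie in the correct rings. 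Once this is done, the formula for $\Qgf(t)$ follows from $\Qgf=[y^1]\Pgf-1$ (Lemma~\ref{lem:Qgf}): extracting the coefficient of $y^1$ in~\eqref{P-expr} and unwinding the substitution $y\mapsto ty$ gives a sum over $n$ that telescopes against the defining relation~\eqref{def:RQ} for $\Rgf$, yielding $\Qgf(t)=\frac{1}{3t^2}(t-3t^2-\Rgf)$.

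The heart of the verification is algebraic manipulation of the three exponential/logarithmic generating functions. The relation $\Dgf=1/(1-\Cgf(t,1/(1-x),y))$ (Lemma~\ref{DfromC}) becomes, after taking logarithms, a statement that $\log\Dnn$ equals $-\log(1-\Cnn)$ evaluated at the shifted argument; concretely I would verify that the triple sum in the exponent of $\Dnn$ is obtained from that of $1-\Cnn$ by the substitution $x\mapsto 1/(1-x)$ followed by extracting the appropriate part, which at the level of the summand amounts to the binomial identity relating $\binom{3n-i-j}{n}$ summed against powers of $1/(1-x)$ to $\binom{3n+i-j+1}{2n-j}$. The first equation $\Pgf=\frac1y[x^1]\Cgf$ translates into reading off the coefficient of $x^1$ in $\Cnn$ (i.e.\ the $i=0$ term), which matches~\eqref{P-expr} directly. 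The most delicate checks are the two ``dynamical'' equations: the initial condition (Lemma~\ref{Dout2}), which is quadratic in $\Dgf$, and above all the main equation of Lemma~\ref{Dequation}, which involves the non-negative-part operator $[x^{\ge 0}]$, the composition $\Pgf(t,t/x)$, and a product of series.

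The main obstacle will be verifying the equation of Lemma~\ref{Dequation}. The difficulty is threefold: one must correctly handle the $[x^{\ge 0}]$ extraction acting on a product where $\Pnn(t,t/x)$ contributes negative powers of $x$ through the substitution $x\mapsto t/x$; one must track how the factor $t$ hidden in that substitution interacts with the powers of $\Rgf$; and one must show that the resulting convolution of binomial coefficients reproduces exactly the summand of $\log\Dnn$ after exponentiation. I expect the cleanest route is to work \emph{at the level of the logarithms}: differentiating or taking $\log$ turns the exponential ansatz into additive identities, so that the product $\Dgf\cdot(\tfrac1x\Pgf(t,t/x)+[y^1]\Dgf)$ can be analysed term by term and the whole equation reduces to a family of hypergeometric-type binomial identities, one for each pair $(i,j)$ of exponents. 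These identities are of Vandermonde/Chu type and, while individually routine, must be assembled carefully so that the $[x^{\ge 0}]$ truncation lines up with the range $i\ge 0$ in~\eqref{D-expr}; getting the bookkeeping of indices and the interaction with the defining equation of $\Rgf$ exactly right is where the real work lies. Uniqueness from Theorem~\ref{thm:systemQ} then closes the argument with no further analytic input.
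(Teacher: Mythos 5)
Your overall architecture matches the paper's: guess the closed forms, invoke the uniqueness statement of Theorem~\ref{thm:systemQ}, verify the equations in the normalised variables $\Pnn,\Cnn,\Dnn$, and extract $\Qgf$ from $[y^1]\Pnn$ using the defining relation of $\Rgf$. The first two equations and the initial condition go through essentially as you describe (the second is exactly the identity~\eqref{not-chu}, and the initial condition reduces, via $\Cnn=1-\exp(-T)$, to $[y^1]T-2x[y^2]T=tx$, which is the definition of $\Rgf$).

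The gap is in your plan for the central equation~\eqref{eq:D}. You propose to take logarithms and reduce to ``one Vandermonde/Chu-type identity for each pair $(i,j)$''. This cannot work as stated: the equation has the form $\Dnn=1+y\,[x^{\ge0}]\bigl(\Dnn\cdot(\cdots)\bigr)$, and neither the additive $1$ nor the truncation $[x^{\ge0}]$ commutes with $\log$; moreover the coefficient of $x^iy^j$ in $\Dnn$ is not a single binomial but a full exponential of a triple sum (a sum over multisets of the exponent's terms), so coefficient-by-coefficient comparison does not yield closed Chu--Vandermonde identities. The paper's verification hinges on three ideas your proposal does not supply: (i) summing the inner $i$-sum of $\log\Dnn$ in closed form so that $\log\Dnn=A(\Ugf,z)-B(\Rgf,1/x,y)$ with $\Ugf=\Rgf/(x(1-x)^2)$, $z=(1-x)y$, and $B$ containing only negative powers of $x$; (ii) the observation that $\frac1x\Pnn(t,1/x)+[y^1]\Dnn=(1-x)\,\Ugf\Cat(\Ugf)$, so the bracketed factor collapses; (iii) the Lagrange-inversion identity $\exp(A(u,z))\left(1-zu\Cat(u)\right)=1$ of Lemma~\ref{lem:Cat}, after which the whole expression inside $[x^{\ge0}]$ becomes $\exp(-B)$, whose non-negative part in $x$ is $1$. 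Without some substitute for this multiplicative collapse, the verification of~\eqref{eq:D} does not go through. (A minor point: $\Rgf$ is not algebraic --- it is the compositional inverse of a transcendental hypergeometric series --- though nothing in your argument depends on that.)
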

\begin{proof}
We take for $\Pnn$, $\Cnn$ and $\Dnn$ the above series, and define
$\Pgf$, $\Cgf$ and $\Dgf$ by~\eqref{natural-nn}. Since $\Rgf=O(t)$,
these three series are easily seen to belong respectively to the rings
$\qs[[y, t]]$, $\qs[x][[y,t]]$ and $\qs[[x,y,t]]$, as required by
Theorem~\ref{thm:systemQ}. Thus it suffices to check that the first four
equations of Theorem~\ref{thm:systemQ} hold, or, equivalently, that
  \begin{align}
\Pnn(t,y)&=\frac{1}{y}[x^1]\Cnn(t,x,y), \nonumber\\
\Dnn(t,x,y)&=\frac{1}{1-\Cnn\left(t,\frac{1}{1-x},y\right)}, \nonumber\\
\Dnn(t,x,y)&=1+y\, [x^{\geq0}]\left(\Dnn(t,x,y)\left(\frac{1}{x}\Pnn\left(t,\frac{1}{x}\right)+[y^{1}]\Dnn(t,x,y)\right)\right), \label{eq:D}
\\
[y^{1}]\Dnn(t,x,y)&=\frac{1}{1-x}\left(t+2[y^2]\Dnn(t,x,y)-([y^1]\Dnn(t,x,y))^2\right). \nonumber
\end{align}
Note that the first three equations do not involve {explicitly} the
variable $t$:
we will prove them without
resorting to the definition~\eqref{def:RQ} of $\Rgf$. 

The first equation is straightforward. For the second one, it suffices
to prove that for all $j \le n$,
\[
\sum_{i=0}^{2n-j} {3n-i-j\choose n} \frac 1{(1-x)^{i+1}}
= \sum_{i\ge 0} {3n+i-j+1\choose 2n-j} x^i.
\]
This follows by expanding the left-hand side in $x$ and
using the classical  identity, taken for $k=3n-j$:
\beq\label{not-chu}
\sum_{i=0}^{k-n}{k-i \choose n}{\ell+i\choose \ell}={k+\ell+1\choose n+\ell+1}={k+\ell+1\choose k-n}.
 \eeq

We now come to the third, and most interesting,
equation. Our first observation is that, in the expression~\eqref{D-expr} of $\Dnn(t,x,y)$,
the sum over $i$ is a rational function of $x$:
 \begin{align}
   \sum_{i\ge 0} {3n+i-j+1  \choose 2n-j} x^i 
& = \sum_{k\ge n+1}   {2n-j+k \choose 2n-j} x^{k-n-1} \nonumber
\\
&= \frac 1{x^{n+1} (1-x)^{2n-j+1} }- \sum_{\ell=0}^n {3n-\ell-j\choose
  2n-j} \frac1{x^{\ell+1}}.\label{id:ratQ}
 \end{align}
We note that the sum over $\ell$ {in the above expression} is a polynomial in
$1/x$, with no constant term. Let us denote it by
$L_{j,n}(1/x)$. The expression of
$\Dnn$ thus reads
\begin{align}
  \Dnn(t,x,y)&= \exp\left( \sum_{n\ge 0} \sum_{j=0}^{n} \frac 1 {n+1}
  {2n-j \choose n} y^{j+1} \Rgf^{n+1} \left( \frac 1{x^{n+1}
      (1-x)^{2n-j+1} }-L_{j,n}(1/x)\right)\right) \nonumber \\
&= \exp\big( A( \Ugf, z)-B(\Rgf,1/x,y)\big),\label{Dnn-expr}
\end{align}
where
\[
\Ugf= \frac{\Rgf}{x(1-x)^2}, \qquad z=(1-x)y,
\]
\beq\label{A:def}
 A( u, z)=
\sum_{n\ge 0} \sum_{j=0}^n \frac 1 {n+1} {2n-j\choose n} 
z^{j+1}u^{n+1}
\eeq
and
\[
B(r,1/x,y)=\sum_{n\ge 0} \sum_{j=0}^n \frac
  1 {n+1}{2n-j\choose n}L_{j,n}(1/x) y^{j+1} r^{n+1}.
\]
 By extracting  the coefficient of $y$ from~\eqref{Dnn-expr}, we find
\begin{align}
  [y^1]\Dnn(t,x,y)&= (1-x) \sum_{n\ge 0} \frac 1{n+1} {2n\choose n} \Ugf^{n+1}- \sum_{n\ge 0} \frac
  1 {n+1}{{2n\choose n}}L_{0,n}(1/x) \Rgf^{n+1},\nonumber
\\
&=(1-x) \,\Ugf \Cat(\Ugf) -\frac 1 x \, \Pnn\left(t,\frac 1 x\right),\label{id:PQ}
\end{align}
where $\Cat(u)$ is the Catalan series $\sum {2n \choose n
}\frac{u^n}{n+1}$ {and $\Pnn$ is given by~\eqref{P-expr}} (we have used the fact that ${2n\choose
  n}{3n-\ell\choose 2n}= {2n-\ell \choose n}{3n-\ell \choose n}$).
The identity~\eqref{eq:D} that we have to prove thus reads
\[
1=[x^{\ge 0}] \big( \Dnn(t,x,y) (1-z\Ugf \Cat(\Ugf))\big), 
\]
where we still denote $z=(1-x)y$. Equivalently, in view of~\eqref{Dnn-expr}:
\[
1= [x^{\ge 0}] \Big(
\exp(A(\Ugf,z))  \left(1-z\Ugf \Cat(\Ugf)\right)
\exp\left(- B\big(\Rgf, 1 /x ,y\big)\right)
\Big).
\]
We will prove below in Lemma~\ref{lem:Cat} that 
\[
\exp(A(\Ugf,z))  \left(1-z\Ugf \Cat(\Ugf)\right)=1,
\]
which, given that $B(\Rgf,1/x,y)$ only involves negative powers of $x$,
concludes the proof of the third identity.

Consider now the fourth equation of the system.  Given that
the second equation holds, what we need to prove can be rewritten as:
\[
[y^{1}]\Cnn(t,x,y)=x\left(t+2[y^2]\Cnn(t,x,y)+([y^1]\Cnn(t,x,y))^2\right).
\]
Let us write $\Cnn(t,x,y)=1-\exp(-T(t,x,y))$. Then the above identity
reads:
\[
[y^1]T(t,x,y) -2x [y^2]T(t,x,y)=tx.
\]
A direct calculation gives
\[
[y^1]T(t,x,y) -2x [y^2]T(t,x,y)=x \sum_{n\ge 0} \frac 1 {n+1}{2n \choose
  n}{3n \choose n} \Rgf^{n+1},
\]
which is precisely $xt$, by definition~\eqref{def:RQ} of the series $\Rgf$.

\medskip
We have thus proved the announced expressions of the series $\Pnn$,
$\Cnn$ and $\Dnn$, which in turn characterise the \gfs\ $\Pgf$,
$\Cgf$ and $\Dgf$ of patches of various types (see~\eqref{natural-nn}). We still have to
express the \gf \ $\Qgf(t)$ of quartic Eulerian orientations in terms
of $\Rgf(t)$.  The last equation of Theorem~\ref{thm:systemQ} now
reads
\begin{align*}
  \Qgf(t)&=\frac{1}{t^2}[y^1]\Pnn(t,y) -1
\\
&=\frac 1 {t^2} \sum_{n\ge 1} \frac 1 {n+1} {2n-1\choose
  n}{3n-1\choose n} \Rgf^{n+1} -1 
\\
&=\frac 1 {3 t^2} \sum_{n\ge 1} \frac 1 {n+1} {2n\choose
  n}{3n\choose n }\Rgf^{n+1} -1 
\\
&= \frac 1 {3 t^2}\left( t-\Rgf-3t^2\right)
\end{align*}
by definition of $\Rgf$.
\end{proof}
It remains to prove the following lemma, used in the above proof.
\begin{Lemma} \label{lem:Cat}
  For any indeterminates $u$ and $z$, the Catalan series
  $\Cat(u)=\sum_{n\ge 0}{2n\choose n} u^n/(n+1)$ and the
  series $A(u,z)$ defined by~\eqref{A:def} are related by:
\[
\exp(A(u,z))  \left(1-zu \Cat(u)\right)=1.
\]
\end{Lemma}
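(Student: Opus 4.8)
The plan is to pass to logarithms and reduce everything to a single elementary binomial identity. Since $A(u,z)$ has zero constant term, $\exp(A(u,z))$ is a well-defined element of $\qs[[u,z]]$ with constant term $1$; likewise $1-zu\Cat(u)$ has constant term $1$, with lowest nonconstant term $-zu$, so both it and its inverse lie in $1+(u,z)\qs[[u,z]]$. Hence the claimed identity $\exp(A(u,z))\left(1-zu\Cat(u)\right)=1$ is equivalent to
\[
A(u,z)=-\log\big(1-zu\Cat(u)\big)=\sum_{k\ge 1}\frac 1 k\big(zu\Cat(u)\big)^k,
\]
and it suffices to match the two sides coefficient by coefficient in $\qs[[u,z]]$.

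Next I would insert the classical closed form for the powers of the Catalan series. Writing $C=\Cat(u)$, the relation $C=1+uC^2$ means that $V:=C-1$ satisfies $V=u(1+V)^2$, i.e. $u=V(1+V)^{-2}$, so Lagrange inversion applied to $H(V)=(1+V)^k=C^k$ yields, for every $k\ge 1$,
\[
C^k=\sum_{n\ge 0}\frac{k}{2n+k}{2n+k\choose n}u^n.
\]
Substituting this into the logarithm and grouping by the total power $N:=n+k$ of $u$ (so $n=N-k\ge 0$, i.e. $1\le k\le N$), the coefficient of $z^k u^N$ on the right-hand side comes out to be $\frac{1}{2N-k}{2N-k\choose N-k}$.

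Then I would compare with the defining double sum of $A$. Setting $k=j+1$ and $N=n+1$ in $A(u,z)=\sum_{n\ge0}\sum_{j=0}^n\frac{1}{n+1}{2n-j\choose n}z^{j+1}u^{n+1}$ shows that the coefficient of $z^k u^N$ in $A$ equals $\frac{1}{N}{2N-k-1\choose N-1}$, again for $1\le k\le N$. The whole statement thus collapses to the elementary identity
\[
\frac 1 N {2N-k-1\choose N-1}=\frac{1}{2N-k}{2N-k\choose N-k},
\]
both sides being equal to $(2N-k-1)!/\big(N!\,(N-k)!\big)$ after expanding the binomials into factorials.

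The hard part will not be any analytic difficulty but the bookkeeping: there is exactly one nontrivial input, namely the power-of-Catalan formula from Lagrange inversion, and the rest is a routine factorial check. The step most prone to error is the reindexing — correctly identifying the power $k$ of $z$ arising from the $k$-th term of the logarithm with the shifted index $j+1$ in the definition of $A$, while tracking the total $u$-degree $N$. Once the two index conventions are aligned, the identity is immediate.
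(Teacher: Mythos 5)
Your proof is correct and follows essentially the same route as the paper's: both pass to logarithms, expand $-\log(1-zu\Cat(u))$, and reduce the claim to a coefficient identity for powers of the Catalan series obtained by Lagrange inversion (the paper applies it directly to $F(u)=u\Cat(u)=u/(1-F(u))$, while you parametrise via $V=\Cat(u)-1=u(1+V)^2$ and finish with a factorial check, which is only a cosmetic difference). All of your reindexings and the final binomial identity check out.
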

\begin{proof}
  Equivalently, what we want to prove reads
\[
A(u,z)= \log \frac1{1-zu \Cat(u)}=\sum_{j\ge 0} \frac {z^{j+1}}{j+1} (u\Cat(u))^{j+1}.
\]
Comparing with the expansion in $z$ of $A(u,z)$ (see~\eqref{A:def}),
what we want to show is
\[
[u^{n+1}] (u\Cat(u))^{j+1} = \frac{j+1}{n+1}{2n-j\choose n}.
\]
This follows from the Lagrange inversion formula~\cite[p.~732]{flajolet-sedgewick}, applied to
\[
F(u):=u\Cat(u)= \frac u{1-F (u)}.
\]
\end{proof}

\noindent
{\bf Remark.} The above lemma is {a special case of a
  general identity which} relates the enumeration of two classes of one-dimensional
  lattice paths, sharing the same step set, both constrained to end at a non-negative position. For the
  first class there is no other condition, while for the second class
  the path is not allowed to visit any negative point. The \gfs\ of
  these two classes, counted by the number of steps (variable $z$) and
  the final position (variable $u$) are respectively denoted by
  $W^+(z,u)$ and $F(z,u)$. Then, on p.~51
  of~\cite{banderier-flajolet}, {the following identity appears}
\[
F(z,u)=\exp\left( \int_0^z \left( W^+(t,u)-1\right)\frac{\text{d}t}t\right).
\]
When the only allowed steps are $+1$ and $-1$, this reads, using a
standard factori{z}ation on non-negative paths into \emm Dyck paths,
(counted by $\Cat(z^2)$):
\[
\frac{\Cat(z^2)}{1-zu\Cat(z^2)} =\exp\left(\sum_{n\ge 1}\sum_{j=0}^n
\frac1{2n-j} {2n-j \choose n} z^{2n-j} u^j\right).
\]
Upon dividing this identity by its specialization at $u=0$, we obtain
\[
\frac{1}{1-zu\Cat(z^2)} =\exp\left(\sum_{n\ge 1}\sum_{j=1}^n
\frac1{2n-j} {2n-j \choose n} z^{2n-j} u^j\right).
\]
Now some elementary transformations (involving replacing $u$ by $uz$,
then $z$ by $\sqrt z$, and finally swapping $u$ and $z$) shows that
this is equivalent to our lemma.

\section{A bijection}
\label{sec:bij}
In this short section, we first recall a bijection of Ambj\o rn and Budd~\cite{ambjorn-budd} that
sends labelled quadrangulations onto certain  maps carrying
integer labels on vertices (these maps are more general than the
labelled maps of Definition~\ref{def:labelled-map}). A specialization
of this bijection sends  \emm  certain, labelled
quadrangulations (those in which every face contains three labels)
onto labelled maps, which, as we have seen, are equinumerous with general
Eulerian orientations.
This is one of the key steps in the proof of Theorem~\ref{thm:gen}.
The  Ambj\o rn and Budd bijection, which generalizes the Cori-Vauquelin-Schaeffer
bijection between quadrangulations and certain labelled
trees~\cite{schaeffer-these,chassaing-schaeffer},  can also be seen to be equivalent to an earlier bijection
of Miermont~\cite{miermont-tesselations}. We refer to~\cite{bouttier-fusy-guitter} for a rich overview of
Schaeffer-like bijections.

The  Ambj\o rn and Budd bijection, which we denote by $\Phi$,  starts
from a labelled quadrangulation $Q$. The edges of $Q$ are  dashed in
our figures. The construction, illustrated on the left of 
Figure~\ref{fig:bijection}, takes place independently
in every face of $Q$, and in each face,  coincides with
Schaeffer's construction of labelled
trees~\cite{chassaing-schaeffer}: a new (solid) edge is created in
every face of $Q$, and its position depends on whether the face contains three of two distinct labels\footnote{If the outer face has three disctinct labels and  is drawn as the infinite face, the solid edge that we add still has an edge  from $\ell+1$ to $\ell+2$ on its right, now in clockwise order around the outer face.}. 
  A complete example  is shown on the right of   Figure~\ref{fig:bijection}.

\begin{figure}[htb]
  \centering
   \scalebox{0.9}{\input{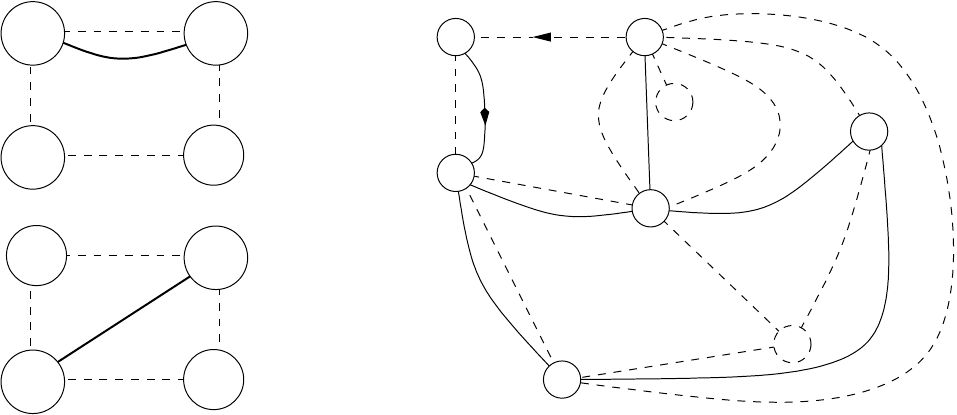_t}}
  \caption{Left: Construction of $\Phi$ in a face of $Q$. Right: A
    labelled quadrangulation $Q$ (dashed
    edges) and the
    associated map $M=\Phi(Q)$ (solid edges)
    superimposed.  Two white vertices of $Q$, namely its
    local minima, shown in dashed disks, 
    disappear when constructing $M$.}
  \label{fig:bijection}
\end{figure}

Observe that each vertex of $Q$ that is not a local minimum is
joined to at least one other vertex.
Since the root edge of $Q$ is oriented from $0$ to $1$,  the co-root vertex
must be  joined to another vertex $v$ by an edge 
 located in the co-root face of $Q$. We orient
this edge towards~$v$: this will be 
the root edge of the new object. Finally, we delete all edges of $Q$,
and also all vertices of $Q$ 
that have become isolated: they are those whose label is a local
minimum. We denote by $\Phi(Q)$ the resulting object, which is a planar
graph embedded in the plane, with a root edge that starts from a   vertex
labelled 1.

\begin{Proposition}[Thm.~1 in~\cite{ambjorn-budd}] \label{prop:bij}
  The transformation $\Phi$ {bijectively sends} labelled quadrangulations to
  planar maps carrying
  integer labels on vertices, differing by $0, \pm1$ along edges,
  having root vertex labelled $1$. Moreover, if $\Phi(Q)=M$, then the
  number of edges and faces in $M$ are given by
\[
\ee(M)=\ff(Q), \qquad \ff(M)=\vv_{\min}(Q),
\]
where $\vv_{\min}(Q)$ denotes the number of local minima in $Q$. The first identity can be refined as follows: a face
  of $Q$ in which only two different labels occur gives rise to an edge of $M$
  with increment $0$, while a face where three different labels occur gives
  rise to an edge with increment~$\pm1$. 
\end{Proposition}

\begin{figure}[b]
  \centering
   \scalebox{0.7}{\input{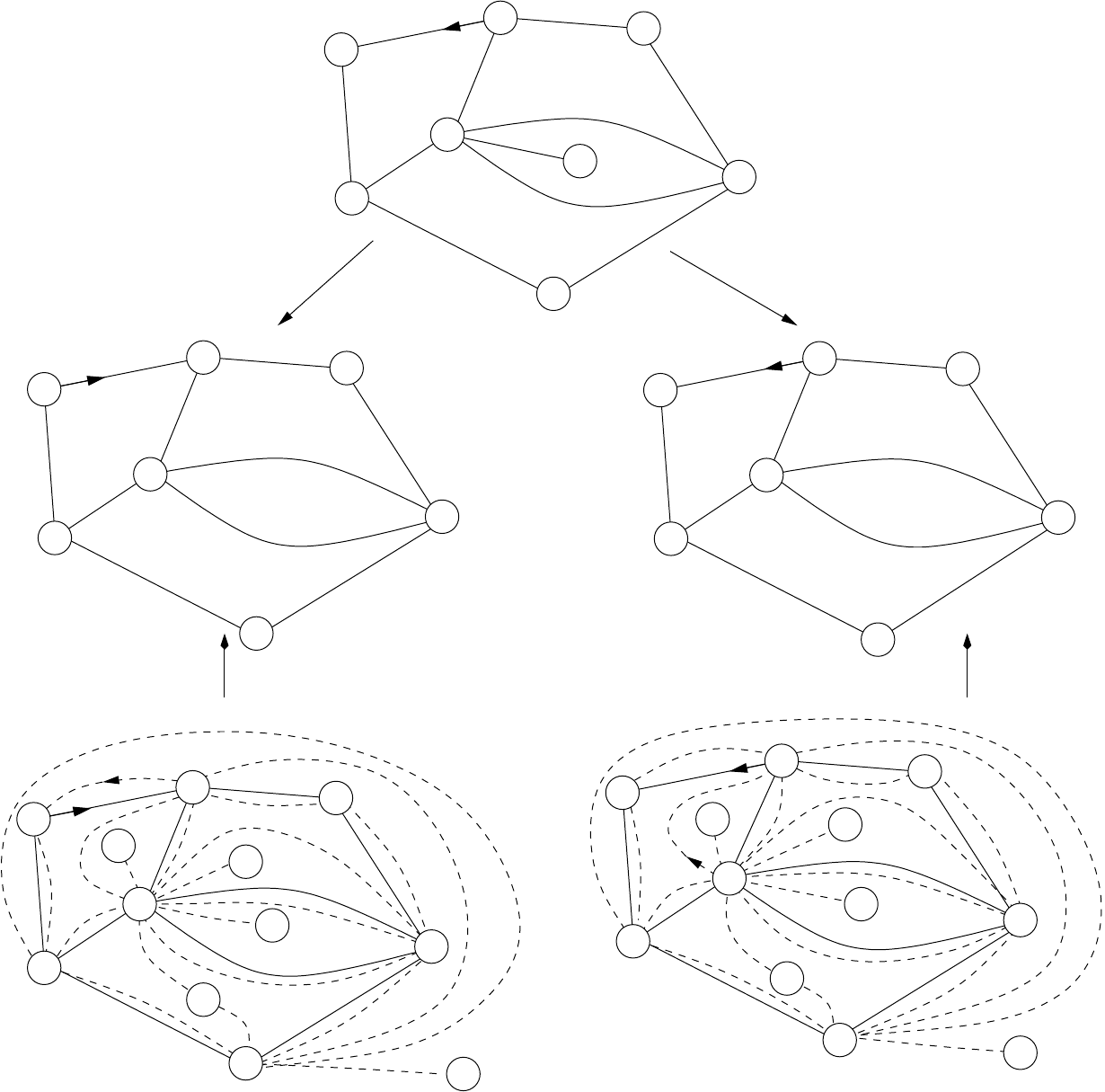_t}}
  \caption{One labelled map gives two maps with root vertex 1, which
    are the images by $\Phi$ of two colourful labelled
    quadrangulations $Q$ and $Q'$ (in dashed edges). They only differ by a shift of labels and a
    change in the root edge.}
  \label{fig:1to2}
\end{figure}

Of particular importance will be labelled quadrangulations in which every face
  (including the outer one) contains
  three distinct labels: we call them \emm colourful,.  Take  a colourful
labelled quadrangulation $Q$. By the above proposition, the map
$M:=\Phi(Q)$ has all increments equal to~$\pm1$. 
Its root vertex is labelled $1$, hence the root edge is labelled either from $1$ to
$0$, or from~$1$ to~$2$. In the former case, reversing the direction
of the root edge gives a labelled map (in the sense of
Definition~\ref{def:labelled-map}). In the latter case, subtracting
$1$ from every label gives a labelled map. 
Conversely,  take a labelled map $L$, and reverse the orientation of its root edge: this gives
 a map of the form $\Phi(Q)$, in which the root edge has
labels~1 and 0 (Figure~\ref{fig:1to2}, left). Alternatively, one can
add $1$ to every label of $L$: the
resulting map  is of the form $\Phi(Q')$, and its root edge {has} labels 1 and 2
(Figure~\ref{fig:1to2}, right). 
 This
gives a 2-to-1 correspondence between colourful labelled
quadrangulations and labelled maps. This  will
be the key in our enumeration of general Eulerian orientations.

\begin{Corollary}\label{cor:colourful}
  The number of  colourful labelled quadrangulations with $n$ faces and
  $k$ local minima equals twice the number of labelled maps with $n$
  edges and $k$ faces, or equivalently, twice the number of Eulerian
  orientations with $n$ edges and $k$ vertices.
\end{Corollary}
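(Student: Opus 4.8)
The plan is to treat the corollary as a bookkeeping consequence of the $2$-to-$1$ correspondence between colourful labelled quadrangulations and labelled maps that has just been described, tracking the relevant parameters through $\Phi_4$ and then invoking Lemma~\ref{lem:duality} for the reformulation in terms of Eulerian orientations. No new combinatorial construction is needed: the entire content lies in checking that the correspondence is genuinely $2$-to-$1$ and that it respects sizes as claimed.

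First I would make precise the two maps underlying the correspondence. Starting from a labelled map $L$, I produce two maps with all increments $\pm 1$ and root vertex labelled $1$: the map $M_A$ obtained by reversing the root edge (its root edge then runs from $1$ to $0$), and the map $M_B$ obtained by adding $1$ to every label (its root edge then runs from $1$ to $2$). Since the far endpoint of the root edge carries label $0$ in the first case and $2$ in the second, we have $M_A\neq M_B$. By the refinement in Proposition~\ref{prop:Phi4}, a map with all increments $\pm 1$ and root vertex labelled $1$ is exactly $\Phi_4(Q)$ for a unique \emph{colourful} labelled quadrangulation $Q$; hence $M_A$ and $M_B$ determine two distinct colourful quadrangulations. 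Conversely, given a colourful $Q$, the map $M=\Phi_4(Q)$ has all increments $\pm 1$ and root vertex labelled $1$, so its root edge runs either from $1$ to $0$ or from $1$ to $2$; reversing the root edge in the first case, or subtracting $1$ from every label in the second, recovers a labelled map $L$, and the two preimages of this $L$ are precisely $M_A$ and $M_B$. This confirms that $Q\mapsto L$ is exactly $2$-to-$1$.

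Next I would verify the statistics. Both operations $L\mapsto M_A$ and $L\mapsto M_B$ leave the underlying rooted map unchanged (only the root orientation or the label values are altered), so $\ee(M)=\ee(L)$ and $\ff(M)=\ff(L)$ for the associated $M$. Combining this with the identities $\ee(M)=\ff(Q)$ and $\ff(M)=\vv_{\min}(Q)$ of Proposition~\ref{prop:Phi4}, a labelled map $L$ with $n$ edges and $k$ faces corresponds, under the $2$-to-$1$ map, to two colourful labelled quadrangulations each having $\ff(Q)=n$ faces and $\vv_{\min}(Q)=k$ local minima. This yields the first assertion: the number of colourful labelled quadrangulations with $n$ faces and $k$ local minima equals twice the number of labelled maps with $n$ edges and $k$ faces.

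Finally, for the ``equivalently'' I would apply Lemma~\ref{lem:duality}: its bijection between labelled maps and Eulerian orientations preserves the number of edges and exchanges vertex and face degrees, so a labelled map with $n$ edges and $k$ faces corresponds to an Eulerian orientation with $n$ edges and $k$ vertices. Substituting gives the stated reformulation. I expect the only genuinely delicate point to be the root-edge bookkeeping in the $2$-to-$1$ step: one must confirm that the two operations always land in the two distinct root-label cases ($0$ versus $2$) and never coincide, so that no colourful quadrangulation is assigned the wrong multiplicity. Everything else is an immediate consequence of Proposition~\ref{prop:Phi4} and Lemma~\ref{lem:duality}.
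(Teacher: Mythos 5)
Your proposal is correct and follows essentially the same route as the paper: the paper establishes this corollary via exactly the $2$-to-$1$ correspondence you describe (reversing the root edge versus shifting all labels by $1$, combined with the refinement of Proposition~\ref{prop:Phi4} identifying colourful quadrangulations with label increments $\pm1$, and Lemma~\ref{lem:duality} for the Eulerian reformulation). Your writeup merely makes the root-edge bookkeeping and the parameter tracking more explicit than the paper's prose, with no substantive difference.
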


If we
 start from a general labelled quadrangulation $Q$, possibly
   containing  faces with  only two labels, then we can still
 apply the duality rule of Figure~\ref{fig:duality} to the map $\Phi(Q)$ (carrying labels), with
 the additional rule that we do not orient an edge that lies between
 two faces with the same label. In this way we obtain an Eulerian \emm partial,
 orientation, that is, a map in which \emm some, edges are
 oriented, in such a way that there are as many incoming as outgoing edges
 at any vertex (Figure~\ref{fig:4-V-partial}).

\begin{Corollary}
  There is a bijection between quartic Eulerian orientations  with
  $n$ vertices ({in which the root edge is}  oriented canonically) and Eulerian  partial orientations  with $n$ edges
  (with no orientation requirement on the root edge).
\end{Corollary}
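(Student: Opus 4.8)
The plan is to compose the two bijections already established with one further, straightforward variant of the duality of Lemma~\ref{lem:duality}. First I would use Lemma~\ref{lem:duality} to identify quartic Eulerian orientations having $n$ vertices with labelled quadrangulations $Q$ having $n$ faces: duality exchanges vertex and face degrees and preserves the number of edges, so a quartic orientation with $n$ vertices (and hence $2n$ edges) becomes a labelled map all of whose faces have degree $4$, that is, a quadrangulation with $n$ faces. Next I would apply Proposition~\ref{prop:Phi4} to pass from $Q$ to the labelled map $M:=\Phi_4(Q)$, whose labels differ by $0,\pm1$ along edges, whose root vertex is labelled $1$, and which satisfies $\ee(M)=\ff(Q)=n$. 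Thus quartic Eulerian orientations with $n$ vertices are already in bijection with such labelled maps $M$ having $n$ edges.

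The heart of the argument is then a \emph{partial} analogue of Lemma~\ref{lem:duality}: I claim that labelled maps $M$ with increments $0,\pm1$ and root vertex labelled $1$ are in bijection with partial Eulerian orientations having the same number of edges. In the forward direction I would apply the duality rule of Figure~\ref{fig:duality} to $M$: the partial orientation lives on the dual $M^\ast$, so it has $\ee(M^\ast)=\ee(M)=n$ edges, and each dual edge is oriented according to the two labels it separates whenever these differ by $\pm1$, and is left unoriented whenever the two labels coincide. The key point to verify is that this really is a partial Eulerian orientation: a vertex of $M^\ast$ corresponds to a face $F$ of $M$, and reading the labels around $\partial F$ one returns to the starting label, so the increments $+1$ and $-1$ occur equally often; under the duality rule the $+1$-steps produce the edges oriented one way at that vertex and the $-1$-steps the edges oriented the other way, while the $0$-steps produce unoriented edges. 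Hence the in-degree and out-degree, counting only oriented edges, agree at every vertex of $M^\ast$.

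For the inverse I would start from a partial Eulerian orientation on a map $G$ with $n$ edges and integrate it into a height function on the faces of $G$: crossing an oriented edge changes the height by $\pm1$ according to the orientation, while crossing an unoriented edge leaves it unchanged. This height function is well defined because, going once around any vertex $v$ of $G$, the total height change equals the out-degree of $v$ minus its in-degree, which vanishes by the partial Eulerian condition; the global additive ambiguity is removed by declaring the root vertex of the resulting labelled map $G^\ast$ to carry label $1$. Recording these labels on the vertices of $G^\ast=M$ recovers a labelled map with increments $0,\pm1$, and one checks directly that this inverts the forward construction, with unoriented edges of $G$ corresponding exactly to edges of $M$ across which the label is constant. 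The root conventions match cleanly: the root edge of the partial orientation is the dual of the root edge of $M$, and since the root vertex of $M$ is labelled $1$, the root edge of $M$ has increment $0$, $+1$ or $-1$, corresponding respectively to the root edge of $G$ being unoriented or oriented in either direction --- which is precisely the freedom allowed by ``no orientation requirement on the root edge'' (using the rooting of dual maps fixed in Figure~\ref{fig:example-map}).

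The main obstacle is the well-definedness of the inverse height function: one must confirm that the balance condition at every vertex is exactly what makes the integration around vertices consistent. This is the same mechanism as in Lemma~\ref{lem:duality}, now with unoriented edges contributing increment $0$, so the only genuinely new bookkeeping is to track that unoriented edges correspond precisely to the constant-label edges of $M$ and that the correspondence of roots respects the relaxed root clause. Composing the three bijections then sends quartic Eulerian orientations with $n$ vertices to partial Eulerian orientations with $n$ edges, as required.
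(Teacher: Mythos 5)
Your proposal is correct and follows essentially the same route as the paper: quartic Eulerian orientation $\to$ dual labelled quadrangulation (Lemma~\ref{lem:duality}) $\to$ labelled map with increments $0,\pm1$ via $\Phi_4$ (Proposition~\ref{prop:Phi4}) $\to$ partial Eulerian orientation by the duality rule of Figure~\ref{fig:duality}, with edges between equal labels left unoriented. The paper only sketches this composition in the paragraph preceding the corollary; your verification that the inverse height-function integration is well defined (and that the root conventions match) supplies details the paper leaves implicit, but introduces no new idea.
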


\begin{figure}[ht]
  \centering
     \scalebox{0.7}{\input{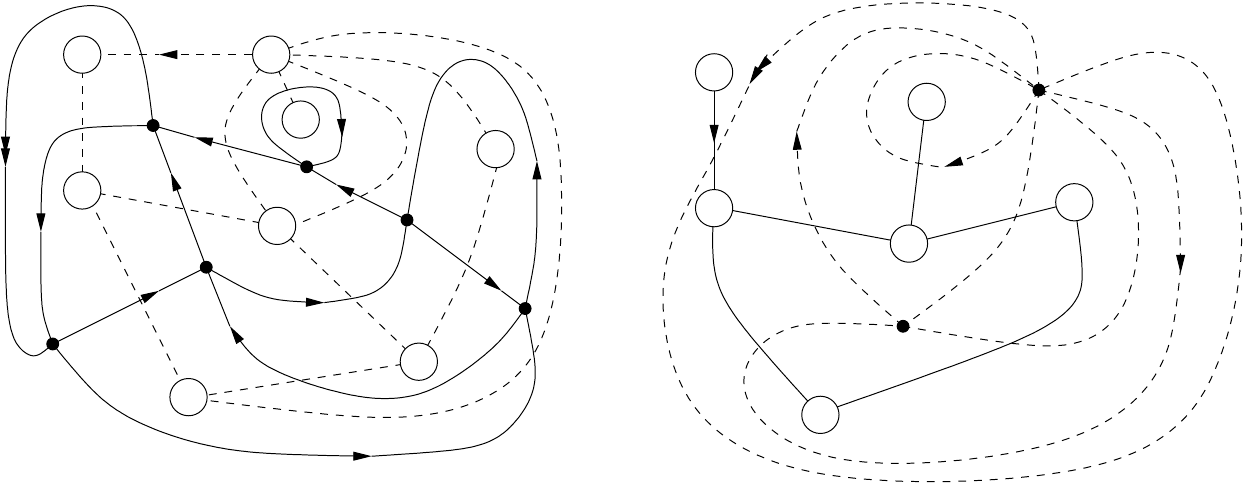_t}}
  \caption{From quartic Eulerian orientations to Eulerian \emm
      partial, orientations. Left: a quartic Eulerian orientation, shown by solid edges,
    and the dual labelled quadrangulation $Q$ (dashed edges). This is
    the quadrangulation of Figure~\ref{fig:bijection}, which also shows the map
    $M=\Phi(Q)$.  Right:
    upon  re-applying duality to $M$ (shown in solid lines), one obtains an
    Eulerian partial orientation of its dual    (dashed edges).}
  \label{fig:4-V-partial}
\end{figure}

\section{Functional equations for general Eulerian  orientations}
\label{sec:func-gen}

In this section we will characterise the generating function $\Qc(t)$
of colourful labelled quadrangulations (which, by
  Corollary~\ref{cor:colourful}, is twice the \gf\ of Eulerian
orientations) by a system of functional equations.  As one might expect,
we adapt  the system of
Theorem~\ref{thm:systemQ} to the colourful setting. However,  the third equation and the initial conditions are simpler in the colourful case.

\begin{Theorem}\label{thm:systemG}
  There exists a unique $3$-tuple of series, denoted   $\Pgf(t,y)$, $\Cgf(t,x,y)$
and $\Dgf(t,x,y)$, belonging respectively to $\qs[[y,t]]$,
  $\qs[x][[y,t]]$ and $\qs[[x,y,t]]$, and satisfying the
  following equations:
\begin{align*}
\Pgf(t,y)&=\frac{1}{y}[x^1]\Cgf(t,x,y),\\
\Dgf(t,x,y)&=\frac{1}{1-\Cgf\left(t,\frac{1}{1-x},y\right)},\\
\Cgf(t,x,y)&=xy[x^{\geq0}]\left(\Pgf(t,tx)\Dgf\left(t,\frac{1}{x},y\right)\right),
\end{align*}
together with the initial condition $\Pgf(t,0)=1$.

The \gf\ that counts colourful labelled quadrangulations by faces
is
\[
\Qc(t)=[y^1]\Pgf(t,y)-1.
\]
By  Corollary~\ref{cor:colourful}, $\Qc(t) =2 \Ggf(t)$, where $\Ggf(t)$   counts Eulerian
orientations  by edges.
\end{Theorem}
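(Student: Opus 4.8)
The plan is to follow the architecture of the proof of Theorem~\ref{thm:systemQ}, replacing our three combinatorial classes by their \emph{colourful} analogues. Define a colourful patch, a colourful C-patch and a colourful D-patch exactly as in Definition~\ref{def:patches}, but with the extra requirement that every inner quadrangle carry three distinct labels (digons, being incident to the root and of degree~$2$, are not concerned). Let $\Pgf$, $\Cgf$ and $\Dgf$ denote the corresponding \gfs, with $t$ counting inner quadrangles, $y$ half the outer degree, and $x$ the root degree (for C-patches) or the number of digons (for D-patches). The finiteness arguments given below Definition~\ref{def:patches} carry over unchanged, so these series belong to $\qs[[y,t]]$, $\qs[x][[y,t]]$ and $\qs[[x,y,t]]$ respectively. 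It then remains to verify the three functional equations and the initial condition, to establish the formula for $\Qc(t)$, and to prove uniqueness.

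The first two equations, the initial condition, and the expression of $\Qc(t)$ require only light adaptations of the arguments already given. The first equation, $\Pgf=\frac1y[x^1]\Cgf$, follows from the bijection of Lemma~\ref{PfromC} (delete the degree-one root vertex of a C-patch and apply $\ell\mapsto 1-\ell$), which preserves colourfulness. The second equation, $\Dgf=1/(1-\Cgf(t,1/(1-x),y))$, follows from the B-patch construction of Lemma~\ref{DfromC}: concatenating colourful C-patches at their root and inflating each root-incident edge into digons produces exactly the colourful D-patches, and creates no new quadrangle. The initial condition $\Pgf(t,0)=1$ records that the unique colourful patch of outer degree $0$ is the atomic one. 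Finally, $\Qc(t)=[y^1]\Pgf(t,y)-1$ is obtained as in Lemma~\ref{lem:Qgf}: adding an edge between the root and co-root vertices of a colourful labelled quadrangulation turns it into a colourful patch of outer degree~$2$ (the former outer quadrangle, being colourful, becomes a colourful inner quadrangle), bijectively and omitting only the single-edge patch.

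The crux is the third equation, $\Cgf(t,x,y)=xy\,[x^{\ge0}]\bigl(\Pgf(t,tx)\,\Dgf(t,1/x,y)\bigr)$, which is considerably simpler than its quartic counterpart (Lemma~\ref{Dequation}) precisely because of colourfulness: if $v_0$ is the root of a colourful C-patch then, its neighbours being labelled $1$, each incident inner quadrangle has labels $(0,1,2,1)$ in cyclic order, since a fourth label $0$ would make the face bicoloured (compare Figure~\ref{fig:two_vertex_types}). Hence a root of degree~$D$ is the apex of a \emph{fan} of $D-1$ such quadrangles. I would prove the equation by the following bijection. In one direction, cut the C-patch open at $v_0$: the fan, with its labels decreased by $1$ and its apex removed, is a patch $P$ (with, say, $n$ inner quadrangles and outer half-degree $j$), each of whose $j$ boundary segments becomes an inner quadrangle upon reconstruction --- this explains the substitution $y\mapsto tx$, the factor $t$ recording the created quadrangle and $x$ recording its contribution to the degree of $v_0$. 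The remaining structure, hanging from the level-$1$ vertices of the fan, is gathered into a D-patch (with $n'$ inner quadrangles and $i$ digons) whose digons are matched against the boundary of $P$; the substitution $x\mapsto 1/x$ encodes this matching, and the operator $[x^{\ge0}]$ enforces $i\le j$. The prefactor $xy$ restores the root corner and the two outer edges at $v_0$. The main obstacle is to check that this correspondence is indeed a bijection and that the parameters transform as the coefficient extraction requires, namely $D=j-i+1$ and a total of $N=n+j+n'$ inner quadrangles; this is the colourful analogue of the fan/contraction analysis preceding Lemma~\ref{Dequation}.

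It remains to prove uniqueness, which proceeds by induction on the total degree $N$ in $(y,t)$, as at the end of the proof of Theorem~\ref{thm:systemQ}, but with one extra subtlety. The second equation always expresses a coefficient of $\Dgf$ in terms of coefficients of $\Cgf$ of strictly smaller total degree, so $\Dgf$ causes no trouble; likewise the third equation recovers $\Cgf$ at degree $N$ from $\Pgf$ and $\Dgf$ of smaller degree, \emph{except} that its $y^1$-part calls on coefficients of $\Pgf$ of degree $N-1$. These are in turn produced, via the first equation, from coefficients of $\Cgf$ of degree $N$ but of strictly larger outer degree --- save for the coefficients $p_{0,n}$, which are fixed once and for all by the initial condition $\Pgf(t,0)=1$. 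Ordering the computation within each total degree by decreasing outer degree thus closes the induction. Finally, the identity $\Qc(t)=2\,\Ggf(t)$ is quoted directly from Corollary~\ref{cor:colourful}.
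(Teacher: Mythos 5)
Your proposal follows the paper's proof almost exactly: the same colourful patch classes, the same reuse of Lemmas~\ref{PfromC}, \ref{DfromC} and~\ref{lem:Qgf}, a subpatch-extraction/contraction argument for the third equation, and an induction on total degree for uniqueness. Two points need repair. First, your object $P$ is misdescribed: ``the fan, with its labels decreased by $1$ and its apex removed'' is not a patch (removing $v_0$ from the fan leaves only a zigzag path of labels $1,2,1,2,\dots$), and the fan itself accounts for only $j-i$ of the $j$ quadrangles that must be created upon reconstruction. The correct object --- the paper's \emph{shifted subpatch} --- is the maximal connected submap of labels $\ge 1$ containing the co-root vertex $v_1$, together with everything inside its boundary; its outer boundary strictly contains the far side of the fan whenever $i>0$, the extra boundary corners being precisely those that produce the $i$ digons of the D-patch after contraction and deletion of $v_0$. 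Your parameter accounting ($\deg v_0=j-i+1$, total $n+j+n'$ quadrangles) is exactly the paper's, so this is a matter of defining $P$ correctly and proving reversibility of the contraction (which the paper does by adapting the minus-subpatch machinery), rather than a wrong idea; but as literally written the decomposition does not parse.

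Second, in the uniqueness argument, the second equation does \emph{not} express $d_{j,n}$ in terms of coefficients of $\Cgf$ of strictly smaller total degree: the linear term of $1/(1-\Cgf)$ contributes $c_{j,n}$ itself. This is harmless provided that, within each total degree, you compute first $\Cgf$ (which needs $\Dgf$ only at strictly smaller degree, thanks to the factor $y$ in the third equation), then $\Dgf$, then $\Pgf$. Your decreasing-outer-degree ordering does resolve the $\Cgf$/$\Pgf$ circularity correctly, with the initial condition supplying $p_{0,n}$; the paper instead sidesteps the issue by substituting the third equation into the first to obtain $\Pgf(t,y)=\Pgf(t,0)+[x^0]\Pgf(t,tx)\bigl(\Dgf(t,1/x,y)-1\bigr)$, which avoids the division by $y$ altogether. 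Both organizations of the induction are valid.
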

 
\medskip
\noindent{\bf Remark.} As with the system of
Theorem~\ref{thm:systemQ},  the conditions  on the series
$\Pgf$, $\Cgf$ and $\Dgf$ make the operations that occur in the above
equations  well defined. The extraction of the coefficient of
  $x^1$, and the replacement of $x$ by $1/(1-x)$, are justified as for
   the previous system. In  the third equation, the term $\Pgf(t,tx)\Dgf\left(t,\frac{1}{x},y\right)$
must be seen as a power series in $t$ and $y$ whose coefficients are
Laurent series in $1/x$. The extraction of the non-negative part in
$x$ then yields an element of $\qs[x][[y,t]]$.

\medskip
As before, the series  $\Pgf$, $\Cgf$ and $\Dgf$ of Theorem~\ref{thm:systemG}
count certain labelled maps. Recall the definition of patches,
C-patches and D-patches (Definition~\ref{def:patches}). Generalizing
the definition of colourful quadrangulations introduced in Section~\ref{sec:bij}, we say that 
a patch  (or a D-patch) is  {\em colourful} if each inner
quadrangle contains $3$ distinct labels.
We define $\Pgf(t,y)$, $\Cgf(t,x,y)$ and $\Dgf(t,x,y)$ to be
respectively the \gfs\ of colourful patches, colourful C-patches and colourful D-patches, where $t$
counts inner quadrangles, $y$ the outer degree (halved), and $x$
either the degree of the root vertex (for  C-patches)  or the number of
inner digons (for  D-patches). The equations
\begin{align*}
\Pgf(t,y)&=\frac{1}{y}[x^1]\Cgf(t,x,y)\\
\Dgf(t,x,y)&=\frac{1}{1-\Cgf\left(t,\frac{1}{1-x},y\right)},\\
\Qc(t)&=[y^1]\Pgf(t,y)-1,
\end{align*}
have identical proofs to those in Section~\ref{sec:func4}, except that
the patches, D-patches and quadrangulations in the proofs are
restricted to being colourful (see Lemmas~\ref{PfromC},
  \ref{DfromC} and~\ref{lem:Qgf}). 

\medskip
\noindent{\bf Remark.} The third equation of Theorem~\ref{thm:systemQ},
\beq\label{eq:D-heavy}
\Dgf(t,x,y)=1+y \, [x^{\geq0}]\left(\Dgf(t,x,y)\left(\frac{1}{x}\Pgf\left(t,\frac{t}{x}\right)+[y^{1}]\Dgf(t,x,y)\right)\right)
\eeq
also holds in the colourful setting, with the same proof as
before (because in the proof of  Lemma~\ref{Dequation}, all
  quadrangles that come from digons are automatically colourful).   Its
natural complement, which is the fourth  equation {of} Theorem \ref{thm:systemQ} (the
initial condition) has no clear colourful counterpart: indeed, the
relabelling of vertices  that we use in Lemma~\ref{Dout2}, and more
precisely in the fourth case
of Figure~\ref{fig:Dout2_cases}, transforms the colourful quadrangles
incident to the root vertex into bicoloured quadrangles (and vice
versa).  We could  instead use the initial condition
$[y^1]\Cgf(t,x,y)=x \Pgf(t,tx)$, which can be proved by taking
  a colourful C-patch
of outer degree~2 and deleting the
root vertex and all  incident edges, {then decreasing each label by $1$ (Figure~\ref{fig:Cresinitial})}. However, the third equation of Theorem~\ref{thm:systemG} is simpler
than~\eqref{eq:D-heavy}, and  also relies on a simpler
construction. 
\begin{figure}[ht]
\setlength{\captionindent}{0pt}
 \includegraphics[scale=0.7]{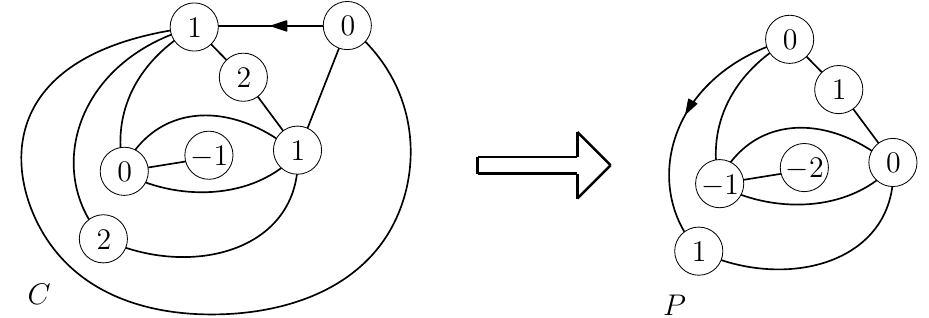} 
   \caption{A colourful C-patch $C$ with outer degree $2$ and the corresponding patch $P$.}
   \label{fig:Cresinitial}
\end{figure}

\medskip

In order to prove the third equation of Theorem~\ref{thm:systemG}, we need some
analogues of minus-patches from Section \ref{sec:func4}, which we call \emm
shifted patches,.

\begin{Definition}
  A {\em shifted patch} is a
  map obtained from  a patch by replacing each label $\ell$ with $\ell+1$.
\end{Definition}
We now describe a way to extract a shifted subpatch from a patch,
which  parallels the extraction of a minus-patch of
Definition~\ref{def:minus-sub}. One minor difference is that we do not
need conditions on the neighbours of the root vertex, so that we
define shifted subpatches for any patch (although we will only 
extract them from C-patches later).

\begin{Definition}
 Let $P$ be a patch and let $c$ be an outer corner of $P$ at a vertex
 $v$ labelled $1$. We define the {\em shifted subpatch of $P$ rooted
 at $c$} as follows. First, let $S'$ be the maximal connected submap
 of $P$ that contains $v$ and consists of vertices   labelled
   $1$ or more. Let $S$ be the submap of $P$ that contains $S'$ and all
 edges and vertices within its boundary (assuming the root face is
 drawn as the infinite face). The map $S$, which we root at the corner
 inherited from $c$, is the {\em shifted subpatch of $P$ rooted at
 $c$}. 
\end{Definition}
An example is shown on the left of
  Figure~\ref{fig:CinPD_proof}, where the shifted subpatch $S$ is drawn with
  thick lines. It is easily shown  that  $S$ is, as it should {be}, a shifted patch. The argument is the
same as for minus-subpatches (in that case, the condition on the
neighbours of the root having labels $1$ was there to prevent the
minus-subpatch to absorb the root vertex; this cannot happen with the
shifted subpatch, whose boundary  only contains positive labels). Every edge in $P$ {that}
connects a vertex in  $S$ to a vertex not in $S$ must have endpoints
labelled $1$ (in $S$) and $0$ (not in~$S$), and conversely every
vertex labelled 1 on the boundary of $S$ is joined to a vertex
labelled $0$ out of $S$. We can  contract $S$ it into a
single vertex $v_{1}$  labelled $1$. This vertex is only adjacent to
vertices labelled $0$ in the resulting map $L$, and the number of
digons incident to $v_{1}$ is half the outer degree of $S$. The
  outer degrees of $L$ and $P$ coincide, because no edge of the
  boundary of~$P$ has been contracted.

As in the case of minus-subpatches, we can uniquely reconstruct the patch $P$
and its marked corner $c$ if we are given the shifted patch $S$ and
the contracted map $L$, together with its outer corner inherited from
$c$. The idea is again to attach the edges incident to $v_1$ in $L$
around the shifted patch $S$, as illustrated (in the case of minus-patches) in Figure~\ref{fig:decontract}.

We are now ready to prove the third equation of Theorem~\ref{thm:systemG}.

\begin{figure}[ht]
\setlength{\captionindent}{0pt}
   \includegraphics[scale=0.7]{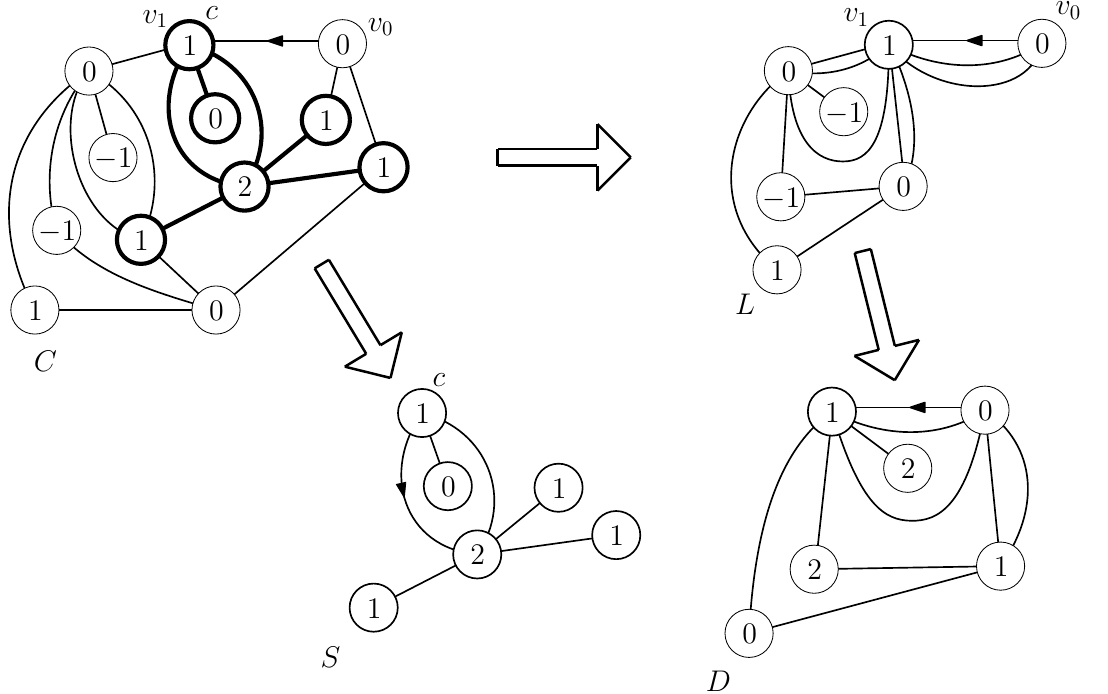} 
   \caption{On the left is a C-patch ${C}$. The
     other  maps shown are the shifted subpatch $S$, the contracted
     map $L$ and the D-patch $D$ obtained by deleting $v_0$, as described in the proof
     of Lemma~\ref{lem:CG}.}
   \label{fig:CinPD_proof}
\end{figure}
\begin{Lemma} \label{lem:CG}
The generating functions $\Pgf$, $\Cgf$ and $\Dgf$ satisfy the equation
\[
\Cgf(t,x,y)=xy[x^{\geq0}]\left(\Pgf(t,tx)\Dgf\left(t,\frac{1}{x},y\right)\right).\]
\end{Lemma}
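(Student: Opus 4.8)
The plan is to prove this equation bijectively, by decomposing a colourful C-patch $C$ into a pair $(S,D)$ consisting of a colourful shifted patch $S$ and a colourful D-patch $D$, in close analogy with the proof of Lemma~\ref{Dequation}. The generating-function identity will then record how the three statistics---inner quadrangles (variable $t$), half the outer degree (variable $y$), and root-vertex degree (variable $x$)---split between $S$ and $D$; the prefactor $xy$ and the substitutions $y\mapsto tx$ and $x\mapsto 1/x$ encode the bookkeeping, while the operator $[x^{\ge 0}]$ discards the combinations that do not come from an actual C-patch.

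First I would analyse the local structure at the root vertex $v_0$. Since $C$ is a C-patch, $v_0$ is labelled $0$, all its neighbours are labelled $1$, and the root corner is its only outer corner. Colourfulness then forces every inner quadrangle incident to $v_0$ to carry the labels $0,1,2,1$, so the vertex opposite $v_0$ in each such quadrangle is labelled $2$; in particular all neighbours of $v_0$, together with these opposite vertices, lie in the shifted subpatch $S$ rooted at the corner of the co-root vertex (the corner following the root edge). Contracting $S$ to a single vertex $v_1$ labelled $1$ produces the map $L$ of Figure~\ref{fig:CinPD_proof}, in which $v_1$ is incident to $j_S$ digons, where $2j_S$ is the outer degree of $S$. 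Exactly $d-1$ of these digons separate $v_0$ from $v_1$, where $d$ is the degree of $v_0$, while the remaining $i_D:=j_S-(d-1)$ digons lie between $v_1$ and the other label-$0$ vertices. Deleting $v_0$ (and relabelling by $\ell\mapsto 1-\ell$ to respect the root-edge convention) yields a genuine colourful D-patch $D$, rooted at $v_1$, whose inner digons are exactly these $i_D$ digons. A direct count then gives $d=1+j_S-i_D$, outer degree of $C$ equal to the outer degree of $D$ plus $2$, and inner quadrangles of $C$ equal to $n_S+n_D+j_S$ (the extra $j_S$ being the quadrangles of $C$ straddling the boundary of $S$, which collapse to the $j_S$ digons of $L$). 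This is precisely the parameter correspondence encoded by the right-hand side.

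For the reverse map I would start from a shifted patch $S$ and a D-patch $D$, expand the root vertex $v_1$ of $D$ back into $S$---undoing the contraction exactly as in Figure~\ref{fig:decontract}, which turns each of the $i_D$ inner digons of $D$ into a colourful quadrangle---and then re-create $v_0$ in the outer face, joined to the $d$ consecutive label-$1$ corners of $S$ that remain free, thereby inserting $d-1$ further quadrangles with labels $0,1,2$. This is possible precisely when $d-1=j_S-i_D\ge 0$, i.e. when the corresponding power of $x$ is non-negative; this is why the identity carries $[x^{\ge 0}]$, matching the fact that the root degree $d\ge 1$ of a C-patch is automatic. Canonicity of the root corner of $D$ (inherited from that of $S$) pins down the free arc uniquely, so the forward and reverse constructions are mutually inverse.

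I expect the main obstacle to be twofold. First, one must check carefully that the construction stays within the colourful world: this is the whole reason for preferring this decomposition to the quartic one of Lemma~\ref{Dout2}, and it rests on verifying that every quadrangle created---both the $0,1,2,1$ quadrangles around $v_0$ and those formed when attaching the boundary of $S$ (labels $1,2$) to the external label-$0$ vertices---carries three distinct labels. Second, one must verify that the decontraction is well defined and unique, namely that the edges and digons at $v_1$ can be reattached around the boundary of $S$ in exactly one planar way; this is the same delicate point that arises in the reconstruction described before Lemma~\ref{Dequation}, and it should follow from the same argument.
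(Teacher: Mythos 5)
Your proposal is correct and follows essentially the same route as the paper: extract the shifted subpatch $S$ rooted at the corner of the co-root vertex, contract it, use colourfulness to show that $v_0$ is then joined only to $v_1$ through digons, delete $v_0$ and relabel $\ell\mapsto 1-\ell$ to obtain a colourful D-patch, and track the parameters exactly as you do (root degree $j_S-i_D+1$, outer degree shifted by $2$, quadrangle count $n_S+n_D+j_S$), with the reverse construction via decontraction as in Figure~\ref{fig:decontract}. The points you flag as delicate (colourfulness of the created quadrangles, uniqueness of the reattachment) are indeed the ones the paper addresses, and in the same way.
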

\begin{proof}
Let $C$ be any colourful C-patch. Let $v_{0}$ and $v_{1}$ be the root
vertex and co-root vertex of~$C$, and let $c$ be the outer corner of
$v_{1}$ {that} is immediately anticlockwise of the root edge (we refer
to Figure~\ref{fig:CinPD_proof} for an illustration). Let $S$
be the shifted subpatch of ${C}$ rooted at $c$ and let~$L$ be the
labelled map obtained from $C$ by contracting the subpatch $S$ to a single vertex, still
denoted $v_{1}$. Then in $L$, the root vertex $v_{0}$ is only adjacent
to vertices labelled 1 (because this was already true in $C$),
and the co-root vertex {$v_1$} is only adjacent to vertices labelled 0
(because of the contraction). 

Recall that all inner faces of $L$ are either digons or
  quadrangles. We want to prove that in~$L$, {the root vertex} $v_0$ is not incident to any inner
  quadrangle. Assume that such a quadrangle exists. Since $v_{0}$ only
  shares one corner with the outer face of $L$ (this was the case for
  $C$ already), {and since $v_0$ is adjacent to $v_1$}, one such quadrangle must be incident to
  $v_1$. But the above label conditions on the neighbours of $v_0$ and
  $v_1$ force this quadrangle to have labels 0 and 1 only, in $L$  and
  thus in $C$. This  contradicts the fact that $C$ is colourful.
Hence $v_0$ is only adjacent to {inner} digons (and to the outer face), and since it shares only one
corner with the outer face, its only neighbour {in $L$} is  $v_{1}$ (see Figure~\ref{fig:CinPD_proof}).

Let $D$ be the labelled map constructed from $L$ by moving the root
edge anticlockwise one place around the outer face, removing the old
root vertex $v_{0}$ of $L$ and all incident edges, and replacing each
vertex label $\ell$ with $1-\ell$. Now the root vertex has label $0$,
and all its neighbours have label $1$. The outer face {still has} labels
0 and 1. Each inner quadrangle of $D$
corresponds to an inner quadrangle of ${C}$, and is therefore
colourful. Hence $D$ is a colourful D-patch. 

Let $2j$ be the outer degree of $S$. Then $j$ is also
  the number of inner digons in $L$.  Let $i\le j$ be the number of
{inner} digons left in $D$ after deleting $v_0$. Then the number
of {inner} digons in $L$ {that} are incident to $v_{0}$ (and $v_1$) is $j-i$. Therefore,
the degree of $v_{0}$ in both $L$ and ${C}$ is $j-i+1$. 

{Conversely, taking  a colourful D-patch $D$ with $i$ inner digons and a colourful shifted
patch $S$ of outer degree $2j$ with $j\ge i$ we construct the corrresponding C-patch $C$ as follows:
\begin{itemize}
\item we first construct a map $D'$ by replacing each label $\ell$ in $D$ with $1-\ell$,
\item next we construct a map $L$ with a new vertex $v_0$, joined
  to the root vertex of {$D'$} by $j-i+1$ edges,
  \item finally we insert $S$ into
  $L$ to create the corresponding patch ${C}$ (the
choice of the corner where the subpatch extraction takes place being
canonical). 
\end{itemize}}
As already explained, the degree of the root vertex in $C$ is
$j-i+1$. The outer degree of $C$ is the outer degree of $D$ plus 2,
and the number of inner quadrangles in $C$ is $j$ plus the number of
inner quadrangles in $S$ and $D$. Hence, with the obvious notation,
\[
C(t,x,y)= \sum_{\od(S)\ge \dig(D)} x^{\od(S)-\dig(D)+1}y^{1+\od(D)}
t^{\qu(S)+\qu(D)+\od(S)},
\]
and this  gives the 
equation of the lemma, since shifted patches are counted by $\Pgf(t,y)$.
\end{proof}

\begin{proof}[Proof of Theorem~\ref{thm:systemG}.] Given that the
  initial condition $\Pgf(t,0)=1$ is obvious (it accounts for the
  atomic patch), we have now proved
  all functional equations. It remains to prove that, together with
  the conditions on the rings that contain $\Pgf$, $\Cgf$ and $\Dgf$, they determine a unique 3-tuple of series.  Let us denote by $p_{j,n}$ the coefficient of $y^j t^n$ in
$\Pgf(t,y)$, and similarly for $\Cgf$ and $\Dgf$. These quantities
should be thought {of} as elements of $\qs$ (for $\Pgf$),
of $\qs[x]$ (for $\Cgf$) and of $\qs[[x]]$ (for $\Dgf$).  We will prove by
induction {on} $N\ge 0$ that
 $p_{j,n}$, $c_{j,n}$ and $d_{j,n}$ are completely determined for $j+n
 \le N$ --- we say \emm up to order, $N$.

First take  $N=0$. The third equation
of the system shows that $\Cgf$ is a multiple of $y$. In particular,
$c_{0,0}=0$. The second
equation then implies that ${\Dgf}-1$ is also a multiple of $y$. In
particular, $d_{0,0}=1$.
Finally, the initial condition ${\Pgf}(t,0)=1$ gives $p_{0,0}=1$.
Now assume that the induction hypothesis holds for some $N\ge 0$, and let us prove it for
$N+1$. 

The third equation, with its factor $y$, allows us to determine
${\Cgf}(t,x,y)$ up to order $N+1$. By construction, the coefficients $c_{j,N+1-j}$
 will be polynomials in $x$. Then the second equation gives
$\Dgf(t,x,y)$ up to the same order. The first equation seems to raise
a problem, because of the division by $y$. But, combined with the
third equation, it reads
\begin{align*}
\Pgf(t,y)&=[x^0] \Pgf(t,tx) \Dgf(t,1/x,y)\\
&=[x^0] \Pgf(t,tx)+ [x^0] \Pgf(t,tx) (\Dgf(t,1/x,y)-1)\\
&=\Pgf(t,0)+ [x^0] \Pgf(t,tx) (\Dgf(t,1/x,y)-1).
\end{align*}
Now $\Pgf(t,0)=1$ is known, and since $\Dgf(t,1/x,y)-1$ is a multiple
of $y$, knowing $\Pgf(t,tx)$ up to order $N$ and
  $\Dgf(t,1/x,y)$ up to order $N+1$ suffices to determine
$\Pgf(t,y)$ up to order $N+1$. This completes our
induction.
\end{proof}

\section{Solution for general Eulerian orientations}
\label{sec:sol-gen}
We are now about to solve the system of Theorem~\ref{thm:systemG},
thus proving, in particular, that the \gf\ $\Ggf(t)$ of 
Eulerian orientations is indeed given by Theorem~\ref{thm:gen}. As in
Section~\ref{sec:sol4}, the
third equation of the system leads us to introduce variants of the
series $\Pgf$, $\Cgf$ and $\Dgf$, defined again by
\beq\label{natural-nn-gen}
\Pnn(t,y)=t\,\Pgf(t,ty), \qquad \Cnn(t,x,y)=\Cgf(t,x,ty), \qquad
\Dnn(t,x,y)=\Dgf(t,x,ty).
\eeq
Of course, if we determine $\Pnn, \Cnn$ and $\Dnn$, then $\Pgf, \Cgf$
and $\Dgf$ are completely determined as well.
\begin{Theorem}\label{thm:solg}
Let $\Rgf(t)\equiv \Rgf$ be the unique formal power series with constant
  term $0$ satisfying
\[
t= \sum_{n \ge 0} \frac 1 {n+1} {2n \choose n}^2 \Rgf^{n+1}.
\]
  Then the above series $\Pnn$, $\Cnn$ and $\Dnn$ are:
\[
\Pnn(t,y)=\sum_{n\ge 0}\sum_{j=0}^n \frac 1{n+1} {2n\choose n}{2n-j
  \choose n} y^j \Rgf^{n+1},
\]
\[
\Cnn(t,x,y) =1 -\exp\left( 
- \sum_{n\ge 0} \sum_{j=0}^n \sum_{i=0}^{n}
  \frac 1 {n+1} {2n-i \choose n} {2n-{j}\choose n}  x^{i+1}  y^{j+1}\Rgf^{n+1} \right),
\]
\[
\Dnn(t,x,y) =\exp\left( \sum_{n\ge 0} \sum_{j=0}^n \sum_{i\ge0}
  \frac 1 {n+1} {2n-j \choose n} {2n+i+1\choose n}  x^i   y^{j+1}\Rgf^{n+1}\right).
\]
The \gf\ of  Eulerian orientations, counted by edges, is
\[
{\Ggf}(t)= \frac 1{4t^2}\left( t-2t^2-\Rgf(t)\right).
\]
\end{Theorem}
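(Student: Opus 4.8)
The plan is to follow the proof of Theorem~\ref{thm:solQ} in spirit: we take the three series $\Pnn,\Cnn,\Dnn$ written in the statement, note that they lie in the rings $\qs[[y,t]]$, $\qs[x][[y,t]]$ and $\qs[[x,y,t]]$ required by Theorem~\ref{thm:systemG} (immediate since $\Rgf=O(t)$), and verify that they satisfy the three functional equations and the initial condition of that theorem. After the rescaling~\eqref{natural-nn-gen}, these become
\[\Pnn=\tfrac1y[x^1]\Cnn,\qquad \Dnn=\frac1{1-\Cnn(t,\tfrac1{1-x},y)},\qquad \Cnn=xy\,[x^{\ge0}]\!\left(\Pnn(t,x)\,\Dnn(t,\tfrac1x,y)\right),\]
together with $\Pnn(t,0)=t$. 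As in Section~\ref{sec:sol4}, the first three identities do not involve $t$ explicitly and will be checked as identities between series in $\Rgf$, $x$ and $y$, while the initial condition $\Pnn(t,0)=t$ is nothing but the defining equation of $\Rgf$, since $[y^0]\Pnn=\sum_{n\ge0}\frac1{n+1}\binom{2n}{n}^2\Rgf^{n+1}$.

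The first equation is immediate: writing $\Cnn=1-\exp(-T)$ with $T=O(x)$, one has $[x^1]\Cnn=[x^1]T$, and the coefficient $i=0$ in the exponent of $\Cnn$ gives exactly $y\,\Pnn$. The second equation is handled as in the quartic case: taking logarithms and comparing the exponents of $\Dnn$ and of $1-\Cnn(t,\tfrac1{1-x},y)$, it reduces, for each fixed $n$ (the factor $\binom{2n-j}{n}$ being common), to
\[\sum_{i=0}^{n}\binom{2n-i}{n}\frac1{(1-x)^{i+1}}=\sum_{i\ge0}\binom{2n+i+1}{n}x^i,\]
which follows from the Chu--Vandermonde-type identity~\eqref{not-chu} (take $k=2n$ and extract the coefficient of $x^m$ on both sides, using $\binom{2n+m+1}{n+m+1}=\binom{2n+m+1}{n}$).

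The third equation is the heart of the argument and the step I expect to be the main obstacle. Following Section~\ref{sec:sol4}, I would first turn the inner sum over $i$ in $\log\Dnn$ into a rational function of $x$, the analogue of~\eqref{id:ratQ} being $\sum_{i\ge0}\binom{2n+i+1}{n}x^i=\frac1{x^{n+1}(1-x)^{n+1}}-L_n(1/x)$ with $L_n(1/x)$ a polynomial in $1/x$ without constant term. Setting $\Ugf=\Rgf/(x(1-x))$, this exhibits $\log\Dnn=A(\Ugf,y)-B(\Rgf,1/x,y)$, where $A$ is the series of~\eqref{A:def} and $B$ collects only negative powers of $x$. Lemma~\ref{lem:Cat} then yields $\exp(A(\Ugf,y))=1/(1-y\,\Ugf\Cat(\Ugf))$, so that $\Dnn=\exp(-B)/(1-y\,\Ugf\Cat(\Ugf))$; extracting $[y^1]$ gives the clean counterpart of~\eqref{id:PQ}, namely $[y^1]\Dnn=\Ugf\Cat(\Ugf)-\frac1x\Pnn(t,\tfrac1x)$. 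Substituting $x\mapsto1/x$ and forming $\Pnn(t,x)\Dnn(t,\tfrac1x,y)$, the plan is to use this relation and the above factorization to rewrite the product, and then to take its non-negative part in $x$. The delicate point, exactly as in the quartic case, is the bookkeeping of the $[x^{\ge0}]$ operation: one must check that the contribution of the correction $B$ (which becomes a polynomial in $x$ after $x\mapsto 1/x$) cancels against the negative powers produced by $\Pnn(t,x)$ and the $\Cat$-factor, leaving precisely the truncated double sum $\sum_{j=0}^n\sum_{i=0}^n$ that defines $\Cnn$. I expect this to reduce once more to a finite binomial summation of type~\eqref{not-chu}.

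Finally, the expression for $\Ggf$ follows from the last identity of Theorem~\ref{thm:systemG}. We have $\Qc(t)=[y^1]\Pgf(t,y)-1=\frac1{t^2}[y^1]\Pnn-1$, and, using $\binom{2n-1}{n}=\frac12\binom{2n}{n}$,
\[[y^1]\Pnn=\sum_{n\ge1}\frac1{n+1}\binom{2n}{n}\binom{2n-1}{n}\Rgf^{n+1}=\frac12\sum_{n\ge1}\frac1{n+1}\binom{2n}{n}^2\Rgf^{n+1}=\frac12(t-\Rgf)\]
by the definition of $\Rgf$. Hence $\Qc(t)=\frac{t-\Rgf}{2t^2}-1$, and since $\Qc=2\Ggf$ by Corollary~\ref{cor:colourful}, we obtain $\Ggf(t)=\frac1{4t^2}(t-2t^2-\Rgf)$, as claimed.
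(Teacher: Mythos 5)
Your strategy coincides with the paper's: substitute the candidate series into the system of Theorem~\ref{thm:systemG} and verify the four equations. Your treatment of the first equation, of the second (via~\eqref{not-chu}), of the initial condition $\Pnn(t,0)=t$, and your final derivation of $\Ggf(t)$ from $[y^1]\Pnn=\frac12(t-\Rgf)$ all match the paper and are correct.

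The gap is in the third equation, which you correctly single out as the crux but leave unfinished. The missing idea is that the ``correction term'' $B(\Rgf,1/x,y)$ in your decomposition $\log\Dnn(t,x,y)=A(\Ugf,y)-B(\Rgf,1/x,y)$, with $\Ugf=\Rgf/(x(1-x))$, is not just some series in negative powers of $x$: it is \emph{exactly} $-\log\bigl(1-\Cnn(t,1/x,y)\bigr)$, so that
\[
\Dnn(t,x,y)=\exp\bigl(A(\Ugf,y)\bigr)\,\bigl(1-\Cnn(t,1/x,y)\bigr).
\]
This is the paper's identity~\eqref{id:CD-G}, and it is what closes the argument: writing the third equation in the equivalent form $\Cnn(t,1/x,y)=[x^{<0}]\bigl(\tfrac yx\,\Pnn(t,1/x)\,\Dnn(t,x,y)\bigr)$, one substitutes $\tfrac1x\Pnn(t,1/x)=\Ugf\Cat(\Ugf)-[y^1]\Dnn(t,x,y)$ (your relation, the paper's~\eqref{id:P-G}), drops the terms that are power series in $x$ under $[x^{<0}]$, converts $1-y\,\Ugf\Cat(\Ugf)$ into $\exp(-A(\Ugf,y))$ by Lemma~\ref{lem:Cat}, and lands on $[x^{<0}]\bigl(-1+\Cnn(t,1/x,y)\bigr)=\Cnn(t,1/x,y)$, since $\Cnn(t,1/x,y)$ contains only negative powers of $x$. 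Contrary to your expectation, no further binomial summation of type~\eqref{not-chu} is involved; the conclusion is a purely formal extraction argument, and without recognizing $B$ as $-\log(1-\Cnn(t,1/x,y))$ your proposed ``bookkeeping of the cancellation'' has no visible endpoint. A secondary remark: keeping the equation in the form $\Cnn(t,x,y)=xy\,[x^{\ge0}]\bigl(\Pnn(t,x)\Dnn(t,1/x,y)\bigr)$ and substituting $x\mapsto1/x$ into~\eqref{id:P-G}, as you propose, turns $\Ugf$ into $\Rgf x^2/(x-1)$, which mixes positive and negative powers of $x$ and obscures the extraction; this is precisely why the paper passes to the reciprocal form, keeping $\Dnn(t,x,y)$ as a genuine power series in $x$.
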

\noindent{\bf Remark.} Observe that the series $\Cnn(t,x,y)$ is
  symmetric in $x$ and $y$. Let us give a combinatorial explanation
  for this, illustrated in Figure \ref{fig:CpfromC_example}. Consider
  a colourful C-patch $C$ with root vertex $v_{0}$, and form a
  colourful labelled quadrangulation $C'$ by adding a vertex $v_{2}$
  with label $2$ to the outer face of $C$ and joining it to each outer
  corner of $C$ labelled 1.  The generating function
  $\Cnn(t,x,y)=\Cgf(t,x,ty)$ then counts the possible objects $C'$ by
  the number of quadrangles (variable~$t$), the degree of the root
  vertex $v_{0}$ (variable $x$) and the degree
  of the new vertex $v_{2}$ (variable~$y$). Moreover, the object $C'$ can be any
  colourful quadrangulation in which the outer face has labels $0$,
  $1$, $2$, $1$ and each vertex that neighbours either $v_{0}$ or
  $v_{2}$ is labelled $1$.  The transformation $\ell \mapsto 2-\ell$
  then explains why the   generating function $\Cnn(t,x,y)$ is
  symmetric in $x$ and   $y$.
  \begin{figure}[ht]
\setlength{\captionindent}{0pt}
   \includegraphics[scale=0.7]{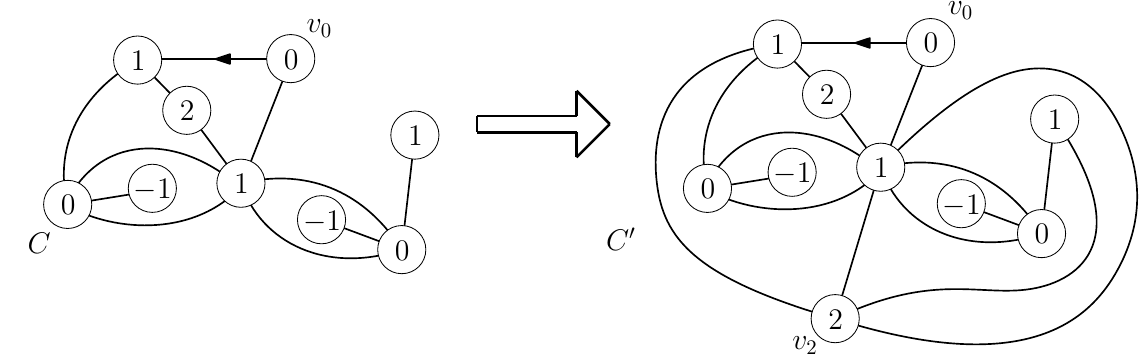} 
   \caption{An example of the transformation from a colourful C-patch $C$ to a colourful labelled quadrangulation $C'$ from the remark below theorem \ref{thm:solg}.
}
   \label{fig:CpfromC_example}
\end{figure}
\begin{proof}[Proof of Theorem~\ref{thm:solg}]
We argue as in the proof of Theorem~\ref{thm:solQ}. Defining $\Pnn$, $\Cnn$
and $\Dnn$ as above, we first observe that the series $\Pgf$, $\Cgf$
and $\Dgf$ defined by~\eqref{natural-nn-gen} belong respectively to the rings
$\qs[[y,t]]$, $\qs[x][[y,t]]$ and $\qs[[x,y,t]]$, as prescribed in
Theorem~\ref{thm:systemG}. Hence it suffices to prove that they
satisfy the desired system, which reads
  \begin{align}
\Pnn(t,y)&=\frac{1}{y}[x^1]\Cnn(t,x,y), \nonumber\\
\Dnn(t,x,y)&=\frac{1}{1-\Cnn\left(t,\frac{1}{1-x},y\right)}, \nonumber\\
\Cnn(t,x,y)&=xy [x^{\ge 0}] \Pnn(t,x) \Dnn(t, 1/x, y), \nonumber\\
\Pnn(t,0)&=t.\nonumber
\end{align}
Note that the first three equations do not  explicitly involve the
variable $t$: we will prove them without
resorting to the definition of $\Rgf$. But the fourth
equation, namely the initial condition $\Pnn(t,0)=t$, does
  involve $t$, and in fact holds precisely by definition of $\Rgf$.

The first equation is again straightforward, and the second follows
from~\eqref{not-chu} again. Now consider the third one. Since
it is more natural to handle series in $x$ rather than in $1/x$, we will
show instead that
\begin{align}
  \Cnn(t,1/x,y)&=y/x [x^{\le 0}] \Pnn(t,1/x) \Dnn(t, x, y)  \nonumber \\
&=  [x^{<0}] \left(y/x\,\Pnn(t,1/x) \Dnn(t, x, y)\right).\label{CPD-gen}
\end{align}
In order to prove~\eqref{CPD-gen}, we use
identities that are similar to those used in the proof
of~\eqref{eq:D}. The counterpart of~\eqref{id:ratQ} is:
\[
\sum_{i\ge 0} {2n+i+1\choose n} x^i =\frac 1{x^{n+1}(1-x)^{n+1}} -
\sum_{\ell=0} ^n {2n-\ell \choose n} \frac 1 {x^{\ell+1}}.
\]
 The counterpart of~\eqref{Dnn-expr} is: 
\beq\label{id:CD-G}
\Dnn(t,x,y) =\exp\left( A(\Ugf,y)\right) \big( 1-\Cnn(t, 1/x,y)\big) ,
\eeq
with $\Ugf=\frac{\Rgf}{x(1-x)}$
and $A(u,y)$ is still given by~\eqref{A:def}. This is indeed an analogue of~\eqref{Dnn-expr}, since
\[ 
1-\Cnn(t, 1/x,y)= \exp\left( 
- \sum_{n\ge 0} \sum_{j=0}^n \sum_{i=0}^{n}
  \frac 1 {n+1} {2n-i \choose n} {2n-{j}\choose n}  \frac 1{x^{i+1}}
  y^{j+1}\Rgf^{n+1} \right)
\]
can be written as $\exp(-B(\Rgf,1/x,y))$ where $B(\Rgf,1/x,y)$ only
involves negative powers of $x$.
By extracting  the coefficient of $y$ from~\eqref{id:CD-G}, we find the counterpart of~\eqref{id:PQ}: 
\beq\label{id:P-G}
[y] \Dnn(t,x,y) =  \Ugf \Cat(\Ugf)-\frac 1 x  \Pnn\left(t, \frac 1
  x\right),
\eeq
where $ \Cat(u)$ is still the Catalan series $ \sum_{n\ge 0}
\frac{u^n}{n+1}{2n \choose n}$.

With these identities at hand, we can now prove~\eqref{CPD-gen}:
\begin{align*}
  [x^{<0}] \left(y/x\,\Pnn(t,1/x) \Dnn(t,x,y) \right)
&=  [x^{<0}]\left(y\Dnn(t,x,y) \left(  \Ugf \Cat(\Ugf)-[y]
  \Dnn(t,x,y) \right)\right) \qquad \hbox{by } \eqref{id:P-G},
\\
&=  [x^{<0}] \left(y\Dnn(t,x,y) \  \Ugf
  \Cat(\Ugf)\right)
\\
&=  [x^{<0}] \left(-\Dnn(t,x,y)(1-y   \Ugf
  \Cat(\Ugf))\right)
\\
&=  [x^{<0}] \left(-\Dnn(t,x,y)\exp(-A(\Ugf,y)) \right)
  \qquad \qquad\hbox{by Lemma } \ref{lem:Cat},
\\
&= [x^{<0}] \left(- 1+ \Cnn(t,1/x,y)\right)  \qquad\qquad\qquad\qquad
  \hbox{by }\eqref{id:CD-G},
\\&=\Cnn(t,1/x,y).
\end{align*}

We have thus proved the announced expressions of $\Pnn$, $\Cnn$
  and $\Dnn$, which in turn characterise the \gfs\ $\Pgf$, $\Cgf$ and
  $\Dgf$ of colourful patches of various types. Now the last equation
  of Theorem~\ref{thm:systemG} gives
  \begin{align*}
    2\Ggf(t)= \Qc(t)&= \frac 1 {t^2} [y^1] \Pnn(t,y)-1
\\
&=\frac 1 {t^2} \sum_{n\ge 1}  \frac 1 {n+1}{2n\choose n}{2n-1\choose
  n} \Rgf^{n+1}-1
\\
&= \frac 1 {2t^2} \sum_{n\ge 1}  \frac 1 {n+1}{2n\choose
  n}^2\Rgf^{n+1}-1
\\
&= \frac 1 {2t^2} \left( t-\Rgf-2t^2\right).
  \end{align*}
The expression given in Theorem~\ref{thm:solg} (and in Theorem~\ref{thm:gen}) for
$\Ggf(t)$ follows.
\end{proof}

\section{Nature of the series and asymptotics}
\label{sec:asympt}

\subsection{Nature of the series}

We begin by proving that the series $\Qgf(t)$ and $\Ggf(t)$ that count
respectively 
quartic and general Eulerian orientations satisfy
non-linear differential equations of order 2, as claimed in
Theorems~\ref{thm:4} and~\ref{thm:gen}.
Both series are expressed in terms of a series $\Rgf$ that satisfies
\[
\Omega(\Rgf)=t,
\]
for some hypergeometric series $\Omega$. In the quartic case (Theorem~\ref{thm:4}), 
\beq\label{omega4}
\Omega(r)= \sum_{n\ge 0} \frac 1 {n+1}{2n\choose n}{3n\choose n} r^{n+1}
\eeq
satisfies
\[
6 \Omega(r)+r(27r-1)\Omega''(r) {=0},
\]
from which we derive that 
\[
{ \Rgf(27\,\Rgf -1) \Rgf ''}=6t\,\Rgf '^3.
\]
Using $3t^2\Qgf(t)=t-3t^2-\Rgf(t)$, this gives indeed a second order DE
for $\Qgf(t)$, of degree $3$.

For general Eulerian orientations (Theorem~\ref{thm:gen}), we still
have $\Omega(\Rgf)=t$, with
\beq\label{omegaG}
\Omega(r)= \sum_{n\ge 0} \frac 1 {n+1}{2n\choose n}^2 r^{n+1}
\eeq
satisfies
\[
4 \Omega(r)+r(16r-1)\Omega''(r){=0},
\]
from which we derive that 
\[
{ \Rgf (16\, \Rgf -1) \Rgf ''}=4t\, \Rgf '^3.
\]
Using $4t^2\Ggf(t)=t-2t^2-\Rgf(t)$, this gives a second order DE
for $\Ggf(t)$, of degree $3$.

The fact that neither $\Qgf(t)$ nor $\Ggf(t)$ solve a non-trivial \emm
linear, DE  will follow from the asymptotic behaviour of their
coefficients, established in the next subsection: indeed, the
logarithm occurring at the denominator prevents this behaviour from
being that of the coefficients of a D-finite series~\cite[p.~520
  and~582]{flajolet-sedgewick}.

We can also describe the nature of the multivariate series counting
patches. 
\begin{Proposition}
  The \gfs\ $\Pgf(t,y)$, $\Cgf(t,x,y)$ and $\Dgf(t,x,y)$ counting
  patches of various types, and expressed in Theorem~\ref{thm:solQ}
  through the identities~\eqref{natural-nn}, are
  D-algebraic. The same holds for their
  colourful counterparts, expressed in Theorem~\ref{thm:solg}.
\end{Proposition}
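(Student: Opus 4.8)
The plan is to read the D-algebraicity directly off the closed forms of Theorems~\ref{thm:solQ} and~\ref{thm:solg}, using two facts: $\Rgf$ is D-algebraic in $t$, and each of $\Pnn,\Cnn,\Dnn$ is built from a \emph{D-finite} series by substituting $\Rgf$ for one variable and, for $\Cnn$ and $\Dnn$, applying an exponential. First I would recall from Section~\ref{sec:asympt} that $\Rgf$ satisfies a polynomial differential equation in $t$; equivalently, the differential field $\qs(t,\Rgf,\Rgf',\Rgf'',\dots)$ has finite transcendence degree over $\qs(t)$.

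The core of the argument is a composition lemma: \emph{if $\Phi(u,x,y)$ is D-finite and $g(t)$ is D-algebraic, then $\Phi(g(t),x,y)$ is D-algebraic in each of $t,x,y$}. I would apply it with $g=\Rgf$ and with $\Phi$ equal to the formal series obtained from the expression of $\Pnn$ (resp. from the exponents of $1-\Cnn$ and of $\Dnn$) by replacing $\Rgf$ with a fresh variable $u$. In each case the summand is a proper hypergeometric term in the summation indices times a monomial in $u,x,y$, so these series are D-finite in each variable by the theory of holonomic functions~\cite{Li88,lipshitz-df}. To prove the lemma I would use that D-finiteness makes the $\qs(u,x,y)$-module spanned by all partial derivatives of $\Phi$ finite-dimensional; specialising $u=\Rgf$ and differentiating in $t$ through the chain rule, every $\partial_t^k\Phi(\Rgf,x,y)$ then lies in the field generated over $\qs(t,x,y)$ by the finitely many specialised derivatives of $\Phi$ and by $\Rgf,\Rgf',\dots$. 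This field has finite transcendence degree over $\qs(t,x,y)$, so the iterates $\Phi(\Rgf,x,y),\partial_t\Phi(\Rgf,x,y),\partial_t^2\Phi(\Rgf,x,y),\dots$ are algebraically dependent, which is exactly D-algebraicity in $t$. The variables $x$ and $y$ are handled the same way, and more easily, since $\Rgf$ is free of $x,y$, so only $\Rgf$ itself (not its derivatives) enters the field.

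It then remains to pass through the exponentials and through the change of variables~\eqref{natural-nn}. The exponents $T_C$ and $T_D$ obtained by specialising $u=\Rgf$ are D-algebraic by the lemma, and the class of D-algebraic series is closed under $\exp$: adjoining $\exp(T)$ to the derivative field of $T$ raises its transcendence degree by at most one, since $(\exp T)'=T'\exp T$. Hence $\Cnn=1-\exp(-T_C)$ and $\Dnn=\exp(T_D)$ are D-algebraic, and so is $\Pnn$. Finally, by~\eqref{natural-nn} one has $\Pgf(t,y)=\tfrac1t\Pnn(t,y/t)$, $\Cgf(t,x,y)=\Cnn(t,x,y/t)$ and $\Dgf(t,x,y)=\Dnn(t,x,y/t)$; the substitution $y\mapsto y/t$ lands in $\qs(t,y)$ and multiplication by $1/t$ is rational, so they introduce no new transcendence, and one checks—again using that all partial derivatives of the underlying D-finite $\Phi$ live in a single finite module—that these operations preserve D-algebraicity in every variable. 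The colourful series of Theorem~\ref{thm:solg} have formulas of exactly the same shape (hypergeometric summands composed with the corresponding $\Rgf$), so the identical argument applies.

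The step I expect to be most delicate is the composition lemma, and specifically D-algebraicity in the main variable $t$: unlike $x$ and $y$, here $t$ enters only through $\Rgf$, which is D-algebraic but (being the compositional inverse of a hypergeometric series) presumably not D-finite, so one cannot argue by the simpler closure of D-finiteness under algebraic substitution. The transcendence-degree bookkeeping above is what makes this work, the real content being the control of the finite module of partial derivatives of $\Phi$ together with the finitely many derivatives of $\Rgf$; for the general closure properties underlying these manipulations I would refer to~\cite[Sec.~6.1]{BeBMRa17}.
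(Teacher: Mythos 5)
Your argument is correct and is essentially the paper's own proof, which simply invokes closure of D-algebraic series under composition (citing~\cite[Prop.~29]{BeBMRa17}): you have unpacked that citation by proving the composition lemma via transcendence-degree bookkeeping, together with the (standard) closure under $\exp$ and under rational substitutions such as $y\mapsto ty$. The only difference is one of detail, not of route, and your identification of the composition step with a D-algebraic inner series $\Rgf$ as the delicate point is exactly where the cited general result does the work.
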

\begin{proof}
  This follows by composition of D-algebraic series (see,
  e.g.,~\cite[Prop.~29]{BeBMRa17}). 
\end{proof}

Note that both series $\Pnn(t,y)$ (in the general and colourful cases)
are even D-finite \emm as functions
of $y$ and $\Rgf$,. The other two series $\Cnn(t,x,y)$ and $\Dnn(t,x,y)$
are clearly D-algebraic as functions of $x$, $y$ and $\Rgf$, and it is
natural to wonder if they might be D-finite. After all, in
Lemma~\ref{lem:Cat} we have met  a series that is written as the exponential of a
hypergeometric series and is not only D-finite, but even
algebraic. 

In the one-variable setting, it is known that if $F(t)$ is
D-finite, then $\exp(\int F(t))$ is D-finite if and only if $F(t)$ is
in fact algebraic~\cite{singer86}.
We can use this criterion to prove, for instance, that the series
$\Dnn(t,0,1)$ of Theorem~\ref{thm:solQ} is not D-finite as a function
of $\Rgf$. Indeed,   $\Dnn(t,0,1)=D(\Rgf)$ with
\begin{align*}
  D(r) &=\exp\left(\sum_{n\ge 0} \sum_{j=0}^{n}  \frac 1 {n+1}
{2n-j\choose n} {3n-j+1 \choose 2n-j} r^{n+1}\right)
\\
&= \exp\left(\sum_{n\ge 0} \frac{3n+2}{2(n+1)^2} {2n\choose n} {3n+1
  \choose 2n}r^{n+1}\right)
\\
&=\exp\left(\int F(r)\right)
\end{align*}
where
\[
F(r)=\sum_{n\ge 0} \frac{3n+2}{2(n+1)} {2n\choose n} {3n+1
  \choose 2n}r^{n}.
\]
Then $D(r)$ is D-finite if and only if $F(r)$ is algebraic. But this
is not the case, as the coefficient of $r^n$ in $F(r)$ is asymptotic
to $c\, 27^n/n$, which reveals a logarithmic singularity in $F(r)$
(see~\cite{flajolet-context-free}). The same argument proves that
$\Cnn(t,1,1)$ is not a D-finite function of $\Rgf$. 

In the colourful case (Theorem~\ref{thm:solg}), we have
\[
\Dnn(t,0,1)= \exp\left( \sum_{n\ge 0} \frac 1 {n+1} {2n+1\choose n}^2 \Rgf^{n+1}
\right),
\]
and a similar asymptotic argument proves that this cannot be a
D-finite function of $\Rgf$. The same holds for $\Cnn(t,1,1)$.

\subsection{Asymptotics}
As mentioned in the introduction, the series $\Rgf$ of
Theorem~\ref{thm:4} already occurred in the map literature, more
precisely in the enumeration of quartic maps equipped with a spanning
forest~\cite{mbm-courtiel}. Its singular structure has been studied in details,
and the first part of the following result is the case $u=-1$ of~\cite[Prop.~8.4]{mbm-courtiel}. 
As in~\cite[Def.~VI.1, p.~389]{flajolet-sedgewick}, we call
\emm $\Delta$-domain of radius $\rho$, any domain of the form
$$
\{z :|z|<r, z\not = \rho \hbox{ and } |\Arg (z-\rho)|> \phi\}
$$
for some $r>\rho$ and $\phi \in (0, \pi/2)$.

\begin{Proposition}\label{prop:asymptR}
   The series $\Rgf$ of Theorem~\ref{thm:4} has radius $\rho= \frac {\sqrt 3} {12\pi}$. It is analytic in a
     $\Delta$-domain of radius $\rho$, and the following estimate
     holds in this domain, as $t\rightarrow \rho$:
\[ 
      \Rgf(t) - \frac 1 {27} \sim 
\frac 1 6\, \frac{1-t/\rho}{\log (1-t/\rho)}.
 \]
Consequently, the $n$th coefficient of $\Rgf$ satisfies, as
$n\rightarrow \infty$,
$$
r_n:=[t^n]\Rgf \sim - \frac 1 6 \frac {\mu^n}{n^2 \log ^2 n}
$$
with $\mu=1/\rho= 4\sqrt 3 \pi$.
\end{Proposition}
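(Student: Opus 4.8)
The plan is to split the proposition into its two halves. For the first half --- the radius $\rho=\sqrt3/(12\pi)$, analyticity of $\Rgf$ in a $\Delta$-domain, and the singular estimate $\Rgf(t)-\tfrac1{27}\sim\tfrac16(1-t/\rho)/\log(1-t/\rho)$ --- I would invoke the case $u=-1$ of \cite[Prop.~8.4]{mbm-courtiel}, exactly as the surrounding text announces. For completeness I would also indicate the self-contained route: writing $\Omega(r)=\sum_{n\ge0}\frac{1}{n+1}\binom{2n}{n}\binom{3n}{n}r^{n+1}$, one has $\Omega'(r)=\sum_{n\ge0}\frac{(3n)!}{(n!)^3}r^n$, whose coefficients satisfy $\frac{(3n)!}{(n!)^3}=\binom{2n}{n}\binom{3n}{n}\sim\frac{\sqrt3}{2\pi n}27^n$ by Stirling; hence $\Omega'(r)\sim-\frac{\sqrt3}{2\pi}\log(1-27r)$ as $r\to 1/27^-$. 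Since $\Rgf=\Omega^{-1}$ and $\Omega$ converges at its radius $1/27$ with $\Omega(1/27)=\rho$, writing $s=\tfrac1{27}-\Rgf$ and integrating $\rho-t=\int_{\Rgf}^{1/27}\Omega'(u)\,du\sim-\frac{\sqrt3}{2\pi}s\log s$ inverts (using $\rho=\sqrt3/(12\pi)$) to the announced local expansion; the only delicate constant is $\Omega(1/27)=\sqrt3/(12\pi)$, obtained by evaluating the corresponding hypergeometric integral.

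The second, ``consequently'' half is where the actual work lies, and I would obtain it by singularity analysis (transfer theorems, \cite[Ch.~VI]{flajolet-sedgewick}). Since $1/27$ is a constant it contributes nothing for $n\ge1$, so $r_n=[t^n]\Rgf=[t^n](\Rgf-\tfrac1{27})$. Rescaling $t=\rho z$ reduces everything to the singular model $\phi(z)=\dfrac{1-z}{\log(1-z)}$ near $z=1$, via $r_n\sim\tfrac16\,\rho^{-n}[z^n]\phi$ with $\rho^{-1}=\mu=4\sqrt3\,\pi$; the $\Delta$-analyticity stated in the first half (and the fact that the estimate holds throughout the $\Delta$-domain, not merely on the real axis) is exactly what licenses this transfer.

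The key step is then to compute $[z^n]\phi$, and here the main obstacle appears: $\phi(z)=-(1-z)\bigl(\log\frac1{1-z}\bigr)^{-1}$ is an \emph{exceptional} logarithmic singularity, the exponent $+1$ of $(1-z)$ being a non-negative integer, so the naive transfer formula carries a factor $1/\Gamma(-1)=0$ and the leading term vanishes. I would resolve this with the integral representation
\[
\Bigl(\log\tfrac1{1-z}\Bigr)^{-1}=\int_0^\infty (1-z)^v\,dv,
\]
giving $[z^n]\phi=-\int_0^\infty[z^n](1-z)^{v+1}\,dv$. Using $[z^n](1-z)^{v+1}=\Gamma(n-v-1)/\bigl(\Gamma(-v-1)\Gamma(n+1)\bigr)\sim n^{-v-2}/\Gamma(-v-1)$ together with the local behaviour $1/\Gamma(-v-1)\sim v$ as $v\to0^+$, a Laplace/dominant-balance argument concentrated at $v=0$ yields
\[
[z^n]\phi\sim-\,n^{-2}\!\int_0^\infty v\,e^{-v\log n}\,dv=-\frac{1}{n^2(\log n)^2}.
\]
Equivalently one may quote the logarithmic-scale transfer theorems of \cite[Ch.~VI]{flajolet-sedgewick} for this exceptional case. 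Combining, $r_n\sim\tfrac16\mu^n\bigl(-1/(n^2\log^2n)\bigr)=-\tfrac16\,\mu^n/(n^2\log^2n)$, as claimed.

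The technical heart --- and the step I expect to need the most care --- is justifying the interchange of $\int_0^\infty dv$ with coefficient extraction and the uniformity (in $n$) of the Laplace estimate, equivalently making the exceptional-case transfer rigorous rather than formal: one must check that the contributions from $v$ bounded away from $0$ are genuinely negligible, and that the error term in the first-half estimate transfers to a quantity of order $o\bigl(\mu^n/(n^2\log^2n)\bigr)$. Both points are standard within the singularity-analysis framework but deserve explicit verification.
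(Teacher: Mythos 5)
Your proposal is correct and follows essentially the same route as the paper: the singular expansion is taken from the case $u=-1$ of \cite[Prop.~8.4]{mbm-courtiel} (equivalently, derived by inverting $\Omega(\Rgf)=t$ using the logarithmic singularity of $\Omega'$ at $r=1/27$), and the coefficient asymptotics then follow by Flajolet--Odlyzko singularity analysis. The only difference is that you spell out the exceptional logarithmic transfer $[z^n]\frac{1-z}{\log(1-z)}\sim -\frac{1}{n^2(\log n)^2}$ via the integral representation, a step the paper leaves entirely to the cited references; your computation of it (including the signs coming from $1/\Gamma(-1-v)\sim v$) is accurate.
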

Observe that this provides the asymptotic behaviour of the numbers
$q_n$ of Theorem~\ref{thm:4} since $q_n= -r_{n+2}/3$. The
correspondence between the singular behaviour of $\Rgf(t)$ near its
dominant singularity $\rho$ and the asymptotic behaviour of its
coefficients relies on Flajolet and Odlyzko's \emm singularity
analysis, of \gfs~\cite{flajolet-odlyzko,flajolet-sedgewick}. The
singular behaviour of $\Rgf$ near $\rho$ is obtained using the
inversion relation $\Omega(\Rgf(t))=t$, where the series $\Omega$,
given by~\eqref{omega4}, has radius $1/27$ and satisfies
\[
\Omega\left(\frac 1 {27}(1-\vareps)\right)= \frac{\sqrt 3}{12\pi}+\frac{\sqrt
  3}{54 \pi} \vareps\log \vareps + O(\vareps)
\]
as $\vareps \rightarrow 0$.

For general Eulerian orientations, we have a similar result.
\begin{Proposition}\label{prop:asympt-gen}
   The series $\Rgf$ of Theorem~\ref{thm:gen} has radius $\rho= \frac {1} {4\pi}$. It is analytic in a
     $\Delta$-domain of radius $\rho$, and the following estimate
     holds in this domain, as $t\rightarrow \rho$:
\[ 
      \Rgf(t) - \frac 1 {16} \sim 
\frac 1 4\, \frac{1-t/\rho}{\log (1-t/\rho)}.
 \]
Consequently, the $n$th coefficient of $\Rgf$ satisfies, as
$n\rightarrow \infty$,
$$
r_n:=[t^n]\Rgf \sim - \frac 1 4 \frac {\mu^n}{n^2 \log ^2 n}
$$
with $\mu=1/\rho= 4 \pi$.
\end{Proposition}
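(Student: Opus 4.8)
The plan is to follow exactly the route sketched after Proposition~\ref{prop:asymptR}, replacing the hypergeometric series of the quartic case by the series $\Omega$ of~\eqref{omegaG}. The starting point is the inversion relation $\Omega(\Rgf(t))=t$ provided by Theorem~\ref{thm:solg}, with $\Omega(r)=\sum_{n\ge0}\frac1{n+1}{2n\choose n}^2 r^{n+1}$. First I would analyse $\Omega$ as an analytic function. Since ${2n\choose n}/4^n=(1/2)_n/n!$, the value at the radius is a Gauss evaluation:
\[
\Omega\Big(\tfrac1{16}\Big)=\frac1{16}\,{}_2F_1\Big(\tfrac12,\tfrac12;2;1\Big)=\frac1{16}\cdot\frac{\Gamma(2)\Gamma(1)}{\Gamma(3/2)^2}=\frac1{4\pi}=:\rho .
\]
This identifies the radius of $\Omega$ as $1/16$ and the candidate radius of $\Rgf$ as $\rho=1/(4\pi)$, consistently with $\Rgf(t)\to 1/16$ as $t\to\rho^-$.

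Next I would obtain the singular behaviour of $\Omega$ at $1/16$ by integrating that of its derivative $\Omega'(r)=\sum_{n\ge0}{2n\choose n}^2 r^n$. Since ${2n\choose n}^2\sim 16^n/(\pi n)$, one has $\Omega'(r)\sim-\tfrac1\pi\log(1-16r)$ as $r\to1/16$; writing $r=\tfrac1{16}(1-\vareps)$ and integrating from $r$ to $1/16$ gives
\[
\Omega\Big(\tfrac1{16}(1-\vareps)\Big)=\rho+\frac1{16\pi}\,\vareps\log\vareps+O(\vareps),\qquad \vareps\to0^+,
\]
the exact analogue of the expansion displayed after Proposition~\ref{prop:asymptR}. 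The $\Delta$-analyticity of $\Omega$ near $1/16$, needed to make this uniform in a sector, follows from the fact that $\Omega'$ is, up to elementary factors, a complete elliptic integral, hence continuable to a suitable slit plane.

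The third step is to invert. As $\Omega$ is real-analytic and strictly increasing on $[0,1/16)$ with $\Omega(1/16)=\rho$, its inverse $\Rgf$ maps $[0,\rho)$ onto $[0,1/16)$. Setting $\Rgf(t)=\tfrac1{16}(1-\vareps)$ turns $\Omega(\Rgf)=t$ into $\frac1{16\pi}\vareps\log\vareps\sim t-\rho=-\rho(1-t/\rho)$, that is $\vareps\log\vareps\sim-4(1-t/\rho)$. Solving this transcendental relation asymptotically yields $\vareps\sim-4(1-t/\rho)/\log(1-t/\rho)$, whence
\[
\Rgf(t)-\frac1{16}=-\frac{\vareps}{16}\sim\frac14\,\frac{1-t/\rho}{\log(1-t/\rho)},
\]
which is the claimed estimate. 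To pass rigorously from the real axis to a $\Delta$-domain I would check that $\Omega$ restricts to a conformal bijection of a neighbourhood of $1/16$ inside its $\Delta$-domain onto a $\Delta$-domain at $\rho$, so that $\Rgf=\Omega^{-1}$ is analytic there and the expansion holds throughout.

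Finally, the coefficient asymptotics follow from Flajolet and Odlyzko's singularity analysis~\cite{flajolet-odlyzko,flajolet-sedgewick}: the standard-scale function $\frac{1-t/\rho}{\log(1-t/\rho)}$ transfers to $-\rho^{-n}/\big(n^2(\log n)^2\big)$, giving
\[
r_n=[t^n]\Rgf\sim-\frac14\,\frac{\mu^n}{n^2(\log n)^2},\qquad \mu=1/\rho=4\pi .
\]
I expect the main obstacle to lie in the two analytic-continuation points rather than in the formal manipulations: showing that the expansion of $\Omega$ holds uniformly in a genuine $\Delta$-domain, and that $\Omega$ maps such a domain conformally onto a $\Delta$-domain at $\rho$, so that $\Rgf$ is $\Delta$-analytic and singularity analysis applies. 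Tied to this is the asymptotic inversion of $\vareps\log\vareps\sim-4(1-t/\rho)$, where the subdominant $\log\log$ corrections must be controlled uniformly in the sector; once these are in place the transfer step is routine.
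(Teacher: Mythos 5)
Your proposal follows the same route as the paper, which itself gives no details beyond citing the analogous argument in~\cite[Sec.~8.3]{mbm-courtiel}: the inversion relation $\Omega(\Rgf)=t$, the expansion $\Omega\bigl(\tfrac1{16}(1-\vareps)\bigr)=\tfrac{1}{4\pi}+\tfrac{1}{16\pi}\vareps\log\vareps+O(\vareps)$, asymptotic inversion of $\vareps\log\vareps\sim-4(1-t/\rho)$, and singularity analysis; all your constants check out. For the analytic-continuation step you rightly single out as the main obstacle, the paper's stated key ingredient is that $t-\Rgf(t)=2t^2+4t^2\Ggf(t)$ has non-negative coefficients, which is what lets one locate the dominant singularity and carry out the continuation to a $\Delta$-domain as in the cited reference.
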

As above, this gives the asymptotic behaviour of the numbers
$g_n$ of Theorem~\ref{thm:4} since $g_n= -r_{n+2}/4$.

The proof {closely follows} the proof of Proposition~\ref{prop:asymptR}
given in~\cite[Sec.~8.3]{mbm-courtiel}, and we will not give any
details. The series $\Omega$ is now given
by~\eqref{omegaG}, has radius of convergence $1/16$ and satisfies
\[
\Omega\left(\frac 1 {16}(1-\vareps)\right)= \frac{1}{4\pi}+\frac{1}{16 \pi} \vareps\log \vareps + O(\vareps).
\] 
One key ingredient is that $t-\Rgf(t)$ has
non-negative coefficients, which simply follows from the fact that
this series equals $2t^2+4t^2\Ggf(t)$, by Theorem~\ref{thm:gen}.

\section{Final comments and perspectives}
\label{sec:final}

We have {exactly solved} the problem of counting planar Eulerian
orientations, both in the general and in the quartic case. Our proof,
based on a guess-and-check approach, should not stay the only
proof. One should seek a better combinatorial understanding of
our results. Can one explain why the series $\Rgf$ of Theorem~\ref{thm:4}
also appears in the enumeration of quartic maps~$M$ weighted by their
Tutte polynomial $\Tpol_M(0,1)$? Can one explain the forms of the
series $\Cgf$ and~$\Dgf$ in Theorems~\ref{thm:solQ} and~\ref{thm:solg}? What about
more general vertex degrees? Can one interpolate between the results
of Theorems~\ref{thm:4} and~\ref{thm:gen}, given that the
second also counts a subclass of quartic Eulerian orientations (those
with no alternating vertex)?  In this final section we discuss the {quest for} bijections, and some aspects of interpolation.

\subsection{Bijections}
Our results reveal an unexpected connection
between Eulerian orientations of quartic maps (counted by the specialization
$|\!\Tpol(0,-2)|$ of their Tutte polynomial if we do not force
the orientation of the root edge~\cite[Sec.~3.6]{welsh-book}) and the specialization $\Tpol(0,1)$
of slightly larger maps. Let us be more precise: one of the series
considered in~\cite{mbm-courtiel} is
\[
F(t)=\sum_{M \small{\hbox{ quartic}}}t^{\ff(M)} \Tpol_M(0,1),
\]
which, in the classical interpretation of the Tutte polynomial~\cite{Bollobas:Tutte-poly,tutte-dichromate}, counts
quartic maps $M$ equipped with an \emm internally inactive, spanning
tree. Observe that $t$ records here the number of faces, which exceeds
the number of vertices by 2. Then it is proved that
\[
F'(t)= 4\sum_{i\ge 1} \frac 1{i+1} {3i \choose i-1} {2i+1 \choose i}
\Rgf^{i+1},
\]
where $\Rgf$ is the series of Theorem~\ref{thm:4}. There is also an
interpretation of $F(t)$ in terms of spanning forests rather than spanning
trees, but then some forests have a negative contribution:
\[
F(t)=\sum_{{M \small{\hbox{ quartic}}} \atop{F \small\hbox{ forest}}}
t^{\ff(M)} (-1)^{\cc(F)-1}
\]
where $\cc(F)$ is the number of connected components of the forest
$F$. One of the advantages of this description in terms of forests is
that it gives a direct interpretation of $t-\Rgf$. Indeed, if we
restrict the summation to forests not containing the root edge, then
we obtain a new series, denoted $H(t)$ in~\cite{mbm-courtiel}, which
satisfies
\[
H'(t)=2(t-\Rgf).
\]
Comparing with Theorem~\ref{thm:4} leads to the following statement: the number of Eulerian orientations of quartic maps
with $n$ faces  is $(n+1)/6$ times the {(signed) number} of
quartic maps with $n+1$ faces equipped with a spanning forest not
containing the root edge,  every forest $F$ being weighted by
$(-1)^{\cc(F)-1}$. This is illustrated in Figure~\ref{fig:bij} for
$n=3$. 

\begin{figure}[ht]
  \centering
   \scalebox{0.9}{\input{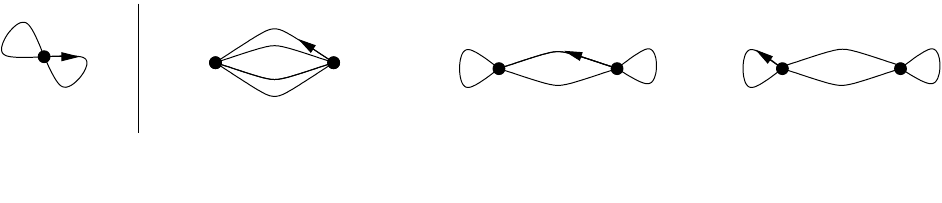_t}}
  \caption{Left: the only rooted quartic graph with $1$ vertex has 2
    Eulerian orientations (the orientation of the root is forced) and
    2 embeddings as a rooted map (with 3 faces). Right: the three
    rooted quartic graphs with 2 vertices, shown with the (signed)
    number of spanning forests avoiding the root edge. The number of
    embeddings as rooted planar maps is shown between parentheses. }
  \label{fig:bij}
\end{figure}

There is also an interpretation (and generalization) of $t-\Rgf(t)$ in terms of certain
trees~\cite[Sec.~5.1]{mbm-courtiel}. It involves a
parameter $u$, which is $-1$ for our series $\Rgf$.  In the forest
setting, $u$ counts the number of connected components (minus 1).

\begin{Proposition}
  Consider rooted plane ternary trees with leaves of two colours (say
  black and white, see Figure~\ref{fig:R4}). Define the \emm charge, of such a tree {to be} the
  {number of white leaves minus the number of black leaves. Call
  a tree of charge $1$ \emm balanced,}. Then the series $t-\Rgf(t)$ of
  Theorem~\ref{thm:4} counts, by the number of white leaves, balanced trees in which no proper subtree
  is balanced.

More generally, let $R(t,u)\equiv R$ be the only power series in $t$ with
constant term $0$ satisfying
\[
R= t+ u \sum_{n\ge 1} \frac 1 {n+1}{2n\choose n}{3n\choose n} R^{n+1},
\]
so that $R(t,-1)= \Rgf(t)$. Then $(R-t)/u$ counts balanced trees by the number of
white leaves, with an additional
weight $(u+1)$ per proper balanced subtree.
\end{Proposition}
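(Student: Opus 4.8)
The plan is to realise $R(t,u)$ through a contraction/substitution bijection on balanced trees that reproduces its defining equation directly. The first step is to read the coefficients combinatorially. Using the identity $\frac{2n+1}{n+1}\binom{2n}{n}=\binom{2n+1}{n}$ one rewrites
\[
a_n:=\frac1{n+1}\binom{2n}{n}\binom{3n}{n}=\binom{2n+1}{n}\cdot\frac1{2n+1}\binom{3n}{n}.
\]
Here $\frac1{2n+1}\binom{3n}{n}$ is the Fuss--Catalan number of rooted plane ternary tree \emph{shapes} with $n$ internal nodes (and therefore $2n+1$ leaves), while $\binom{2n+1}{n}$ counts the ways to colour exactly $n$ of the $2n+1$ leaves black (the remaining $n+1$ white). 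Hence $a_n$ is exactly the number of balanced trees with $n$ internal nodes, each of charge $(n+1)-n=1$ and carrying $n+1$ white leaves.

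Next I would introduce the series to be identified. For a balanced tree $T$ with at least one internal node, let $w(T)$ be its number of white leaves and $b(T)$ its number of proper balanced subtrees, where throughout \emph{balanced subtree} means one containing at least one internal node (a single white leaf, though of charge~$1$, is not counted, in agreement with the fact that $\sum_{n\ge1}a_nR^{n+1}$ starts at $n=1$). Set
\[
\widetilde B(t,u)=\sum_{T}t^{\,w(T)}(u+1)^{\,b(T)}.
\]
Expanding $(u+1)^{b(T)}=\sum_{S}u^{|S|}$ over subsets $S$ of these $b(T)$ subtrees, $\widetilde B$ becomes the generating function of pairs $(T,S)$ with weight $t^{w(T)}u^{|S|}$, where $T$ is balanced with $\ge1$ internal node and $S$ is a set of proper balanced subtrees of $T$.

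The key step is to decompose $(T,S)$ by the maximal elements of $S$. Since in a rooted tree any two subtrees are nested or disjoint, the maximal marked subtrees $U_1,\dots,U_k$ are pairwise disjoint; contracting each to a white leaf produces a balanced tree $T_0$ with, say, $n+1$ white leaves and $n\ge1$ internal nodes (the root of $T$ is internal and survives, as the $U_i$ are proper). Each white leaf of $T_0$ is either an original white leaf of $T$, contributing $t$, or a contracted $U_i$, which carries the factor $u$ (for $U_i\in S$) together with the induced pair $(U_i,\{V\in S:V\subsetneq U_i\})$, hence contributing $u\widetilde B$. As $T_0$ ranges over the $a_n$ balanced shapes-with-colourings with $n$ internal nodes, this yields
\[
\widetilde B=\sum_{n\ge1}a_n\,(t+u\widetilde B)^{\,n+1}.
\]
I would justify this as a bijection by exhibiting the inverse (substitute the $U_i$ back into the corresponding white leaves of $T_0$, and recover $S$ as the $U_i$ together with their internal marked subtrees) and by checking that the monomial $t^{w(T)}u^{|S|}$ factorises across the white leaves of $T_0$, since $w(T)=(\text{type-(a) leaves})+\sum_i w(U_i)$ and $|S|=\sum_i(1+|S_i|)$.

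Finally, put $R:=t+u\widetilde B$. The displayed identity becomes $R=t+u\sum_{n\ge1}a_nR^{n+1}$, which is precisely the defining equation of $R(t,u)$; as this equation has a unique formal solution with $R=t+O(t^2)$, we conclude $R(t,u)=t+u\widetilde B$, i.e. $(R-t)/u=\widetilde B$, establishing the general statement. Specialising to $u=-1$ annihilates every term with $b(T)\ge1$, so $t-\Rgf=(\Rgf-t)/(-1)$ counts exactly the balanced trees possessing no proper balanced subtree, which is the first assertion. The main obstacle is Step~3: verifying carefully that the contraction of maximal marked subtrees is a weight-preserving bijection, in particular that $T_0$ retains at least one internal node and that the reconstruction is unambiguous; the convention that single white leaves are not treated as balanced subtrees is essential here and should be checked against the low-order coefficients $t-\Rgf=3t^2+12t^3+\cdots$.
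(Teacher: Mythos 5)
Your proof is correct and follows essentially the same route as the paper's: the paper also expands the weight $(u+1)^{b}$ over sets of marked proper balanced subtrees (phrased via ``marked balanced trees'' whose root is marked), contracts/deletes the maximal marked subtrees to obtain a balanced core with $n$ internal nodes counted by $\frac1{n+1}\binom{2n}{n}\binom{3n}{n}$, and reattaches marked trees at the $n+1$ leaves to recover the defining equation of $R(t,u)$. Your explicit factorisation of the coefficient as (Fuss--Catalan shape count) $\times$ (leaf-colouring count) is exactly the verification the paper's argument relies on.
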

Here, a \emm subtree, of a tree $T$ consists of  a vertex of $T$ and all its
  descendants, and  is \emm proper, if the chosen vertex is neither the
  root of $T$ nor a leaf.
{This} proposition is illustrated in Figure~\ref{fig:R4}.

\begin{figure}[ht]
\setlength{\captionindent}{0pt}
     \scalebox{0.9}{\input{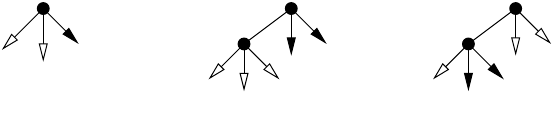_t}}
   \caption{The trees with 2 and 3 white leaves involved in the
     expansion of $t-\Rgf=3t^2+12t^3+O(t^4)$, for the series $\Rgf$ of
     Theorem~\ref{thm:4}. The
      multiplicities indicate the number of     embeddings in the plane.}
   \label{fig:R4}
\end{figure}

In the case of general Eulerian orientations (Theorem~\ref{thm:gen}), we also have a similar
combinatorial interpretation and generalization of  $t-\Rgf(t)$.

\begin{Proposition}
  Consider rooted plane binary trees  with edges of two colours (say
  solid and dashed, see Figure~\ref{fig:RG}). Define the \emm charge, of such a tree {to be the number of solid edges minus the number of dashed edges}. Call a tree of charge $0$ {\emm balanced,}. Then the series $t-\Rgf(t)$ of
  Theorem~\ref{thm:gen} counts, by leaves, balanced trees in which no proper subtree  is balanced.

More generally, let $R(t,u)\equiv R$ be the only power series in $t$ with
constant term $0$ satisfying
\beq\label{Ru}
R= t+ u \sum_{n\ge 1} \frac 1 {n+1}{2n\choose n}^2 R^{n+1},
\eeq
so that $R(t,-1)= \Rgf(t)$. Then $(R-t)/u$ counts balanced trees by
the number of leaves, with an additional
weight $(u+1)$ per proper balanced subtree.
\end{Proposition}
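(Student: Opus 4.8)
The plan is to prove both assertions by a single block-decomposition argument, exactly parallel to the ternary-tree case of the previous proposition, matching the two sides of the identity $(R-t)/u=\hat B(R)$, where $\hat B(x):=\sum_{n\ge1}\frac1{n+1}\binom{2n}{n}^2x^{n+1}$ is obtained from~\eqref{Ru} by isolating the $n=0$ term.

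First I would establish the basic enumeration. A rooted plane binary tree with $n$ internal nodes has $2n$ edges and $n+1$ leaves, and its shape is counted by the Catalan number $\frac1{n+1}\binom{2n}{n}$. A two-colouring of its edges is balanced (charge $0$) exactly when $n$ of the $2n$ edges are solid, which happens in $\binom{2n}{n}$ ways; hence balanced trees with $n+1$ leaves number $\frac1{n+1}\binom{2n}{n}^2$, and $\hat B(x)$ is their generating function by leaves, restricted to trees with at least one internal node. So it remains to show that $\hat B(R)$ equals the claimed weighted count of balanced trees.

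Next I would make precise the substitution structure behind $R=t+u\hat B(R)$. For a vertex $v$ of a balanced tree $T$, let $\chi(v)$ denote the charge of the subtree rooted at $v$; charge is additive over edges, so grafting balanced trees at leaves preserves balance, and the proper balanced subtrees of $T$ are exactly those rooted at non-root internal vertices $v$ with $\chi(v)=0$, their number being $b(T)$. Reading $R=t+u\hat B(R)$ recursively, $R$ describes objects built by repeatedly grafting balanced blocks, each having at least one internal node and each carrying a weight $u$ except for the topmost one, at leaves; the result is always a balanced tree $T$ with at least one internal node, counted by $t^{\ell(T)}$ where $\ell(T)$ is its number of leaves.

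The crux --- and the step I expect to demand the most care --- is to identify the block-decompositions of a fixed balanced tree $T$ with the subsets $S$ of its proper balanced subtrees, a decomposition with non-root blocks rooted at $S$ carrying the weight $u^{|S|}$. In one direction, cutting $T$ at any set $S$ of non-root internal charge-$0$ vertices yields blocks of charge $\chi(v)-\sum_i\chi(w_i)=0$, where $v$ is a block root and the $w_i\in S$ are its nearest descendants in $S$, each block having its root internal; conversely every block-decomposition arises this way. The delicate points are that additivity of charge guarantees all blocks remain balanced for an arbitrary such $S$, and that one must restrict $S$ to internal vertices --- leaves also satisfy $\chi=0$ but must be excluded, which is precisely why ``proper'' means non-root and non-leaf. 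Summing $u^{|S|}$ over all subsets $S$ gives $(u+1)^{b(T)}$, so $\hat B(R)=\sum_T t^{\ell(T)}(u+1)^{b(T)}$ over balanced $T$ with at least one internal node. Specializing $u=-1$ turns $(u+1)^{b(T)}$ into the indicator that $T$ has no proper balanced subtree, which yields the stated interpretation of $t-\Rgf$.
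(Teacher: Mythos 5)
Your proposal is correct and follows essentially the same route as the paper: the paper encodes the block decomposition as ``marked balanced trees'' (the root plus a subset of proper balanced subtree roots being marked, each mark weighted $u$), which is exactly your cut-set $S$ reformulation, and both arguments rest on the same count $\frac1{n+1}\binom{2n}{n}^2$ for a balanced block and the same additivity-of-charge observation yielding the factor $(u+1)^{b(T)}$. The only quibble is a slip of phrasing: in $R$ itself \emph{every} block (including the topmost) carries the weight $u$; it is $(R-t)/u=\hat B(R)$ in which the topmost block loses its $u$ --- but this does not affect the argument.
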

This proposition is illustrated in Figure~\ref{fig:RG}.

\begin{figure}[ht]
\setlength{\captionindent}{0pt}
     \scalebox{0.9}{\input{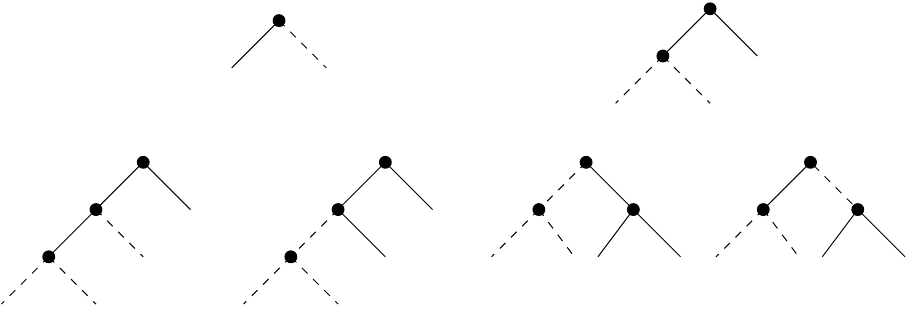_t}}
   \caption{The trees with at most 4  leaves involved in the
     expansion of $t-\Rgf=2t^2+4t^3+20t^4+O(t^5)$, for the series $\Rgf$ of
     Theorem~\ref{thm:gen}. The  multiplicities takes into account the
     number of embeddings in the plane and the exchange of the two
     colours.}
   \label{fig:RG}
\end{figure}
\begin{proof}
Define a \emm marked, balanced tree as a balanced tree in which a
number of inner vertices are marked, in such a way that:
\begin{itemize}
\item the root vertex is marked (unless the tree consists of a single
  vertex)
 \item the subtree attached at any marked vertex is balanced.
\end{itemize}
  Let $\bR(t,u)$ be the \gf\ of marked balanced trees  with a weight $t$ per
  leaf and a weight $u$ per marked vertex.  We claim that $\bR$
  satisfies~\eqref{Ru}. Indeed, take a marked balanced tree
  with at least one edge, and consider the tree obtained by deleting all
  subtrees attached to a (non-root) marked vertex. Then this tree must
  be balanced. If it has $n$ inner vertices, it can be chosen and
  coloured in
\[ \frac 1 {n+1}{2n \choose n} {2n \choose n} 
\]
ways: the Catalan number accounts for the choice of the tree, and the
second binomial coefficient for the colouring of its $2n$ edges. To
reconstruct the marked balanced tree, we now need to attach to each of
the $n+1$ leaves a marked balanced tree, and this
gives~\eqref{Ru}. Hence the series $\bR(t,u)$ coincides with $R(t,u)$.

Now consider a balanced tree. The total weight of all marked trees
that can be constructed from it by marking certain vertices is
$u(u+1)^{b}$, where $b$ is the number of proper balanced
subtrees. This completes the proof.
\end{proof}
The problem of understanding {these equidistributions bijectively} is
wide open. Let us mention that  deep connections are known to
  exist between certain families of
  orientations (e.g., acyclic) of a graph and certain families of subgraphs (e.g., spanning
  forests) of the same graph (see~\cite{bernardi-tutte} for a survey, and references therein).

\medskip
Let us finish with another bijective question. There exist two
  main bijections that transform Eulerian maps into trees: one of them
  takes the dual bipartite map,
  and transforms it into a mobile-tree using the distance labelling
  of the vertices~\cite{bouttier-mobiles}. This is the Bouttier--Di-Francesco--Guitter
  bijection that we have generalised in Section~\ref{sec:bij} to more
  general labellings, and thus to Eulerian \emm orientations, (rather
  than Eulerian maps). The second classical bijection, due to
  Schaeffer~\cite{Sch97}, transforms Eulerian maps into \emm blossoming
  trees,. Underlying this construction is a canonical Eulerian
  orientation of the map. Is there an extension of this bijection to
  all Eulerian orientations?

\subsection{Interpolating between quartic Eulerian orientations and
  general Eulerian orientations}
\label{sec:int}

Given that the form of our solution for general Eulerian orientations
is so similar to that for quartic Eulerian orientations, one may
wonder whether these are two special cases of a more general
series. In a forthcoming paper we describe two possible ways to
simultaneously generalise  $\Ggf(t)$ and $\Qgf(t)$.  The first series 
{that} we
consider counts general Eulerian orientations by edges and vertices.
This is an obvious generalisation of $\Ggf(t)$, which
only records the number of edges. Moreover,  $\Qgf(t)$
can be extracted from this refined \gf\  by directly utilising the fact
that quartic Eulerian orientations form a subclass of general Eulerian
orientations (those having, in a sense, many vertices). The second generalisation  concerns
labelled quadrangulations, and interpolates between the series
$\Qgf(t)$ and $\Qc(t)=2\Ggf(t)$ by keeping track of the number of quadrangles
that {only contain two labels} (such quadrangles are forbidden in
colourful quadrangulations). This corresponds to the six vertex model
discussed in Section~\ref{sec:ice}.

\bigskip
\noindent {\bf Acknowledgements.} We are grateful to Tony Guttmann for
putting the authors in contact with each other and organising AEP's
visit to the University of Bordeaux in June 2017.  We also thank him
for useful comments on the manuscript.  The authors acknowledge many
interesting discussions with Nicolas Bonichon about Eulerian orientations. {We would like to thank Paul Zinn-Justin for helping us understand Kostov's solution to the six vertex model on a random lattice. We are grateful to J\'er\'emie Bouttier for pointing us to several articles relating to the bijection in Section~\ref{sec:bij}. Finally, we thank the anonymous referees for their helpful comments that have improved the paper.}


\bibliographystyle{plain}
\bibliography{oe}
\end{document}